\newcommand{\Mod}[1]{\ (\mathrm{mod}\ #1)}
\def\@fnsymbol#1{\ensuremath{\ifcase#1\or *\or \dagger\or \ddagger\or
		\mathsection\or \mathparagraph\or \|\or **\or \dagger\dagger
		\or \ddagger\ddagger \else\@ctrerr\fi}}
\newcommand{\ssymbol}[1]{^{\@fnsymbol{#1}}}
\DeclareMathOperator{\cir}{Circ}
\DeclareMathOperator{\In}{In}
\numberwithin{equation}{section}
\theoremstyle{plain}
\newtheorem{theorem}{Theorem}[section]
\newtheorem{lemma}[theorem]{Lemma}
\newtheorem{remark}[theorem]{Remark}
\newtheorem{proposition}[theorem]{Proposition}
\newtheorem{definition}[theorem]{Definition}
\newtheorem{example}[theorem]{Example}
\tikzstyle{vertex}=[circle,draw, inner sep=0pt, minimum size=4pt] 
\title{On Eccentricity Matrices of Wheel Graphs}
\author{I. Jeyaraman\textsuperscript{1} and T. Divyadevi\textsuperscript{2} \\\\
	Department of Mathematics\\
	National Institute of 
	Technology		Tiruchirappalli-620 015,  India\\ (e-mail: 
jeyaraman@nitt.edu\textsuperscript{1} and 	
tdivyadevi@gmail.com\textsuperscript{2})} 
\date{}
\begin{document}
\maketitle

\begin{abstract}
	The eccentricity matrix $E(G)$ of a simple connected graph 
$G$ is obtained from the distance matrix 	$D(G)$ of $G$ 
by retaining the largest distance in each row and column, and by defining 
the remaining entries to be zero. This paper focuses on the 
eccentricity 
matrix $E(W_n)$ of the wheel graph $W_n$  with $n$ 
vertices. By establishing a  formula for the determinant of $E(W_n)$, we 
show that $E(W_n)$ is invertible if and only if  $n \not\equiv 
1\Mod3$. We derive a formula for the inverse of $E(W_n)$ by finding a 
vector $\mathbf{w}\in 
\mathbb{R}^n$ and an $n \times n$ symmetric Laplacian-like matrix 
$\widetilde{L}$ 
of rank $n-1$ such that 
\begin{eqnarray*}
	E(W_n)^{-1} =	-\frac{1}{2}\widetilde{L} +
	\frac{6}{n-1}\mathbf{w}\mathbf{w^{\prime}}.
\end{eqnarray*}
Further, we prove an analogous result for the Moore-Penrose inverse of 
$E(W_n)$ 
for the singular case. We also determine the inertia of $E(W_n)$.
\end{abstract}

\textbf{Keywords}:	Wheel graph; Eccentricity matrix;
Inverse; Moore-Penrose inverse.\\

\textbf{Subject Classification (2020): 05C12; 05C50; 15A09. }
\section{Introduction}

	Let $G$ be a simple connected  graph 
on $n$ vertices with the vertex set $\{v_1,v_2,\cdots,v_n\}$. Corresponding 
to $G$, several (graph) matrices have been introduced and studied, namely, 
the incidence matrix, the adjacency matrix, the Laplacian matrix, the 
distance matrix, the resistance matrix etc., see \cite{Bapat, Graph spectra 
	book London ms}. We recall the adjacency and distance matrices which are 
relevant to the discussion here. The \textit{adjacency matrix} 
$A(G):=(a_{ij})$ of $G$ is an $n \times n$ matrix with $a_{ij}=1$ if
$v_i$ and $v_j$ are adjacent and zero elsewhere. Let $d(v_i,v_j)$ be the 
length of a shortest path between $v_i$ and $v_j$. The \textit{distance 
	matrix}  of $G$, denoted by $D(G):=(d_{ij})$, is an $n \times n$ matrix 
with 
$d_{ij}=d(v_i,v_j)$  for all $i$ and $j$. Then $A(G)$ and $D(G)$ are 
symmetric matrices with diagonal entries all equal to zero. These matrices 
have been well 
studied in the literature and  have a wide 
range of applications in chemistry, physics, computer science, etc., see  
\cite{Bapat, Brouwer, Graph spectra book London ms} and the references 
therein. Note that the 	adjacency matrix can be derived 
from the 
distance matrix by keeping the smallest non-zero distance (which is equal 	
to one) in each row and column and by setting the 	remaining entries 
to be 
zero.

Randi\'{c} \cite{Randic} introduced a new graph matrix known as 
$D_{\text{MAX}}$ which is obtained from the distance 
matrix $D(G)$ by retaining the largest distance in each row and each 
column of $D(G)$, 
and setting the rest of the entries to zero.
Wang 
et al. \cite{Wang1} introduced the $D_{\text{MAX}}$ matrix in a 
slightly different form using the notion of the eccentricity of a vertex 
and called it  the eccentricity matrix $E(G)$.
The \textit{eccentricity} of a vertex $v_i$, denoted by $e(v_i)$, is defined 
by
$e(v_i)$= max $\{d(v_i,v_j): 1\leq j \leq n\}$.
Let us recall the equivalent formulation 
of $D_{\text{MAX}}$ given in 
\cite{Wang1}. The \textit{eccentricity matrix} of $G$ is an $n \times n$ 
matrix with
\begin{eqnarray*}\label{Eccentricty matrix defintion}
	(E (G))_{ij}= \begin{cases}  d(v_i,v_j)   & \text{if } 
		d(v_i,v_j)=\text{min}\{e(v_i),e(v_j)\},\\
		0 & \text{otherwise},
	\end{cases}
\end{eqnarray*}
where $(E (G))_{ij}$ is the $(i,j)$-th entry of $E(G)$. It is 
also
referred to as the anti-adjacency matrix \cite{Wang1}. Note that $E(G)$ is a 
real symmetric matrix whose entries are non-negative. This new matrix $E(G)$ 
is 
applied to study the boiling point of  hydrocarbons \cite{Wang3}. For more 
details 
on the applications of $E(G)$ in terms of molecular descriptor, we refer to	
\cite{Wang1, Wang3, Randic}. Motivated by the 
concepts and results of other graph matrices, 
several 
spectral properties have been studied for the eccentricity matrix in 
\cite{Wang1,Wang2, Wang3, rajes1, Rajesh, Randic}. The 
relation between 
the eigenvalues of 
$A(G)$ and $E(G)$ has been investigated for certain graphs in 		
\cite{Wang1}. 

Unlike the adjacency and distance matrices of a connected 
graph, the eccentricity matrix is not irreducible \cite{Wang1}. The 
problem of characterizing the irreducible eccentricity matrix posed by 
Wang et al. \cite{Wang1} remains open. They showed 
that the
eccentricity matrix of a tree is irreducible.  Mahato et al. 
\cite{rajes1} gave an alternative proof. In \cite{Wang2}, this result 
was generalized by providing a class of graphs whose eccentricity 
matrices are irreducible. In this paper, we study the eccentricity 
matrix of the wheel graph $W_n$ with $n$ vertices. We show that 
$E(W_n)$ is irreducible.

An important problem in graph matrices is to find the determinants and inertias 
of these matrix classes. A 	famous result in the theory of distance matrix 
states that the determinant 
of a tree $T$ with $n$ vertices is $(-1)^{n-1}(n-1)2^{n-2}$ 
which depends only on the number of vertices \cite{Graham and Pollak}. The 
inertia of 
$D(T)$ was 	also found in \cite{Graham and Pollak}. These results were 
extended to the 
distance matrix of a weighted tree by Bapat et al. \cite{Bapat2}. 
The determinant and inertia of $D(W_n)$ have been computed in \cite{Wheel 
	related graph} and  
the inertias of the eccentricity matrices of certain graphs 
have been investigated in 
\cite{rajes1}. In this paper, we determine a formula to find the 
determinant of $E(W_n)$ and derive the inertia of $E(W_n)$. We show that 
the determinant of $E(W_n)$ is 	$2^{n-2}(1-n)$ if $n \not\equiv1\Mod3$,
which depends only on the number of vertices, and zero if $n\equiv1\Mod3$, 
see Theorem \ref{wheel determinant}. 

The problem of finding the {inverses} of different graph 
matrices has 
been extensively studied in the literature, see \cite{Balaji,Bapat,Bapat 
	Inverse of distance block graph, Graham, helm graph, Zhou}. 
In 
order to motivate 
our next result, we recall the Laplacian matrix $L$ of $G$. The 
\textit{Laplacian matrix} of $G$ is given by $L:=\bar{D}-A(G)$, where 
$\bar{D}$ is the diagonal matrix whose $i$-th  
diagonal 
entry is deg($v_i$), the degree of the vertex $v_i$.  Then $L$ is an $n 
\times n$ symmetric matrix whose row sums are zero and rank of $L$ is 
$n-1$, see \cite{Bapat}.
Graham and Lov\'{a}sz \cite{Graham} gave an interesting formula for the 
inverse of the distance matrix $D(T)$ of a tree $T$ which is given by 
\begin{eqnarray}\label{Lovaz}
	D(T)^{-1}=-\frac{1}{2}L+\frac{1}{2(n-1)}\bm{\tau\tau^{\prime}},
\end{eqnarray}
where $\bm{\tau}^{\prime}=\big(2-\text{deg}(v_1), 2-\text{deg}(v_2), \cdots, 
2-\text{deg}(v_n)\big)$. This result was extended to the distance 
matrix of a weighted tree in \cite{Bapat2} and to the resistance matrix of $G$, 
see \cite{Bapat}.

Let us recall that a real square matrix $\widetilde{L}$ is called a 
Laplacian-like matrix \cite{Zhou} if $\widetilde{L}\mathbf{	e}=0$ and 
$\mathbf{{e}^{\prime}}\widetilde{L}=0$, where $\mathbf{	e}$ denotes the column 
vector whose entries are all one. Inspired by the result of 
Graham and Lov\'{a}sz \cite{Graham}, {similar inverse formulae 
	have been 
	obtained for the distance matrices} of wheel 
graph $W_n$ when $n$ is even
\cite{Balaji},  helm graphs \cite{helm graph}, cycles 
\cite{Bapat2}, block graphs \cite{Bapat Inverse of 
	distance block graph} and complete 
graphs \cite{Zhou}. For all these matrices, the inverse formula is 
expressed as the 
sum of a Laplacian-like matrix and a rank one matrix. Almost all the distance 
matrices of the above mentioned graphs are Euclidean distance matrix (EDM), see 
Section $2.3$ for the definition. The inverse formula has been studied for EDM 
in \cite{Balaji EDM} where the result of  Graham and Lov\'{a}sz 
\cite{Graham} was generalized. It would be interesting and challenging to study 
the inverse formula for non-EDM. Note that $E(W_n)$ is not an EDM (see Section 
$2.3$). Therefore studying $E(W_n)$ will give new ideas and results in the 
class of irreducible non-EDM. Motivated by the inverse formulae of the distance 
matrices of several graphs, we find an inverse formula for $E(W_n)$ when $n 
\not\equiv1\Mod3$ which is of 
the form (\ref{Lovaz}), see Theorem \ref{inverse 
	formula n is even 0 mod}.

Let us turn our attention to the Moore-Penrose (generalized) inverse of graph 
matrices. Let $A$ be an $m \times n$ real matrix. An $n \times m$ matrix $X$ 
is 
called the \textit{Moore-Penrose inverse} of $A$ if
$AXA=A,~ XAX=X$,~ $(AX)^{\prime}=AX$ and $(XA)^{\prime}=XA$. It is known that 
Moore–Penrose inverse, denoted by 
$A\ssymbol{2}$,  exists and is unique. Further, if $A$ is a non-singular 
matrix, 
then $A\ssymbol{2}$ coincides with the usual inverse 
$A^{-1}$. For more details, we refer to
\cite{Moore inverse book}. 

The Moore-Penrose inverse has been presented for the incidence matrices of 
complete multipartite graph, bi-block graph, distance regular graph, tree 
etc., and for EDM, see \cite{MP inverse, Bapat, Balaji EDM} and references 
therein. In a 
recent paper,
Balaji et al. 
\cite{Balaji odd 
	wheel graph} provided a formula 
to compute the Moore-Penrose inverse of the
distance matrix of a wheel graph with an odd number of vertices, which is 
given by
\begin{equation}\label{Balaji MP formula}
	D(W_n)\ssymbol{2}=
	-\frac{1}{2}{\bar{L}}+\frac{4}{n-1}\mathbf{w}\mathbf{w^{\prime}},
\end{equation}
where $\bar{L}$ is a real symmetric Laplacian-like matrix of order $n$ and 
$\mathbf{w} \in 
\mathbb{R}^{n}$. 
It is natural 
to ask whether an analogous result can be studied for the eccentricity matrix 
of $W_n$ for the singular case. Answering this question positively, in this 
paper, we 
obtain a  formula for  $E(W_n)\ssymbol{2}$ when $n 
\equiv1\Mod3$ in the form 
of $(\ref{Lovaz})$ by introducing a symmetric Laplacian-like matrix 
$\widetilde{L}$ (Theorem \ref{MP formula}).

The article is organized as follows. In Section $2$, we fix the 
notations, collect some known results and study some 
properties of the eccentricity matrix of the wheel graph $E(W_n)$. In 
Section 3, we establish a formula to compute the determinant of $E(W_n)$. 
As a  consequence, we show that $E(W_n)$ is invertible if and only if $n 
\not\equiv1\Mod3$.  Section 4 deals with the inertia 
of 	$E(W_n)$. As a by-product, we obtain the determinant and the 
inertia of the 
eccentricity matrix of an  edge-deleted subgraph of $W_n$ where the edge 
lies 
on the cycle. In  Section 5, we obtain an explicit formula similar to 
(\ref{Lovaz}) for the inverse of $E(W_n)$ when $n \not\equiv1\Mod3$. 
The inverse formula is expressed as the sum of a symmetric Laplacian-like 
matrix $\widetilde{L}$ and a rank one matrix which is  given by 
	$	E(W_n)^{-1}=
	-\frac{1}{2}\widetilde{L}+\frac{6}{n-1}\mathbf{w}\mathbf{w^{\prime}},$	
where $\mathbf{w}=\frac{1}{6}(7-n,1,\cdots,1)^{\prime} \in \mathbb{R}^{n}$ 
and the rank of  
$\widetilde{L}$ is one less than that of $E(W_n)$. 
In Section 6, 
an analogous result is presented for the 
Moore-Penrose inverse of $E(W_n)$ when $n \equiv1\Mod3$ by constructing a 
symmetric Laplacian-like matrix $\hat{L}$ of rank $n-3$.

	\section{Preliminaries and  Properties of $E(W_n)$}\label{sec 2}
In this section, we first introduce a few notations and present some 
preliminary results on circulant matrices that are needed in the paper. 	
Next, we compute the eccentricity matrix of the wheel graph and
study some of its properties. We conclude this section by finding 
its spectral radius.

We assume that all the vectors are column vectors  and are denoted by 
lowercase boldface letters. {As usual $I_n$ is the identity 
	matrix of order $n$ and 
	$\mathbf{e_i}$ is the vector in $\mathbb{R}^n$ with the
	$i$-th coordinate $1$ and $0$ elsewhere. We write 
	$\mathbf{e}_{n \times 
		1}$ ($\mathbf{0}_{n \times 1}$) to represent the vector in 
		$\mathbb{R}^n$ 
	whose coordinates are all one (respectively, zero)}. We use the notation 
$J_{n \times m} (0_{n \times m})$ to denote the matrix with 
all elements equal to $1$ (respectively, $0$) and we simply  write
$J_n$ (respectively, $0_n)$ if $n=m$. 
We omit the subscript if the {orders} of 
the vector and the matrix are clear from the context. For 
the matrix $A$, 
we denote the	
transpose of $A$, the null space of $A$, the range of $A$, $i$-th row of 
$A$ and  $j$-th column 
of $A$ by $A^{\prime}$, $N(A)$, $R(A)$,  $A_{i*}$ and $A_{*j}$, 
respectively. 
The notation $A>0 $ ($A \geq 0$) means that all the 
entries of $A$ are positive (respectively, non-negative). The 
determinant of a square matrix $A$ is denoted by $\det(A)$.

For a vector $\mathbf{c} =(c_1, \cdots, c_{n})^{\prime}$ in 
$\mathbb{R}^{n}$, the 
notation 
$\cir(\mathbf{c^{\prime}})$ stands for the circulant matrix of 
order 
$n$, and $T_n{(a,b,c)}$ 
denotes the 
tridiagonal matrix of order $n$ which are given by 
\begin{equation*}\label{C}
	\cir(\mathbf{c}^{\prime}) = 
	\begin{bmatrix}
		c_1&c_2&c_3&\cdots&c_{n-1}&c_{n}\\
		c_{n}&c_1&c_2&\cdots&c_{n-2}&c_{n-1}\\
		c_{n-1}&c_{n}&c_1&\cdots&c_{n-3}&c_{n-2}\\
		\vdots&\vdots&&\ddots&\vdots&\vdots\\
		c_3&c_4&c_5&\cdots&c_1&c_2\\
		c_2&c_3&c_4&\cdots&c_{n}&c_1
	\end{bmatrix} ~\text{and}~ T_n{(a,b,c)} =  \begin{bmatrix}
		a& b& 0&  \cdots & 0& 0& 0\\
		c& a& b&  & 0& 0& 0 \\
		0& c& a&  & 0& 0& 0\\
		\vdots&& \ddots&\ddots&\ddots&  \\
		0& 	&&&a&b& 0\\
		0&\cdots&&	&c&a& b\\
		0&\cdots&&	&0&c&a
	\end{bmatrix},
\end{equation*}
respectively where $a,b$ and $c$ are real numbers. The following properties of 
the circulant matrix will be used several times. We refer to the books 
\cite{Horn, Zhang} for more details.
\vspace{-.5cm}
\begin{align}\notag
	\intertext{Let $\mathbf{x,y} \in \mathbb{R}^n$. Suppose that	
		$A=\cir(\mathbf{x^{\prime}})$ and $B=\cir(\mathbf{y^{\prime}})$. 
		Then} \label{commutative property}
	AB&= BA,\\\label{sum of circulant property}
	\cir (a \mathbf{x^{\prime}}+ b \mathbf{y^{\prime}}) &=aA+bB, 
	\intertext{and}\label{product of circulant property}
	AB&=\cir(\mathbf{x^{\prime}}B).
	\intertext{ Let	$T:\mathbb{R}^{n}\rightarrow \mathbb{R}^{n}$ 
		be defined by}\label{right shift operator}
	T\big((f_1,f_2,\cdots,f_{n})^{\prime}\big) &= 
	(f_{n},f_1,\cdots,f_{n-1})^{\prime}.
\end{align} 

\begin{lemma}\label{Image of a vector which  has repeated components}
	Let $T$ be the operator defined in $(\ref{right shift 
		operator})$. Let $n \equiv 0 
	\Mod{3}$ and 
	$C:=\cir(\mathbf{c}^{\prime})$. If 
	$\mathbf{g}^{\prime}=(\underbrace{\alpha,\beta,\gamma},~\underbrace{\alpha,\beta,\gamma},~\cdots,
	~\underbrace{\alpha,\beta,\gamma}) \in \mathbb{R}^{n}$, then
	$ \mathbf{g}^{\prime}C= 
	(\underbrace{\tau_1,\tau_2,\tau_3,}~ 
	\underbrace{\tau_1,\tau_2,\tau_3},~\cdots,~
	\underbrace{\tau_1,\tau_2,\tau_3}), $
	where 
	$ \tau_1=\mathbf{g}^{\prime}C_{*1}, ~
	\tau_2=\big(T^2(\mathbf{g})\big)^{\prime}C_{*1}$~ and 
	$ \tau_3=\big(T(\mathbf{g})\big)^{\prime}C_{*1}$. 	
\end{lemma}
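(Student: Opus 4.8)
The plan is to compute the $j$-th entry of the row vector $\mathbf{g}'C$ directly, namely $(\mathbf{g}'C)_j = \mathbf{g}'C_{*j}$, and to show that this value depends only on the residue of $j$ modulo $3$. The two structural ingredients are the cyclic relation among the columns of a circulant matrix and the periodicity of $\mathbf{g}$ forced by $n \equiv 0 \Mod{3}$.

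First I would record that, since $C = \cir(\mathbf{c}^{\prime})$, its successive columns are obtained from the first one by the right shift $T$ of $(\ref{right shift operator})$; that is, $C_{*j} = T^{j-1}(C_{*1})$ for $1 \le j \le n$, which is immediate from the displayed form of $\cir(\mathbf{c}^{\prime})$ and is precisely the reason $T$ was introduced. Hence $(\mathbf{g}'C)_j = \mathbf{g}'\,T^{j-1}(C_{*1})$. Next, using that $T$ is an orthogonal (permutation) operator, so that its transpose equals $T^{-1}$ and $T^n$ is the identity, I would transfer the shift off the column $C_{*1}$ and onto $\mathbf{g}$, obtaining $(\mathbf{g}'C)_j = \big(T^{-(j-1)}(\mathbf{g})\big)'C_{*1} = \big(T^{\,n-(j-1)}(\mathbf{g})\big)'C_{*1}$.

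The periodic form of $\mathbf{g}$ together with $n \equiv 0 \Mod{3}$ gives $T^3(\mathbf{g}) = \mathbf{g}$, so the exponent $n-(j-1)$ may be reduced modulo $3$ when acting on $\mathbf{g}$. Evaluating the cases $j \equiv 1,\,2,\,0 \Mod{3}$ then yields exactly $\tau_1 = \mathbf{g}'C_{*1}$, $\tau_2 = \big(T^2(\mathbf{g})\big)'C_{*1}$ and $\tau_3 = \big(T(\mathbf{g})\big)'C_{*1}$, respectively; the same reduction shows $(\mathbf{g}'C)_{j+3} = (\mathbf{g}'C)_j$, which is the asserted block-periodic pattern $(\tau_1,\tau_2,\tau_3,\tau_1,\tau_2,\tau_3,\cdots)$.

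The only delicate point is the index bookkeeping: one must keep careful track of whether the columns are shifted by $T$ or by $T^{-1}$ and then reduce the resulting exponent modulo $3$ correctly, since this is what produces the (at first sight surprising) pairing of $\tau_2$ with $T^2(\mathbf{g})$ and of $\tau_3$ with $T(\mathbf{g})$ rather than the reverse. A single direct check in the base case $n=3$ confirms the assignment, after which everything is a routine verification.
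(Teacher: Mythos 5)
Your proof is correct, but it takes a genuinely different route from the paper's. The paper argues entrywise and computationally: it writes out $\mathbf{g}^{\prime}C_{*k}$ by direct matrix multiplication, groups the terms by the coefficients $\alpha,\beta,\gamma$, and observes that each group is a sum of the $c_i$ over a fixed residue class modulo $3$ (for instance, for $k\equiv1\Mod3$ one gets $\mathbf{g}^{\prime}C_{*k}=\alpha \big(\sum_{i\equiv1\Mod3}c_i\big)+ \beta \big(\sum_{i\equiv0\Mod3}c_{i}\big)+ \gamma\big(\sum_{i\equiv2\Mod3}c_{i}\big)$), so the value depends only on $k\Mod3$; the cases $k\equiv2\Mod3$ and $k\equiv0\Mod3$ are declared similar. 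You instead argue structurally: the column-shift identity $C_{*j}=T^{j-1}(C_{*1})$, the fact that $T$ is a permutation operator (so $T^{\prime}=T^{-1}$ and $T^{n}=I$) to transfer the shift from the column onto $\mathbf{g}$, and the periodicity $T^{3}(\mathbf{g})=\mathbf{g}$ (which is where $3\mid n$ enters) to reduce exponents modulo $3$. Your case analysis is right and reproduces exactly the paper's pairing: for $j\equiv2\Mod3$ the exponent $n-(j-1)$ is $\equiv2\Mod3$, giving $\tau_2=\big(T^2(\mathbf{g})\big)^{\prime}C_{*1}$, while for $j\equiv0\Mod3$ it is $\equiv1\Mod3$, giving $\tau_3=\big(T(\mathbf{g})\big)^{\prime}C_{*1}$. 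What the paper's computation buys is explicitness: one sees concretely which residue-class sums of the $c_i$ the quantities $\tau_1,\tau_2,\tau_3$ are, at the cost of wrap-around index bookkeeping. What your argument buys is economy and generality: it treats all three residue cases uniformly (nothing is left as ``similarly''), it makes the otherwise surprising pairing of $\tau_2$ with $T^2$ and $\tau_3$ with $T$ transparent as an artifact of inverting the shift, and it extends verbatim to vectors of any period $d$ dividing $n$. The only ingredients you use beyond the paper's are the column-shift identity and the orthogonality of $T$, both of which are immediate from the displayed form of $\cir(\mathbf{c}^{\prime})$, so there is no hidden gap.
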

\begin{proof}
	Let $1 \leq k \leq n$. Note that $C_{*k}=(c_k,c_{k-1}, 
	\cdots, c_2,c_1, 
	c_n,c_{n-1}, \cdots, c_{k+1})^{\prime}$ and the $k$-th coordinate of 
	$\mathbf{g}^{\prime}C$ is $ 
	\mathbf{g}^{\prime}C_{*k}$. By the usual matrix 
	multiplication, we see that
	\begin{equation*}
		\mathbf{g}^{\prime}C_{*k}=
		\alpha(c_k+c_{k-3}+\cdots+c_{k+3})+\beta(c_{k-1}+c_{k-4}+\cdots+c_{k+2})+
		\gamma(c_{k-2}+c_{k-5}+\cdots+c_{k+1}).
	\end{equation*}  
	If  $k\equiv1\Mod3$, then $ \mathbf{g}^{\prime}C_{*k}=\alpha 
	\big (\sum_{i\equiv1\Mod3}c_i\big)+ \beta 
	\big(\sum_{i\equiv0\Mod3}c_{i}\big)+   	
	\gamma\big(\sum_{i\equiv2\Mod3}c_{i}\big)$.
	That is, $\mathbf{g}^{\prime}C_{*1}= \mathbf{g}^{\prime}C_{*k}$ for all 
	${k\equiv1\Mod3}$. The remaining two cases ${k\equiv2\Mod3}$  and 
	${k\equiv0\Mod3}$ can be be proved similarly.
\end{proof}

\begin{figure}
	\begin{center}
		\begin{tikzpicture}[scale=1]
			\draw[fill=black] (0,0) circle (2pt);\node at (0,-0.3) {$ 
				v_1 $};
			\draw[fill=black] (1.8,0) circle (2pt);
			\draw[fill=black] (-1.8,0) circle (2pt);
			\draw[fill=black] (1,1) circle (2pt);
			\draw[fill=black] (1,-1) circle (2pt);
			\draw[fill=black] (-1,1) circle (2pt);
			\draw[fill=black] (-1,-1) circle (2pt);
			\draw[thin] 
			(0,0)--(1.8,0)--(1,1)--(-1,1)--(-1,1)--(-1.8,0)--(-1,-1)--(1,-1)--(1.8,0);
			\draw[thin] (0,0)--(-1.8,0);
			\draw[thin] (0,0)--(-1,1);
			\draw[thin] (0,0)--(-1,-1);
			\draw[thin] (0,0)--(1,1);
			\draw[thin] (0,0)--(1,-1);
			\node at (1,1.3) {$ v_2 $};
			\node at (1,-1.3) {$ v_4 $};
			\node at (-1,-1.3) {$ v_5 $};
			\node at (-1,1.3) {$ v_7 $};
			\node at (2.1,0) {$ v_3 $};
			\node at (-2.1,0) {$ v_6 $};
		\end{tikzpicture}
	\end{center}
	\caption{Wheel graph on seven vertices}\label{wheelgraph}
\end{figure}
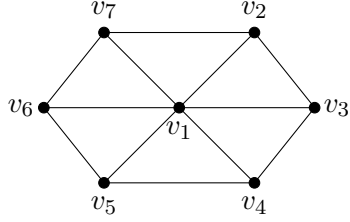
\subsection{  Eccentricity matrix of Wheel Graph}
For $n\geq 4$, the wheel graph on $n$ vertices $W_n$ is a graph 
containing a
cycle of length $n-1$ and a vertex  (called the hub) not in the 
cycle which is adjacent to every other vertex in the cycle. Throughout this 
paper, we label the 
vertices of $W_n$ by $v_1,v_2, \cdots v_n$, where the hub of $W_n$ is 
labelled as		$v_1$ and the vertices in the cycle are labelled as 
$v_2, \cdots 
v_n$ clockwise (see figure \ref{wheelgraph} for $W_7$). 

We observe that $D(W_4)=E(W_4)$. We assume that 
$n\geq 5$. Since $v_1$ is adjacent to all other $v_i'$s in $W_n$, we 
have $e(v_1)=1$. Note that all $v_i'$s ($i \neq 1$) are adjacent to 
exactly three vertices. Therefore there exists a vertex $v_j$ such that 
$d(v_i,v_j)=2$, where the existence of $v_j$ follows from 
the assumption $n\geq5$. Hence, $e(v_i)=2$ for $i=2,3,\cdots,n$. Let 
$\mathbf{d}=(0,1, 2,2,\cdots,2,1)^{\prime}$ and 		
$\mathbf{u}=(0,0,2,2,\cdots,2,0)^{\prime}$ be the vectors fixed in  
$\mathbb{R}^{n-1}$. 
Then 
the
distance matrix $	
D(W_n)$ and the 
eccentricity matrix $E(W_n)$ of the wheel graph are $n \times n$ 
symmetric matrices which can be 
written in the 
block form 
\begin{equation}\label{eccentricity matrix of wheel graph}
	D(W_n) = \begin{bmatrix}
		0 &\mathbf{e^{\prime}}\\[3pt]
		\mathbf{e}&\widetilde{D}\\
	\end{bmatrix} ~\text{and} ~~~ 	E(W_n) = \begin{bmatrix}
		0 &\mathbf{e^{\prime}}\\[3pt]
		\mathbf{e}& \widetilde{E}\\
	\end{bmatrix},
\end{equation}
respectively, where $\widetilde{D}= \cir(\mathbf{d}^{\prime}$) and 
$\widetilde{E}=\cir(\mathbf{u^{\prime}}$) are circulant matrices of order 
$n-1$. 

\subsection{The irreducible property of $E(W_n)$}	
Let us recall that 
a square matrix $A$ of order $n$ is said to be \textit{irreducible} if 
there is no
permutation matrix $P$ of order $n$ such that 
\begin{equation*}
	P^{\prime}AP=
	\begin{bmatrix}
		A_1 & A_2\\
		0&A_3
	\end{bmatrix}, 
\end{equation*} where $A_1$ and $A_3$ are square 
matrices of order at least one. An 
equivalent condition for the irreducibility of a non-negative matrix  
is given below.
\begin{theorem}[\hspace{1sp}\cite{Horn}]\label{irreducible}
	Let $A$ be a square matrix of order $n$ whose entries are 
	non-negative. Then $A$ 
	is irreducible if and only if $(I_n+A)^{n-1}>0$.
\end{theorem}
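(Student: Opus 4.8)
The plan is to reformulate irreducibility in combinatorial (directed-graph) terms and then translate the positivity of $(I_n+A)^{n-1}$ into a statement about walks. Associate to $A$ the directed graph $\Gamma(A)$ on the vertex set $\{1,\dots,n\}$ having an arc $i\to j$ precisely when $a_{ij}>0$ (equivalently $a_{ij}\neq 0$, since $A\ge 0$). The first step is to prove that $A$ is irreducible if and only if $\Gamma(A)$ is strongly connected, i.e.\ for every ordered pair of distinct vertices there is a directed path from the first to the second.

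For this equivalence I would argue contrapositively in both directions. If $A$ is reducible, say $P^{\prime}AP=\begin{bmatrix}A_1&A_2\\0&A_3\end{bmatrix}$ with $A_1,A_3$ square of positive order, then the relabelling given by $P$ splits the vertices into two nonempty blocks $I$ and $J$ (the index sets of $A_1$ and $A_3$) such that the zero block forces $a_{uv}=0$ for all $u\in J$, $v\in I$; hence $\Gamma(A)$ has no arc from $J$ to $I$, so no vertex of $I$ is reachable from any vertex of $J$, and $\Gamma(A)$ is not strongly connected. Conversely, if $\Gamma(A)$ is not strongly connected, pick vertices $a,b$ with no path from $a$ to $b$, and let $R$ be the set of vertices reachable from $a$; then $\emptyset\neq R\neq\{1,\dots,n\}$, and there is no arc from $R$ to its complement (such an arc would extend reachability), so $a_{uv}=0$ for $u\in R$, $v\in R^{c}$. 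Ordering the complement of $R$ first then produces the forbidden block-triangular form with a zero lower-left block, and $A$ is reducible.

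The second step records the standard fact that, because $A\ge 0$, the expansion $(A^k)_{ij}=\sum a_{i i_1}a_{i_1 i_2}\cdots a_{i_{k-1} j}$ is a sum of nonnegative terms, so $(A^k)_{ij}>0$ if and only if there is a directed walk of length exactly $k$ from $i$ to $j$ in $\Gamma(A)$ (with the convention that $A^0=I_n$ records walks of length $0$). Expanding $(I_n+A)^{n-1}=\sum_{k=0}^{n-1}\binom{n-1}{k}A^k$ with strictly positive binomial coefficients and nonnegative summands, the $(i,j)$ entry is positive exactly when $(A^k)_{ij}>0$ for some $0\le k\le n-1$; thus $(I_n+A)^{n-1}>0$ holds if and only if every ordered pair $(i,j)$ is joined by a walk of length at most $n-1$.

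Finally I combine the two steps. The diagonal entries of $(I_n+A)^{n-1}$ are always positive (the length-$0$ walk from $i$ to $i$), so the content concerns distinct pairs. If $A$ is irreducible, strong connectivity gives a directed path between any two distinct vertices, and a shortest such path is simple, hence visits at most $n$ vertices and has length at most $n-1$; therefore $(I_n+A)^{n-1}>0$. Conversely, if $(I_n+A)^{n-1}>0$, then every pair is joined by a walk, and a walk always contains a path, so $\Gamma(A)$ is strongly connected and $A$ is irreducible. I expect the main obstacle to be the graph-theoretic characterization of irreducibility in the first step, namely matching the block-triangular definition with the reachability partition; once that is in place, the only quantitative input is the elementary observation that a shortest path among $n$ vertices has length at most $n-1$, which is exactly what fixes the exponent $n-1$.
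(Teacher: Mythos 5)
Your proof is correct and complete. The paper gives no proof of this statement --- it is quoted directly from Horn and Johnson \cite{Horn} --- and your argument (the digraph characterization of irreducibility via strong connectivity, followed by the expansion $(I_n+A)^{n-1}=\sum_{k=0}^{n-1}\binom{n-1}{k}A^k$ with the observation that a shortest path on $n$ vertices has length at most $n-1$) is precisely the standard textbook proof of this classical fact.
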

The distance matrix of a connected graph is always irreducible but the	
eccentricity matrix is not irreducible  \cite{Wang1}. Characterizing the 
irreducible eccentricity matrices remains an open problem. It has been 
shown 
that the 
eccentricity matrix of a tree is irreducible \cite{Wang1} and this result 
was extended 
in\cite{Wang2}.

In the following proposition, we show that the eccentricity matrix of the 
wheel graph is irreducible.
\begin{proposition}
	The eccentricity matrix	of $W_n$ is irreducible.
\end{proposition}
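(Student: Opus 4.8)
The plan is to apply Theorem \ref{irreducible}, which characterizes irreducibility of a non-negative matrix $A$ via the positivity of $(I_n+A)^{n-1}$. Since $E(W_n)$ has non-negative entries, it suffices to show that $(I_n+E(W_n))^{n-1}>0$, i.e.\ that every entry is strictly positive. The cleanest route is to interpret entrywise positivity of $(I_n+E(W_n))^{n-1}$ combinatorially: the $(i,j)$-entry is positive precisely when there is a walk of length at most $n-1$ from $v_i$ to $v_j$ in the graph $\Gamma$ whose edges are the nonzero off-diagonal positions of $E(W_n)$. Thus the whole problem reduces to showing that $\Gamma$ is connected and has diameter at most $n-1$; the self-loops coming from $I_n$ let walks pad their length, so connectedness alone will give positivity for a suitable power.

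First I would read off the nonzero pattern of $E(W_n)$ from the block form \eqref{eccentricity matrix of wheel graph}. The first row and column are $\mathbf{e}$, so the hub $v_1$ is joined in $\Gamma$ to \emph{every} cycle vertex $v_2,\dots,v_n$. The cycle block is $\widetilde{E}=\cir(\mathbf{u}^{\prime})$ with $\mathbf{u}=(0,0,2,2,\dots,2,0)^{\prime}$; the zeros in the first two and last coordinates of $\mathbf{u}$ encode exactly that each cycle vertex $v_i$ is joined in $\Gamma$ to those cycle vertices at distance $2$ in $W_n$ (the eccentricity $2$ vertices), and \emph{not} to its two neighbours on the cycle. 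So $\Gamma$ restricted to the cycle vertices is the graph where $v_i\sim v_j$ iff $d_{W_n}(v_i,v_j)=2$.

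Next I would argue connectivity of $\Gamma$. This is immediate from the hub: since $v_1$ is adjacent to all other vertices in $\Gamma$, any two vertices are joined by a walk of length at most two through $v_1$, so $\Gamma$ is connected with diameter at most $2$. Consequently, for any $i,j$ there is a walk of length exactly $k$ for every $k\ge 2$ (pad using a self-loop from the $I_n$ term), so every entry of $(I_n+E(W_n))^{n-1}$ is strictly positive once $n-1\ge 2$, i.e.\ $n\ge 3$, which holds under our standing assumption $n\ge 5$ (and the case $W_4$ where $E(W_4)=D(W_4)$ is irreducible since the distance matrix of a connected graph always is). By Theorem \ref{irreducible}, $E(W_n)$ is irreducible.

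I do not expect a serious obstacle here: the presence of the all-ones first row and column makes the hub a universal vertex in $\Gamma$, which trivializes connectivity. The only point requiring a little care is the bookkeeping that $(I_n+E(W_n))^{n-1}$ is entrywise positive rather than merely that the graph $\Gamma$ is connected; I would make this rigorous by noting $(I_n+E(W_n))^{n-1}=\sum_{k=0}^{n-1}\binom{n-1}{k}E(W_n)^k$, so a positive $(i,j)$-entry in any single power $E(W_n)^k$ with $0\le k\le n-1$ forces a positive entry in the sum, and the diameter-$2$ observation supplies such a $k$ (namely $k\le 2$) for every pair $(i,j)$, including the diagonal via $k=0$.
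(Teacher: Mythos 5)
Your proposal is correct and takes essentially the same approach as the paper: both verify the criterion of Theorem \ref{irreducible}, and both rest on the single key fact that the all-ones first row and column of $E(W_n)$ make the hub a ``universal'' vertex, so every pair of indices is connected within two steps. The paper expresses this algebraically by computing $(I_n+E(W_n))^2>0$ in block form (leaving the passage from the square to the $(n-1)$-st power implicit), while you express the same fact combinatorially via walks in the support graph together with the binomial expansion, which merely makes that last padding step explicit.
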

\begin{proof}
	For $n=4$, we have $E(W_n)=D(W_n)$ which  is irreducible. If $n\geq 
	5$, then 
	\begin{eqnarray*}
		(I_n+E(W_n))^2=
		\begin{bmatrix}
			n &2(n-3)\mathbf{e^{\prime}}\\[3pt]
			2(n-3)	\mathbf{e}& 
			J_{n-1}+\widetilde{E}^2+2\widetilde{E}+I_{n-1}
		\end{bmatrix}.
	\end{eqnarray*}
	Since $\widetilde{E} \geq 0$, each entry in 
	$J_{n-1}+\widetilde{E}^2+2\widetilde{E}+I_{n-1}$ 
	is at least one. Now, the proof follows from 
	Theorem \ref{irreducible}.
\end{proof}
\subsection{Euclidean distance matrix} We investigate the concept of the
Euclidean 
distance matrix for $E(W_n)$.		A real square matrix $M=(m_{ij})$ of 
order $n$ is said to be an 
\textit{Euclidean 
	distance matrix} (EDM) if there exist $n$ vectors $\mathbf{x_1}, 
\cdots, 
\mathbf{x_n}$ in $\mathbb{R}^r$ such that 
$m_{ij}=\|\mathbf{x_i}-\mathbf{x_j}\|^2_2$, where $\|.\|_2$ stands for 
the	Euclidean norm on $\mathbb{R}^r$ \cite{EDM}. It was shown that 
$D(W_n)$ 
is an EDM	
\cite{EDM}. It is natural to ask whether $E(W_n)$ is an EDM. The 
answer to this question is negative. We show that $E(W_n)$ is not an 
EDM for all $n\geq 5$. Note that $E(W_4)$ is an EDM because $E(W_4) = 
D(W_4)$. We use the following characterization of EDM to prove our 
result.
\begin{theorem}[see \cite{EDM}]\label{EDM charac 1}
	Let $M$ be a real symmetric matrix of order $n$ whose  diagonal 
	entries are zero. Then $M$ is an EDM 
	if and only if $\mathbf{x}^{\prime}M\mathbf{x}\leq 0$ for all
	$\mathbf{x} \in \mathbb{R}^n$ such that 
	$\mathbf{x}^{\prime}\mathbf{e}=0$.
\end{theorem}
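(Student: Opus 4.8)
The plan is to prove both implications by tying the quadratic-form condition on the hyperplane $\mathbf{e}^{\perp}=\{\mathbf{x}:\mathbf{x}^{\prime}\mathbf{e}=0\}$ to the positive semidefiniteness of a centered version of $M$. Throughout I would write $P:=I_n-\frac{1}{n}J_n$ for the orthogonal projection onto $\mathbf{e}^{\perp}$, using that $P=P^{\prime}$, $P^2=P$, $P\mathbf{e}=0$, and $P\mathbf{x}=\mathbf{x}$ exactly when $\mathbf{x}^{\prime}\mathbf{e}=0$. The central object is the doubly centered matrix $B:=-\frac{1}{2}PMP$, and the whole argument rests on recognizing that the hypothesis is precisely a semidefiniteness statement about $B$.

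For the forward direction, suppose $M$ is an EDM, so $m_{ij}=\|\mathbf{p}_i-\mathbf{p}_j\|_2^2=\|\mathbf{p}_i\|_2^2+\|\mathbf{p}_j\|_2^2-2\langle\mathbf{p}_i,\mathbf{p}_j\rangle$ for some $\mathbf{p}_1,\ldots,\mathbf{p}_n\in\mathbb{R}^r$. I would take any $\mathbf{x}=(x_1,\ldots,x_n)^{\prime}$ with $\sum_i x_i=0$, expand $\mathbf{x}^{\prime}M\mathbf{x}=\sum_{i,j}x_ix_jm_{ij}$, and substitute the three terms above. The two sums carrying $\|\mathbf{p}_i\|_2^2$ and $\|\mathbf{p}_j\|_2^2$ each contain a factor $\sum_k x_k=0$ and so vanish, leaving $\mathbf{x}^{\prime}M\mathbf{x}=-2\big\langle\sum_i x_i\mathbf{p}_i,\sum_j x_j\mathbf{p}_j\big\rangle=-2\big\|\sum_i x_i\mathbf{p}_i\big\|_2^2\le 0$. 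This is the easy half.

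For the converse, the hypothesis says exactly that $PMP$ is negative semidefinite: any vector $P\mathbf{y}$ lies in $\mathbf{e}^{\perp}$, so $\mathbf{y}^{\prime}PMP\mathbf{y}=(P\mathbf{y})^{\prime}M(P\mathbf{y})\le 0$, while restricting to $\mathbf{x}\in\mathbf{e}^{\perp}$ (where $P\mathbf{x}=\mathbf{x}$) recovers the stated inequality. Hence $B=-\frac{1}{2}PMP$ is positive semidefinite, and I would factor it as $B=Y^{\prime}Y$ for some $r\times n$ real matrix $Y$ with $r=\operatorname{rank}B$; letting $\mathbf{p}_1,\ldots,\mathbf{p}_n\in\mathbb{R}^r$ be the columns of $Y$ gives $B_{ij}=\langle\mathbf{p}_i,\mathbf{p}_j\rangle$. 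It then suffices to verify the embedding identity $m_{ij}=B_{ii}+B_{jj}-2B_{ij}$, since this yields $m_{ij}=\|\mathbf{p}_i-\mathbf{p}_j\|_2^2$ and exhibits $M$ as an EDM.

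That identity is where the real content lies, and it is the step I expect to be the main obstacle. Writing $s_i=\sum_j m_{ij}$ for the common row/column sums (here symmetry is used) and $s=\sum_{i,j}m_{ij}$, expanding $PMP=(I_n-\frac{1}{n}J_n)M(I_n-\frac{1}{n}J_n)$ gives $(PMP)_{ij}=m_{ij}-\frac{1}{n}s_i-\frac{1}{n}s_j+\frac{1}{n^2}s$. The delicate point is that the diagonal zeros force $B_{ii}=\frac{s_i}{n}-\frac{s}{2n^2}$ (the $m_{ii}$ term dropping out), and only with this collapse do all the $s_i$, $s_j$, and $s$ contributions in $B_{ii}+B_{jj}-2B_{ij}$ cancel so that precisely $m_{ij}$ survives. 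Carrying out this cancellation carefully — it is exactly where both hypotheses $m_{ii}=0$ and $M=M^{\prime}$ enter — completes the converse and hence the theorem.
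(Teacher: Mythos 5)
Your proof is correct, but note that the paper itself offers no proof of this statement at all: it is quoted verbatim from the reference \cite{EDM} (where it is the classical Schoenberg/Young--Householder criterion), so there is no internal argument to compare against. What you have written is precisely the standard proof of that criterion, and it is sound in every step. The forward direction is exactly as you say (and, worth noting, does not use $m_{ii}=0$ at all). For the converse, your flagged ``main obstacle'' is in fact already resolved by the formulas you wrote down: with $s_i=\sum_k m_{ik}$, $s=\sum_{i,j}m_{ij}$, one has $-2B_{ij}=(PMP)_{ij}=m_{ij}-\frac{s_i}{n}-\frac{s_j}{n}+\frac{s}{n^2}$ and, using $m_{ii}=0$, $B_{ii}=\frac{s_i}{n}-\frac{s}{2n^2}$, whence
\begin{equation*}
B_{ii}+B_{jj}-2B_{ij}
=\Bigl(\frac{s_i}{n}-\frac{s}{2n^2}\Bigr)+\Bigl(\frac{s_j}{n}-\frac{s}{2n^2}\Bigr)
+\Bigl(m_{ij}-\frac{s_i}{n}-\frac{s_j}{n}+\frac{s}{n^2}\Bigr)=m_{ij},
\end{equation*}
so $m_{ij}=\|\mathbf{p}_i-\mathbf{p}_j\|_2^2$ once $B=Y^{\prime}Y$. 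Two small points you use implicitly and should state: symmetry of $M$ (with $P=P^{\prime}$) makes $B$ symmetric, which is what licenses the factorization $B=Y^{\prime}Y$ of a positive semidefinite matrix, and it also identifies the $j$-th column sum of $M$ with $s_j$ in the expansion of $(PMP)_{ij}$. With those remarks added, your argument is a complete, self-contained proof of a result the paper merely imports, which is a net gain: the paper's reliance on \cite{EDM} is replaced by an elementary linear-algebra derivation using only the projection $P=I_n-\frac{1}{n}J_n$ and a rank factorization.
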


\begin{proposition}\label{EDM prop 1}
	For $n\geq5$, the matrix	$E(W_n)$ is not an Euclidean distance matrix.
\end{proposition}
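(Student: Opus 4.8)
The plan is to invoke the characterization in Theorem~\ref{EDM charac 1}. Since $E(W_n)$ is symmetric with zero diagonal, it is an EDM if and only if $\mathbf{x}'E(W_n)\mathbf{x}\le 0$ for every $\mathbf{x}$ with $\mathbf{x}'\mathbf{e}=0$. Hence it suffices to produce a single vector $\mathbf{x}\in\mathbb{R}^n$ satisfying $\mathbf{x}'\mathbf{e}=0$ and $\mathbf{x}'E(W_n)\mathbf{x}>0$.

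First I would reduce to the cyclic block. Using the block form of $E(W_n)$ in $(\ref{eccentricity matrix of wheel graph})$, I take $\mathbf{x}=(0,\mathbf{y}')'$ with $\mathbf{y}\in\mathbb{R}^{n-1}$ supported off the hub. A direct multiplication gives $\mathbf{x}'\mathbf{e}=\mathbf{y}'\mathbf{e}$ and $\mathbf{x}'E(W_n)\mathbf{x}=\mathbf{y}'\widetilde{E}\mathbf{y}$, so the hub coordinate drops out and the task becomes: find $\mathbf{y}$ orthogonal to $\mathbf{e}$ with $\mathbf{y}'\widetilde{E}\mathbf{y}>0$. This is where the non-EDM behaviour must come from, so the whole problem is really about the spectrum of the circulant block $\widetilde{E}=\cir(\mathbf{u}')$ of order $m:=n-1$.

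Next I would diagonalize $\widetilde{E}$ via the Fourier basis. With $\omega=e^{2\pi i/m}$ and $\mathbf{f}_k=(1,\omega^{k},\ldots,\omega^{(m-1)k})'$, the eigenvalue attached to $\mathbf{f}_k$ is $\mu_k=2\sum_{l=2}^{m-2}\omega^{lk}$; summing the geometric series (and using $\sum_{l=0}^{m-1}\omega^{lk}=0$ for $k\neq0$) yields $\mu_k=-2-4\cos\!\big(2\pi k/m\big)$ for $k=1,\ldots,m-1$, while $\mu_0=2(m-3)$. Thus $\mu_k>0$ precisely when $\cos(2\pi k/m)<-\tfrac12$, that is, when $m/3<k<2m/3$. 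Because $\widetilde{E}$ is a \emph{real} symmetric circulant and $\mu_k$ is real, the real and imaginary parts of $\mathbf{f}_k$ are real eigenvectors for $\mu_k$, and both are orthogonal to $\mathbf{e}$ since $\mathbf{f}_k'\mathbf{e}=\sum_{j}\omega^{jk}=0$ for $k\neq0$.

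The crux, and the only real obstacle, is the existence of an integer $k_0$ in the open interval $(m/3,2m/3)$. For $n\ge5$ we have $m\ge4$, so this interval has length $m/3>1$ and therefore contains an integer $k_0$ with $1\le k_0\le m-1$; for this $k_0$ we get $\mu_{k_0}>0$. Choosing $\mathbf{y}$ to be a nonzero real or imaginary part of $\mathbf{f}_{k_0}$ then gives $\mathbf{y}'\mathbf{e}=0$ and $\mathbf{y}'\widetilde{E}\mathbf{y}=\mu_{k_0}\|\mathbf{y}\|^2>0$, so $\mathbf{x}=(0,\mathbf{y}')'$ violates the inequality in Theorem~\ref{EDM charac 1} and $E(W_n)$ fails to be an EDM. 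I would also note that this interval step is exactly where the hypothesis $n\ge5$ is forced: for $m=3$ (the case $n=4$) the interval $(1,2)$ contains no integer, every $\mu_k\le0$, and indeed $E(W_4)=D(W_4)$ is an EDM, confirming the threshold is sharp.
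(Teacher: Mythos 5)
Your proof is correct, and although it rests on the same outer scaffolding as the paper's argument --- both invoke Theorem~\ref{EDM charac 1} and exhibit a vector $\mathbf{x}$ with $\mathbf{x}'\mathbf{e}=0$ and $\mathbf{x}'E(W_n)\mathbf{x}>0$ --- the way you manufacture the witness is genuinely different. The paper writes down explicit alternating $\pm1$ test vectors and splits into three parity cases ($n$ odd; $n$ even with $n/2$ odd; $n$ even with $n/2$ even), checking by direct multiplication that the quadratic form equals $2(n-1)$, respectively $2(n-4)$. You instead kill the hub coordinate, reduce everything to the circulant block $\widetilde{E}=\cir(\mathbf{u}')$ of order $m=n-1$, and diagonalize it in the Fourier basis: the eigenvalues $\mu_k=-2-4\cos(2\pi k/m)$, $k\neq 0$, are positive exactly when $m/3<k<2m/3$, an open interval of length $m/3>1$ which must contain an integer once $m\geq 4$, and the real part of the corresponding Fourier vector is a nonzero real eigenvector automatically orthogonal to $\mathbf{e}$. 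Your route buys uniformity (no parity cases), a structural explanation of \emph{why} the EDM property fails (the cyclic block has positive spectrum on $\mathbf{e}^{\perp}$ precisely when $(m/3,2m/3)$ captures an integer), and it subsumes the paper's follow-up remark: for odd $n$ the paper's witness $\tilde{\mathbf{y}}=(1,-1,\dots,1,-1)'$ is exactly your Fourier mode $k=m/2$, with eigenvalue $\mu_{m/2}=2$. What the paper's version buys in exchange is that it is completely elementary --- bare matrix--vector multiplication, no circulant spectral theory. One small caveat: your closing sentence about $n=4$ is only heuristic, since the block form (\ref{eccentricity matrix of wheel graph}) with $\mathbf{u}=(0,0,2,\dots,2,0)'$, and hence your eigenvalue formula, is derived under the assumption $n\geq 5$; for $n=4$ one has $W_4=K_4$ and the cyclic block of $E(W_4)=D(W_4)$ is $\cir((0,1,1))=J_3-I_3$, not the degenerate circulant your formula would suggest. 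This does not affect your proof, which claims the result only for $n\geq 5$.
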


\begin{proof}
	For each $n$, we find a vector $\mathbf{x}\in \mathbb{R}^n$ such that 
	$\mathbf{x^{\prime}}\mathbf{	e}=0$ and 
	$\mathbf{x^{\prime}}E(W_n)\mathbf{x}>0$.
	By a simple manipulation, the matrix 
	$E(W_n)$ given  in (\ref{eccentricity matrix of wheel graph}),
	can be written as 
	\begin{align}\label{E splitted form}
		E(W_n) = \begin{bmatrix}
			0 &\mathbf{e^{\prime}}\\[3pt]
			\mathbf{e}& 2J_{n-1}\\
		\end{bmatrix}-2\begin{bmatrix}
			0 &\mathbf{0^{\prime}}\\[3pt]
			\mathbf{0}& F
		\end{bmatrix}, ~~~\text{where}~~~ 
		F=\cir\big((1,1,\underbrace{0,\cdots,0}_{\text{(n-4) 
				times}},1)\big).
	\end{align} 
	
	We first assume that $n$ is odd. Consider the vector 
	\begin{equation*}
		\mathbf{y^{\prime}}=(0,\mathbf{\tilde{y}^{\prime}}) \in 
		\mathbb{R}^n,~ \text{where}~
		\mathbf{\tilde{y}}=(1,-1,1,-1,\cdots,1,-1)^{\prime} \in 
		\mathbb{R}^{n-1}.
	\end{equation*}
	Then $\mathbf{e}^{\prime}\mathbf{\tilde{y}}=0$ and 
	$F\mathbf{\tilde{y}}=-\mathbf{\tilde{y}}$. This implies that
	\begin{align}\notag
		E(W_n)\mathbf{y}
		& =\begin{bmatrix}
			\mathbf{e^{\prime}}	\mathbf{\tilde{y}}\\[3pt]
			2J_{n-1}	\mathbf{\tilde{y}}
		\end{bmatrix} -2\begin{bmatrix}
			0\\[3pt]
			F\mathbf{\tilde{y}}
		\end{bmatrix}= 2\begin{bmatrix}
			0 \\\mathbf{\tilde{y}}
		\end{bmatrix}=
		2\mathbf{y}.
	\end{align}
	Further, $\mathbf{y}^{\prime}E(W_n)\mathbf{y}= \label{yey}
	2\mathbf{y}^{\prime}\mathbf{y}=2({n-1})>0.$
	We now consider the case  $n$ is even. Then $n=2m$ where 
	$m \geq 3$. 
	
	{\it{Case{(i)}}}: 	 Assume that $m$ is odd. Consider the vector 
	\begin{equation*}
		\mathbf{x}=(0,1,-1,1,-1,\cdots,1,-1,0,1,-1,\cdots,1,-1)^{\prime} \in 
		\mathbb{R}^n, 
	\end{equation*}
	where zero occurs in the first and $(m+1)$-th 
	coordinates. Let $\mathbf{x^{\prime}}=(0,\mathbf{\tilde{x}^{\prime}})$. 
	Then 
	$\mathbf{{e}^{\prime}_{n \times 
			1}}\mathbf{x}=\mathbf{{e}^{\prime}_{(n-1)\times 
			1}}\mathbf{\tilde{x}}=0$. Therefore,
		$	E(W_n)\mathbf{x}
		=-2\begin{bmatrix}
			0&
			(F\mathbf{\tilde{x}})^{\prime}
		\end{bmatrix}^{\prime}.$
	It can be verified that 
	$E(W_n)\mathbf{x}=-2(0,-1,1,-1,1,\cdots,-1,1,-1,0,0,0,1,-1,\cdots,1,-1,1)
	^{\prime}$ where zero occurs in the first, $m$-th, 
	$(m+1)$-th and $(m+2)$-th coordinates. Note that the signs of non-zero 
	coordinates of 
	$E(W_n)\mathbf{x}$ 
	are the same as those of $\mathbf{x}$. Hence, 
	$\mathbf{x}^{\prime}E(W_n)\mathbf{x}=2(n-4)>0$.
	
	{\it{Case{(ii)}}}: Suppose that $m$ is even. In this case, take 
	\begin{equation*}
		\mathbf{x}=(0,1,-1,1-1,\cdots,1,-1,1,0,-1,1,-1,1,\cdots,-1,1,-1)
		^{\prime} \in \mathbb{R}^n,
	\end{equation*} where zero in the first and $(m+1)$-th 
	coordinates. Proceeding similar to the proof of the previous case, we get  
	$\mathbf{x}^{\prime}E(W_n)\mathbf{x}=2(n-4)>0.$
	Therefore, by  Theorem \ref{EDM charac 1}, the result follows.
\end{proof}
\begin{remark}
	In fact, the proof of the above proposition shows that if $n$ is 
	odd, then $\mathbf{y}$ is an eigenvector of $E(W_n)$ corresponding to the 
	eigenvalue $2$.
\end{remark}

\subsection{Spectral radius of $E(W_n)$}
The \textit{spectral radius} of a square matrix $A$, denoted by 
$\rho(A)$, is the maximum of the moduli of the eigenvalues of $A$. 
We recall 
the 
fact that the matrix $E(W_n)$ is non-negative and irreducible. Hence, by 
Perron-Frobenius Theorem \cite{Horn}, the spectral radius $\rho(E(W_n))$ of 
$E(W_n)$ is an 
eigenvalue of $E(W_n)$ and there exists a non-negative eigenvector $\mathbf{x}$ 
corresponding to $\rho(E(W_n))$. That is, 
$E(W_n)\mathbf{x}=\rho(E(W_n))\mathbf{x}$ and $\mathbf{x} \geq 0$. 

We end this section by computing the spectral radius $\rho(E(W_n))$  and 
a 
non-negative eigenvector corresponding to  $\rho(E(W_n))$. Let us recall 
the concepts of equitable partition and the associated characteristic 
matrix.

Let $A$ be a real symmetric matrix of order $n$ whose rows and columns
are indexed by $X = \{1, 2, \cdots , n\}$. Suppose that 
$\{X_1,X_2\}$ is a partition of $X$ such that the number of elements in 
$X_1$ and $X_2$ are $n_1$ and $n_2$ respectively. Let
$	A = 
\left[	\begin{smallmatrix}
	A_{11}&A_{12}\\
	A_{21}&A_{22}
\end{smallmatrix}\right], $
where each $A_{ij}$ denotes the block submatrix of $A$ of order $n_i 
\times n_j$ formed by rows in 
$X_i$ and the columns in $X_j$. If $q_{ij}$ denotes the average row 
sum of $A_{ij}$  
(the sum of all entries in $A_{ij}$ divided by the number of rows), 
then the matrix $Q = (q_{ij})$ is said to be a \textit{quotient matrix} 
of 
$A$ 
with respect to the given partition $X$. If each row sum of the block 
$A_{ij}$ 
is constant (that is, $A_{ij}\mathbf{e_{n_j}}=q_{ij}\mathbf{e_{n_i}}$, 
for 
every 
$i$ and $j$), then the partition is 
called 
\textit{equitable}. The \textit{characteristic matrix} 
$C= (c_{ij} )$ with respect to the partition $X$ is the $n \times 2$ 
matrix 
such that $c_{ij}= 1$, if $i \in 
X_j$ and $0$ otherwise.

We state a result which gives the relation between $\rho(A)$ and $\rho(Q)$.
\begin{theorem}[\hspace{1sp}\cite{Equitable partitionn}]\label{spectral 
radius}
Let $A$ be a non-negative matrix. If $Q$ is the quotient matrix of $A$ 
with respect to an equitable partition, then the spectral 
radius of 
$A$ and $Q$ are equal.
\end{theorem}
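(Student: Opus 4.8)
The plan is to prove the equality of spectral radii through the fundamental intertwining relation between $A$ and its quotient matrix $Q$ that is forced by equitability. Writing $C$ for the characteristic matrix of the partition $\{X_1,\dots,X_k\}$, i.e.\ the $n\times k$ matrix whose $j$-th column is the indicator vector of $X_j$ (the paper's two-part case being $k=2$), I would first record that the equitable condition (each block $A_{ij}$ has constant row sum $q_{ij}$, so $A_{ij}\mathbf{e}=q_{ij}\mathbf{e}$) is exactly the statement
\[
AC = CQ.
\]
Indeed, for a row $i$ lying in part $X_l$ the $(i,j)$ entry of $AC$ is the row sum of the block $A_{lj}$, which equals $q_{lj}$ by equitability, and this matches $(CQ)_{ij}=q_{lj}$. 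Two further observations are needed: $C$ has linearly independent columns (they have disjoint supports), so $N(C)=\{0\}$; and $Q\ge 0$, since each $q_{ij}$ is an average of non-negative entries of $A$.

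For the inequality $\rho(Q)\le\rho(A)$ I would lift an eigenvector of $Q$ to $A$. Since $Q\ge 0$, the Perron-Frobenius theorem gives a vector $\mathbf{v}\neq 0$ with $Q\mathbf{v}=\rho(Q)\mathbf{v}$. Then $A(C\mathbf{v})=CQ\mathbf{v}=\rho(Q)(C\mathbf{v})$, and $C\mathbf{v}\neq 0$ because $N(C)=\{0\}$; thus $\rho(Q)$ is an eigenvalue of $A$ and, being non-negative, satisfies $\rho(Q)\le\rho(A)$.

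The reverse inequality $\rho(A)\le\rho(Q)$ is the crux, and it is where non-negativity must be exploited. Transposing $AC=CQ$ and using that $A$ is symmetric gives $C'A=Q'C'$. By Perron-Frobenius applied to the non-negative matrix $A$, there is $\mathbf{x}\ge 0$, $\mathbf{x}\neq 0$, with $A\mathbf{x}=\rho(A)\mathbf{x}$. Setting $\mathbf{w}:=C'\mathbf{x}$, I would compute $Q'\mathbf{w}=Q'C'\mathbf{x}=C'A\mathbf{x}=\rho(A)\mathbf{w}$. The decisive point is that $\mathbf{w}\ge 0$ and $\mathbf{w}\neq 0$: its $j$-th coordinate is $\sum_{i\in X_j}x_i$, and since $\mathbf{x}\ge 0$ is non-zero, at least one such coordinate is positive. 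Hence $\rho(A)$ is an eigenvalue of $Q'$, and therefore of $Q$, so $\rho(A)\le\rho(Q)$. Combining the two inequalities yields $\rho(A)=\rho(Q)$.

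I expect the main obstacle to be exactly this reverse direction. The relation $AC=CQ$ by itself only shows that $R(C)$ is an $A$-invariant subspace and that every eigenvalue of $Q$ is an eigenvalue of $A$; it does not, on its own, prevent the largest eigenvalue of $A$ from being carried by the complementary invariant subspace $R(C)^{\perp}$. It is precisely the non-negativity of $A$ (through the existence of a non-negative Perron eigenvector $\mathbf{x}$), together with the non-negativity and full column rank of $C$, that forces the Perron root to descend to $Q$ via the averaged vector $\mathbf{w}=C'\mathbf{x}$; without the non-negativity hypothesis this step would fail.
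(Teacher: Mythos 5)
The paper offers no proof of this statement to compare against: Theorem \ref{spectral radius} is imported verbatim from the reference \cite{Equitable partitionn} of You, Yang, So and Xi, so your argument has to be judged on its own merits. On those merits it is correct, and it is essentially the standard proof of this result: the identity $AC=CQ$ encoding equitability, lifting an eigenvector of $Q$ through $C$ (using $N(C)=\{0\}$) to get $\rho(Q)\le\rho(A)$, and pushing a Perron eigenvector of $A$ down through $C^{\prime}$ to get $\rho(A)\le\rho(Q)$; your closing remark correctly identifies that non-negativity is exactly what makes the second inequality work, since $AC=CQ$ alone only gives $\sigma(Q)\subseteq\sigma(A)$. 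One caveat deserves mention: the theorem as stated assumes only that $A$ is non-negative, yet your reverse direction invokes symmetry of $A$ to pass from $AC=CQ$ to $C^{\prime}A=Q^{\prime}C^{\prime}$. Within this paper that is harmless, because the quotient matrix is only defined for real symmetric $A$ and the theorem is only applied to the symmetric matrix $E(W_n)$; but the hypothesis is not needed. Transposing $AC=CQ$ gives $C^{\prime}A^{\prime}=Q^{\prime}C^{\prime}$ unconditionally, and applying Perron--Frobenius to the non-negative matrix $A^{\prime}$ (equivalently, taking a left Perron eigenvector $\mathbf{y}\ge 0$, $\mathbf{y}\neq 0$, with $\mathbf{y}^{\prime}A=\rho(A)\mathbf{y}^{\prime}$, noting $\rho(A^{\prime})=\rho(A)$) yields $Q^{\prime}(C^{\prime}\mathbf{y})=C^{\prime}A^{\prime}\mathbf{y}=\rho(A)(C^{\prime}\mathbf{y})$, with $C^{\prime}\mathbf{y}\ge 0$ non-zero by the same support argument you give; hence $\rho(A)\in\sigma(Q)$ with no symmetry assumption. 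With that one-line adjustment your proof establishes the theorem in the full generality in which it is stated and used in the cited source.
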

Using the notion of equitable partition, we now determine the spectral radius 
of $E(W_n)$.
\begin{theorem}
The spectral radius of $E(W_n)$ is $(n-4)+\sqrt{n^2-7n+15}.$
\end{theorem}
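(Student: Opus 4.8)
The plan is to exploit the symmetry between the hub and the rim vertices and to apply Theorem \ref{spectral radius} on equitable partitions. Since $E(W_n)$ is non-negative and irreducible, once I produce an equitable partition whose quotient matrix $Q$ is non-negative, the theorem reduces the computation of $\rho(E(W_n))$ to that of the small matrix $\rho(Q)$, and the Perron--Frobenius discussion already in place guarantees that this spectral radius is a genuine eigenvalue of $E(W_n)$.

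First I would take the partition $X_1 = \{1\}$ (the hub) and $X_2 = \{2, 3, \ldots, n\}$ (the rim vertices), which mirrors the block structure of $E(W_n)$ recorded in (\ref{eccentricity matrix of wheel graph}). To verify that this partition is equitable I use that block form directly: the $(1,1)$ block is the scalar $0$, the $(1,2)$ block is $\mathbf{e}^{\prime}$ and the $(2,1)$ block is $\mathbf{e}$, each of which trivially has constant row sum, while the $(2,2)$ block is $\widetilde{E} = \cir(\mathbf{u}^{\prime})$. Because $\widetilde{E}$ is circulant with $\mathbf{u} = (0,0,2,\ldots,2,0)^{\prime}$, each of its rows contains exactly $n-4$ entries equal to $2$, so $\widetilde{E}\,\mathbf{e} = 2(n-4)\mathbf{e}$; this constant row sum is precisely the condition $A_{22}\mathbf{e} = q_{22}\mathbf{e}$ required for equitability.

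With the average row sums in hand, the quotient matrix is
\begin{equation*}
Q = \begin{bmatrix} 0 & n-1 \\ 1 & 2(n-4) \end{bmatrix}.
\end{equation*}
Its characteristic polynomial is $\lambda^2 - 2(n-4)\lambda - (n-1)$, whose roots are $(n-4) \pm \sqrt{(n-4)^2 + (n-1)} = (n-4) \pm \sqrt{n^2 - 7n + 15}$. Since $Q$ is non-negative for $n \geq 5$, Theorem \ref{spectral radius} gives $\rho(E(W_n)) = \rho(Q)$, and I would conclude by identifying $\rho(Q)$ with the larger root $(n-4) + \sqrt{n^2 - 7n + 15}$.

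The argument is almost entirely mechanical, so there is no serious obstacle; the only points requiring care are the bookkeeping that each row of $\widetilde{E}$ has exactly $n-4$ twos (which is what makes the partition equitable) and the verification that the positive root is indeed the spectral radius rather than the other eigenvalue. For the latter I would observe that $n^2 - 7n + 15 = (n-4)^2 + (n-1) > (n-4)^2$ for $n \geq 5$, so the smaller root is negative with modulus $\sqrt{n^2 - 7n + 15} - (n-4)$, strictly less than the larger root $\sqrt{n^2 - 7n + 15} + (n-4)$; hence the larger root is the dominant eigenvalue, consistent with the Perron--Frobenius fact that $\rho(E(W_n))$ is attained by a non-negative eigenvector.
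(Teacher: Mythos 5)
Your proof is correct and follows essentially the same route as the paper: the hub--rim equitable partition, the quotient matrix $Q = \left[\begin{smallmatrix} 0 & n-1 \\ 1 & 2(n-4) \end{smallmatrix}\right]$, and the equitable-partition spectral radius theorem. Your additional checks (the row-sum count $2(n-4)$ for $\widetilde{E}$ and the identification of the larger root as $\rho(Q)$) are sound and simply make explicit what the paper leaves implicit.
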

\begin{proof}
We have 
$	E(W_n) = \begin{bmatrix}
0 &\mathbf{e^{\prime}}\\[3pt]
\mathbf{e}& \widetilde{E}\\
\end{bmatrix}$, where $\widetilde{E}=\cir\big((0,0, 
\underbrace{2,\cdots,2}_{\text{(n-4) 
	times}},0)\big)$. Since $\widetilde{E}$ is a circulant matrix, 
it follows that all the row sums of $\widetilde{E}$ are the 
same 
and equal to $2(n-4)$. Now partition the vertex set of $W_n$ as 
$X=\{\{v_1\},\{v_2,\cdots,v_n\}\}$. Then $X$ is an equitable partition.
So, the quotient matrix of $E(W_n)$ with respect to $X$ is
$	Q(W_n) =\left[ \begin{smallmatrix}
0 &n-1\\[3pt]
1& 2(n-4)\\
\end{smallmatrix}\right]$.
As the eigenvalues of $Q$ are $(n-4)\pm\sqrt{n^2-7n+15}$, the proof 
follows by Theorem \ref{spectral radius}.
\end{proof}
From an eigenvector of the quotient matrix $Q$ corresponding to $\rho(Q)$, we 
can obtain an eigenvector of $E(W_n)$ corresponding to $\rho(E(W_n))$ where the 
precise statement is given below.
\begin{lemma}[\hspace{1sp}\cite{Brouwer}]\label{quotient eigen vector}
Let $C$ be a characteristic matrix of an equitable partition $X$ of 
$A$. 
Let 
$v$ be an eigenvector of the quotient matrix $Q$ with respect to $X$ 
for 
an 
eigenvalue $\lambda$. Then $Cv$ is an eigenvector of $A $	for the 
same 
eigenvalue $\lambda$.
\end{lemma}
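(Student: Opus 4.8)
The plan is to reduce everything to the single intertwining identity
\begin{equation*}
	AC = CQ,
\end{equation*}
after which the conclusion is immediate. First I would verify this identity by comparing the two sides column by column. Writing $C_{*j}$ for the $j$-th column of $C$, note that $C_{*j}$ is precisely the indicator vector of the block $X_j$, so the $k$-th coordinate of $AC_{*j}$ equals $\sum_{l \in X_j} A_{kl}$, the sum of the entries of row $k$ of $A$ lying in the columns indexed by $X_j$. If $k \in X_i$, this is exactly the row sum of the block $A_{ij}$ along row $k$; by the equitable hypothesis $A_{ij}\mathbf{e_{n_j}}=q_{ij}\mathbf{e_{n_i}}$ this row sum is the constant $q_{ij}$, independent of which row of $X_i$ was chosen.

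On the other side, the $j$-th column of $CQ$ is $C Q_{*j} = q_{1j} C_{*1} + q_{2j} C_{*2}$, whose $k$-th coordinate is $q_{ij}$ whenever $k \in X_i$, since exactly one of $C_{k1}, C_{k2}$ equals $1$. Thus $AC$ and $CQ$ have the same entries, and $AC = CQ$ holds. With this identity in hand, suppose $Qv = \lambda v$. Then
\begin{equation*}
	A(Cv) = (AC)v = (CQ)v = C(Qv) = \lambda\, (Cv),
\end{equation*}
so $Cv$ lies in the eigenspace of $A$ for the eigenvalue $\lambda$.

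To conclude that $Cv$ is genuinely an eigenvector I would observe that the columns of $C$ are indicator vectors of the disjoint nonempty sets $X_1, X_2$, hence linearly independent; therefore $C$ has full column rank and $Cv = 0$ forces $v = 0$. Since $v$ is an eigenvector and hence nonzero, we get $Cv \neq 0$, completing the argument.

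The proof carries no serious obstacle: the only point demanding care is correctly matching the block row sums produced by the equitable condition with the appropriate entries $q_{ij}$ of the quotient matrix when establishing $AC = CQ$, together with the bookkeeping needed to see that both sides agree in every coordinate. The argument is written for the two-part partition relevant here, but the identity $AC = CQ$ and the computation above are unchanged for a partition into any number of blocks.
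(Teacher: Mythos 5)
Your proof is correct, and it coincides with the standard argument for this fact: the paper itself does not prove the lemma (it is quoted from \cite{Brouwer}), and the proof in that reference is exactly your route, namely establishing the intertwining identity $AC=CQ$ from the equitable condition and then computing $A(Cv)=CQv=\lambda Cv$. Your additional observation that $C$ has full column rank (so $Cv\neq 0$) properly closes the one point often left implicit, that the image vector is genuinely an eigenvector.
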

\begin{remark}
Let $\rho=(n-4)+\sqrt{n^2-7n+15}$. The characteristic matrix of 
$E(W_n)$ 
with respect to $X$ is of order $n \times 2$, whose first row is 
$\mathbf{e_1}$, and the remaining rows are
$\mathbf{e_2}$, where $\mathbf{e_1}, \mathbf{e_2} \in 
\mathbb{R}^2$.  Note that $(\frac{n-1}{\rho},1)^{\prime}$ is an 
eigenvector of 
$Q(W_n) $ corresponding to $\rho(Q(W_n))$. By 
Lemma $\ref{quotient eigen vector}$, $\big(\frac{1}{\rho}(n-1),	
\underbrace{1,\cdots,1}_{\text{(n-1) 
	times}}\big)^{\prime}$ is an eigenvector of $E(W_n)$	corresponding 
to $\rho(E(W_n))$.
\end{remark}

	\section{Determinant of $E(W_n)$}

A  result due to Graham and Pollak \cite{Graham and Pollak}  
showed that $\det(D(T))=(-1)^{n-1}(n-1)2^{n-2}$, where $T$ is a  
tree  on 
$n$ vertices. It is clear that the formula depends only on the number of 
vertices of $T$. This 
result was generalized 
to the distance matrices of weighted trees in \cite{Bapat2}. 
The determinant of the distance matrix of the wheel graph
$W_n$ was studied in \cite{Wheel related graph}. More precisely, 
$\det(D(W_n))=1-n$ if $n$ is even, and $\det(D(W_n))=0$ if 
$n$ is odd.

The primary aim of this section is 	to establish a formula to find the 
determinant of $E(W_n)$. We show that 
\begin{equation*}
	\det(E(W_n))= 
	\begin{cases}
		0 & \text{if } n\equiv1\Mod3,\\
		2^{n-2}(1-n)& \text{if } n \not\equiv1\Mod3,
	\end{cases}
\end{equation*}
which is given in terms of the number of vertices alone. To prove the result, 
we 
obtain the recurrence relation (\ref{recurrence relation2}) involving the 
determinant of the tridiagonal matrix $T_n(-2,-2,-2)$ and a bordered matrix 
$B_n$ (defined in Lemma \ref{modify Tridiag}).	

We need the following result, which gives the determinant of a tridiagonal 
matrix.
\begin{theorem}[\hspace{1sp}\cite{Zhang}, Thm. 5.5]\label{det tridiagonal}
	Let $T_n{(a,b,c)}$ be a tridiagonal matrix of order $n$, 
	where $a,b$ and $c$ 
	are real numbers. If $a^2 \neq 
	4bc$, then
	\begin{equation*}
		\det\big(T_n({a,b,c})\big)= 
		\frac{\alpha^{n+1}-\beta^{n+1}}{\alpha-\beta}~
		\text{where}~\alpha= \frac{a+\sqrt{a^2-4bc}}{2}~ \text{and}~\beta= 
		\frac{a-\sqrt{a^2-4bc}}{2}.
	\end{equation*}
\end{theorem}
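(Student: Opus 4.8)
The plan is to reduce the determinant to a second-order linear recurrence and then solve that recurrence in closed form. Writing $D_n := \det\big(T_n(a,b,c)\big)$, the first step is a cofactor (Laplace) expansion along the last row. Since the only nonzero entries in that row are the subdiagonal $c$ in position $(n,n-1)$ and the diagonal $a$ in position $(n,n)$, the expansion collapses to just two terms. The $a$-term contributes $a\,D_{n-1}$, since deleting the last row and column leaves $T_{n-1}(a,b,c)$. The $c$-term, after a further one-step expansion to clear the lone $b$ it creates, contributes $-bc\,D_{n-2}$. This gives the recurrence
\[
D_n = a\,D_{n-1} - bc\,D_{n-2}, \qquad n \geq 2,
\]
with the natural initial data $D_0 = 1$ (the empty determinant) and $D_1 = a$; one checks the base cases $D_2 = a^2 - bc$ and $D_3 = a^3 - 2abc$ against this for confidence.

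Next I would treat this as a homogeneous linear recurrence with constant coefficients, whose characteristic polynomial is $x^2 - a x + bc$. Its roots are exactly $\alpha$ and $\beta$ as defined in the statement, and Vieta's relations give $\alpha + \beta = a$ and $\alpha\beta = bc$. Here the hypothesis $a^2 \neq 4bc$ enters crucially: it says the discriminant $a^2 - 4bc$ is nonzero, so $\alpha \neq \beta$ and in particular $\alpha - \beta = \sqrt{a^2 - 4bc} \neq 0$, which is precisely what makes the claimed closed form well defined.

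Because the two roots are distinct, the general solution is $D_n = A\alpha^n + B\beta^n$ for scalars $A,B$ fixed by the initial conditions. Imposing $D_0 = A + B = 1$ and $D_1 = A\alpha + B\beta = a = \alpha + \beta$ and solving this $2\times 2$ system yields $A = \alpha/(\alpha - \beta)$ and $B = -\beta/(\alpha - \beta)$. Substituting back and simplifying gives
\[
D_n = \frac{\alpha\cdot\alpha^n - \beta\cdot\beta^n}{\alpha - \beta} = \frac{\alpha^{n+1} - \beta^{n+1}}{\alpha - \beta},
\]
which is the asserted formula.

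I do not expect a genuine obstacle here; the argument is entirely standard. The only points requiring care are bookkeeping rather than ideas: getting the sign and the $bc$ coefficient correct when the subdiagonal expansion is carried out (the step that turns a single $c$ and a single $b$ into the $-bc$ factor), and confirming the recurrence is valid down to the chosen base cases so that the constants $A,B$ are pinned down unambiguously. The distinctness of $\alpha$ and $\beta$, guaranteed by $a^2 \neq 4bc$, is what licenses the two-term ansatz; the confluent case $a^2 = 4bc$ is correctly excluded by hypothesis, since it would instead require a solution of the form $(A + Bn)\alpha^n$.
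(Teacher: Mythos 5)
Your proof is correct. Note, however, that the paper does not prove this statement at all: it is quoted verbatim as Theorem 5.5 of Zhang's \emph{Matrix Theory} and used as a black box, so there is no in-paper argument to compare against. Your recurrence approach is the standard proof of that textbook result: the cofactor expansion along the last row (with the follow-up expansion clearing the lone $b$) correctly yields $D_n = a D_{n-1} - bc\, D_{n-2}$, the hypothesis $a^2 \neq 4bc$ is invoked exactly where it is needed (distinctness of the characteristic roots), and the initial conditions $D_0 = 1$, $D_1 = a$ pin down the coefficients to give $D_n = (\alpha^{n+1}-\beta^{n+1})/(\alpha-\beta)$. One point worth making explicit in a write-up: when $a^2 - 4bc < 0$ the roots $\alpha, \beta$ are complex conjugates, and your argument should be read over $\mathbb{C}$; this is not a gap, since the theory of linear recurrences with distinct characteristic roots holds there verbatim, but it is the case that actually matters downstream --- the paper applies the theorem to $T_n(1,1,1)$, where $a^2 - 4bc = -3$, and the quotient of complex quantities collapses to the real value $2\sin\big((n+1)\pi/3\big)/\sqrt{3}$.
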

In the next two lemmas, we evaluate the determinants of two matrices which will 
be used to find the $\det(E(W_n)$).
\begin{lemma}\label{our tridiag}
	Let  $T_n=T_n{(-2,-2,-2)}$ be a tridiagonal matrix of order $n$. Then
	\begin{equation*}
		\det(T_n)= 
		\begin{cases}
			2^n& \text{if } n\equiv0\Mod3,\\
			-2^n& \text{if } n\equiv1\Mod3,\\
			0& \text{if } n\equiv2\Mod3.\\
		\end{cases}
	\end{equation*}
\end{lemma}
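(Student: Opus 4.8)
The plan is to apply Theorem \ref{det tridiagonal} directly. For $T_n = T_n(-2,-2,-2)$ we have $a=b=c=-2$, so $a^2 = 4 \neq 16 = 4bc$, and the hypothesis $a^2 \neq 4bc$ is satisfied. First I would compute the two roots $\alpha = \frac{-2 + \sqrt{-12}}{2} = -1 + i\sqrt{3}$ and $\beta = -1 - i\sqrt{3}$. The key observation is that both have modulus $2$ and admit the polar forms $\alpha = 2\,e^{2\pi i/3}$ and $\beta = 2\,e^{-2\pi i/3}$, since $\cos\frac{2\pi}{3} = -\frac12$ and $\sin\frac{2\pi}{3} = \frac{\sqrt{3}}{2}$.

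Next I would substitute into $\det(T_n) = \frac{\alpha^{n+1} - \beta^{n+1}}{\alpha - \beta}$. By De Moivre, $\alpha^{n+1} - \beta^{n+1} = 2^{n+1}\bigl(e^{2(n+1)\pi i/3} - e^{-2(n+1)\pi i/3}\bigr) = 2^{n+1}\cdot 2i\,\sin\frac{2(n+1)\pi}{3}$, while $\alpha - \beta = 2i\sqrt{3}$. This yields the compact closed form
\begin{equation*}
	\det(T_n) = \frac{2^{n+1}}{\sqrt{3}}\,\sin\frac{2(n+1)\pi}{3}.
\end{equation*}

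Finally I would evaluate the sine by reducing $n+1$ modulo $3$: the value $\sin\frac{2k\pi}{3}$ equals $0$, $\frac{\sqrt{3}}{2}$, or $-\frac{\sqrt{3}}{2}$ according as $k \equiv 0, 1, 2 \Mod{3}$. Hence the cases $n \equiv 0, 1, 2 \Mod{3}$ (equivalently $n+1 \equiv 1, 2, 0$) give $\det(T_n) = 2^n,\ -2^n,\ 0$ respectively, which is exactly the claim. There is no genuine obstacle here; the only care required is tracking the shift from $n$ to $n+1$ in the exponent and reading off the correct sine values in each residue class. As an alternative that avoids complex numbers, one could instead verify the formula by strong induction using the standard tridiagonal recurrence $\det(T_n) = -2\det(T_{n-1}) - 4\det(T_{n-2})$ with base cases $\det(T_1) = -2$ and $\det(T_2) = 0$; a short check shows the three stated values satisfy this recurrence periodically, so induction closes the argument.
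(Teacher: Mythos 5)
Your proposal is correct and follows essentially the same route as the paper: both invoke Theorem \ref{det tridiagonal} to get a closed form $\frac{\alpha^{n+1}-\beta^{n+1}}{\alpha-\beta}$, rewrite it via De Moivre as a sine evaluated at multiples of $\pi/3$ (the paper) or $2\pi/3$ (yours), and finish with a residue-mod-$3$ case analysis. The only cosmetic difference is that the paper first factors $\det(T_n)=(-2)^n\det\big(T_n(1,1,1)\big)$ so that the roots lie on the unit circle, whereas you apply the theorem directly to $T_n(-2,-2,-2)$ with roots $2e^{\pm 2\pi i/3}$; your inductive alternative via $\det(T_n)=-2\det(T_{n-1})-4\det(T_{n-2})$ is also valid and would avoid complex numbers entirely.
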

\begin{proof}
	We have $\det(T_n)= (-2)^n ~\det(T_n{(1,1,1)})$. Using Theorem 
	\ref{det tridiagonal}, we get
	\begin{eqnarray*}
		\det(T_n)&=&  (-2)^n\Bigg( \frac{
			\Big(\frac{1+i\sqrt{3}}{2}\Big)^{n+1}-\Big(\frac{1-i\sqrt{3}}{2}\Big)^{n+1}}
		{i\sqrt{3}}\Bigg).
	\end{eqnarray*}
	Since  $\Big(\frac{1+i\sqrt{3}}{2}\Big)^{n+1} =
	\Big(\cos\frac{\pi}{3}+i\sin\frac{\pi}{3}\Big)^{n+1}=
	\cos(n+1)\frac{\pi}{3}+i\sin(n+1)\frac{\pi}{3}$, we have
	\begin{equation}\label{a}
		\det(T_n)=
		(-2)^n 	\Bigg(\frac{2 \sin \Big({(n+1)  
			}\frac{\pi}{3}\Big)}
		{\sqrt{3}}\Bigg).
	\end{equation}
	Note that 
	\begin{equation}\label{b}
		\sin \Big({(n+1)  }\frac{\pi}{3}\Big)= \begin{cases}
			(-1)^k\frac{\sqrt{3}}{2}& \text{if } n=3k ~ \text{or}~n=3k+1,\\
			0& \text{if } n=3k+2.
		\end{cases}
	\end{equation}
	The result follows by substituting (\ref{b}) in (\ref{a}). 
\end{proof}
Let $\alpha$ be a non-zero number and $i\neq j$. The addition of $\alpha$ times 
row $j$ (column $j$) of a matrix $A$ to row $i$ (column $i$) of $A$ is denoted 
by $R_i\rightarrow R_i+ \alpha R_j$ ($C_i\rightarrow C_i+ \alpha C_j$). Note 
that the effect of this operation does not change the determinant of $A$.
\begin{lemma}\label{modify Tridiag}
	Let $T_n$ be defined as in Lemma $\ref{our tridiag}$. Let  
	$B_n= 
	\begin{bmatrix}
		0 &\mathbf{e^{\prime}}\\[3pt]
		\mathbf{e}& T_{n-1}
	\end{bmatrix} $
	be a square matrix of order $n$. Then
	\begin{equation*}
		\det(B_n)= 
		\begin{cases}
			0& \text{if } n\equiv0\Mod3,\\
			2^{n-2}(\frac{n-1}{3})& \text{if } n\equiv1\Mod3,\\
			-2^{n-2}(\frac{n+1}{3})& \text{if } n\equiv2\Mod3.\\
		\end{cases}
	\end{equation*}
\end{lemma}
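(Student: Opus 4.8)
The plan is to treat $B_n$ as a bordered matrix and apply the Schur complement. Writing $B_n=\left[\begin{smallmatrix}0&\mathbf{e}'\\ \mathbf{e}&T_{n-1}\end{smallmatrix}\right]$ and expanding by cofactors along the first row and first column gives the bordered-determinant identity $\det(B_n)=-\mathbf{e}'\operatorname{adj}(T_{n-1})\mathbf{e}$, which is a polynomial identity in the entries and therefore holds whether or not $T_{n-1}$ is invertible. Since Lemma~\ref{our tridiag} shows that $T_{n-1}$ is singular exactly when $n-1\equiv 2\Mod3$, i.e. $n\equiv 0\Mod3$, the argument splits into the singular case $n\equiv0\Mod3$ and the two nonsingular cases.

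For the singular case $n\equiv 0\Mod3$ I would compute nothing and instead exhibit a nonzero null vector of $B_n$ directly. Because $T_{n-1}=-2\,T_{n-1}(1,1,1)$, a null vector of $T_{n-1}$ solves the three-term recurrence $z_{i-1}+z_i+z_{i+1}=0$ with the virtual boundary conditions $z_0=z_{n}=0$; the periodic pattern $\mathbf{v}=(1,-1,0,1,-1,0,\dots)'$ satisfies these precisely when $n-1\equiv2\Mod3$, and its entries sum to $0$ over each length-$3$ block, so $\mathbf{e}'\mathbf{v}=0$. Consequently $(0,\mathbf{v}')'$ is annihilated by both the border row and the block rows of $B_n$, whence $\det(B_n)=0$.

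For $n\not\equiv0\Mod3$ the matrix $T_{n-1}$ is invertible and the Schur complement reduces the problem to a scalar, $\det(B_n)=-\det(T_{n-1})\,\big(\mathbf{e}'T_{n-1}^{-1}\mathbf{e}\big)$. I would evaluate $\mathbf{e}'T_{n-1}^{-1}\mathbf{e}=\mathbf{e}'\mathbf{z}$ by solving $T_{n-1}\mathbf{z}=\mathbf{e}$, equivalently $z_{i-1}+z_i+z_{i+1}=-\tfrac12$ with $z_0=z_{n}=0$. The homogeneous solutions are periodic of period $3$ (the characteristic roots are the primitive cube roots of unity) and a constant particular solution is $-\tfrac16$; fitting the two boundary conditions pins down the periodic part, and summing the resulting entries gives $\mathbf{e}'\mathbf{z}=-\tfrac{n-1}{6}$ when $n\equiv1\Mod3$ and $\mathbf{e}'\mathbf{z}=-\tfrac{n+1}{6}$ when $n\equiv2\Mod3$. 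Substituting these together with $\det(T_{n-1})=2^{n-1}$ (for $n\equiv1$) and $\det(T_{n-1})=-2^{n-1}$ (for $n\equiv2$) from Lemma~\ref{our tridiag} yields $2^{n-2}\tfrac{n-1}{3}$ and $-2^{n-2}\tfrac{n+1}{3}$, respectively.

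The only genuine work lies in the last paragraph: correctly imposing the two virtual boundary conditions $z_0=z_{n}=0$ on the period-$3$ solution and tracking which residue class mod $3$ carries the nonzero entries, since this is exactly where the two nonsingular cases diverge. Everything else is a one-line cofactor expansion followed by a counting argument. (One could alternatively unify all three cases through the rank-one structure of $\operatorname{adj}(T_{n-1})$ in the singular case, but exhibiting the explicit null vector is quicker.)
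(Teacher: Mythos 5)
Your proposal is correct, and it takes a genuinely different route from the paper. The paper proves this lemma by induction on $n$: after the operations $R_3\rightarrow R_3-R_2$, $C_3\rightarrow C_3-C_2$ and several cofactor expansions, it derives the recurrence $\det(B_n)=4\det(T_{n-4})+8\det(B_{n-3})$ and settles the three residue classes via Lemma \ref{our tridiag} and the induction hypothesis. You instead use the bordered-determinant identity $\det(B_n)=-\mathbf{e}^{\prime}\operatorname{adj}(T_{n-1})\mathbf{e}$ (equivalently the Schur complement), dispose of the singular case $n\equiv0\Mod3$ with an explicit null vector of $B_n$, and in the nonsingular cases reduce everything to solving $T_{n-1}\mathbf{z}=\mathbf{e}$, i.e.\ the recurrence $z_{i-1}+z_i+z_{i+1}=-\tfrac12$ with $z_0=z_n=0$. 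Your claimed values check out: for $n\equiv1\Mod3$ the solution is $z_i=-\tfrac12$ exactly when $i\equiv2\Mod3$ and zero otherwise, giving $\mathbf{e}^{\prime}\mathbf{z}=-\tfrac{n-1}{6}$; for $n\equiv2\Mod3$ it is $z_i=-\tfrac12$ exactly when $i\equiv1\Mod3$, giving $\mathbf{e}^{\prime}\mathbf{z}=-\tfrac{n+1}{6}$; combined with $\det(T_{n-1})=2^{n-1}$ and $-2^{n-1}$ respectively (Lemma \ref{our tridiag}), these yield precisely $2^{n-2}\tfrac{n-1}{3}$ and $-2^{n-2}\tfrac{n+1}{3}$, and your period-three null vector indeed satisfies the last block row exactly when $n-1\equiv2\Mod3$. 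As for trade-offs: the paper's induction is elementary and stylistically matches the recurrence argument it reuses for Theorem \ref{wheel determinant}, but it requires knowing the closed form in advance to set up the induction hypothesis; your argument derives the formula directly without induction, and produces structural by-products --- the explicit kernel vector is in the same spirit as the period-three null vectors the paper later exploits in Theorem \ref{rank of wn}, and the explicit $T_{n-1}^{-1}\mathbf{e}$ is potentially reusable --- at the modest cost of justifying the adjugate identity (a polynomial identity, or Schur complement plus a density argument) and carefully fitting the two boundary conditions, which you correctly flag as the only delicate step.
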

\begin{proof}
	We prove the result by induction on $n$. It is easy to see that the 
	determinants of $B_2, B_3$ and $B_4$ are $-1,0$ and $4$ respectively.
	
	Let $n \geq 5$. Suppose that the result is true for all positive integers 
	less than $n$. By performing the row operation $R_3\rightarrow R_3-R_2$
	first and then the column operation $C_3\rightarrow 
	C_3-C_2$ on $B_n$, we get
	\begin{equation*}
		\det(B_n)=
		\left \vert\begin{array}{cccc|cccc}
			0&1&0&&1& 1& \cdots&1\\[3pt] 
			1&-2&0&&0&0&\cdots&0 \\[3pt]
			0&0&0&&-2&0&\cdots&0 \\[3pt]\hline
			{1}&0&-2&\\
			\mathbf{e}&\mathbf{0}&\mathbf{0}&&&&T_{n-3}
		\end{array}\right \vert.
	\end{equation*}
	We first expand $\det(B_n)$ along the second row. Thus,
	\begin{eqnarray*}
		\det(B_n)&=& 
		(-1)
		\left \vert \begin{array}{ccc|cccc}
			1&0&&1& 1& \cdots&1\\[3pt] 
			0&0&&-2&0&\cdots&0 \\[3pt]\hline
			{0}&-2&\\[3pt]
			\mathbf{0}&\mathbf{0}&&&&T_{n-3}
		\end{array}\right \vert
		+(-2)  \left \vert\begin{array}{ccc|cccc}
			0&0&&1& 1& \cdots&1\\[3pt] 
			0&0&&-2&0&\cdots&0 \\[3pt]\hline
		{1}&-2&\\[3pt]
			\mathbf{e}&\mathbf{0}&&&&T_{n-3}
		\end{array}\right \vert.
	\end{eqnarray*}
	Next expanding the above determinants along the second column, and the 
	resulting determinants along the second row, we have
	\begin{eqnarray*}
		\det(B_n)&=& 
		(-1)(2)
		\left \vert\begin{array}{ccc|c}
			1&1&&\mathbf{e}^{\prime}\\[3pt] 
			0&-2&&\mathbf{0}^{\prime} \\\hline
			{0}&-2&\\[3pt]
			\mathbf{0}&\mathbf{0}&&T_{n-4}
		\end{array}\right \vert
		+(-2)(2) \left \vert\begin{array}{ccc|c}
			0&1&&\mathbf{e}^{\prime}\\[3pt] 
			0&-2&&\mathbf{0}^{\prime} \\\hline
		{1}&-2&\\[3pt]
			\mathbf{e}&\mathbf{0}&&T_{n-4}
		\end{array}\right \vert\\[6pt]
		&=& (-2)(-2)
		\left \vert\begin{array}{ccc}
			1&& \mathbf{e}^{\prime}\\[3pt]
			\mathbf{0}&&T_{n-4}
		\end{array}\right \vert 
		+ (-4)(-2) ~\det(B_{n-3}).
	\end{eqnarray*}
	Finally, expand the first determinant along the first column, we obtain the 
	recursive 
	formula
	\begin{equation}\label{recurrence relation}
		\det(B_n)= 4  ~\det(T_{n-4}) + 8  ~\det(B_{n-3}).
	\end{equation}
	
	{\it{Case{(i)}}}: Let $n\equiv0\Mod3$. Then, $n-3\equiv0\Mod3$ and 
	$n-4\equiv2\Mod3$. From Lemma \ref{our tridiag} and by induction 
	hypothesis, it follows that $~\det(T_{n-4})=~\det(B_{n-3})=0$. 
	Hence $~\det(B_n)=0$.
	
	{\it{Case{(ii)}}}:
	Suppose $n\equiv1\Mod3$. Then, $n-3\equiv1\Mod3$ and 
	$n-4\equiv0\Mod3$. By Lemma \ref{our tridiag}, we have
	$~\det(T_{n-4})=2^{n-4}$. Using the 
	recurrence relation (\ref{recurrence relation})
	and the induction hypothesis, we get 
	\begin{eqnarray*}
		\det(B_n) = 4(2^{n-4}) + 
		8\bigg(2^{n-5}\Big(\frac{n-4}{3}\Big)\bigg) =	
		2^{n-2}\Big(\frac{n-1}{3}\Big).
	\end{eqnarray*}
	
	{\it{Case{(iii)}}}:
	If $n\equiv2\Mod3$, then the proof is similar to that 
	of {\it{Case{(ii)}}}.
\end{proof}
Later in Remark \ref{wheel graph remark2}, we will show that $\det(B_n$) is 
equal 
to $\det(W_n-e$), where $W_n-e$ is a graph obtained from $W_n$ by deleting an 
edge $e$ which lies on the cycle.

The main result of this section is the following theorem, which gives the 
determinant of $E(W_n)$ in terms of 
the number of vertices of $W_n$.
\begin{theorem}\label{wheel determinant}
	Let $n\geq 5$.  Then \begin{equation*}
		\det(E(W_n))= 
		\begin{cases}
			2^{n-2}(1-n)& \text{if } n\not\equiv1\Mod3,\\
			0& \text{if } n\equiv1\Mod3.
		\end{cases}
	\end{equation*}
\end{theorem}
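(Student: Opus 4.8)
The plan is to reduce $E(W_n)$ to a bordered tridiagonal-plus-corner matrix and then peel off the two cyclic corners that make the rim block a circulant rather than a tridiagonal. Starting from the splitting $E(W_n)=\left[\begin{smallmatrix}0&\mathbf e'\\ \mathbf e&2J_{n-1}\end{smallmatrix}\right]-2\left[\begin{smallmatrix}0&\mathbf 0'\\ \mathbf 0&F\end{smallmatrix}\right]$ in (\ref{E splitted form}), I would first subtract twice the hub row (row $1$) from every rim row. Since each rim row of $2J_{n-1}$ equals $2\mathbf e'$ while the hub row is $(0,\mathbf e')$, this kills the rank-one block $2J_{n-1}$ and leaves the first column unchanged, so that
\[
\det(E(W_n))=\det\begin{bmatrix}0&\mathbf e'\\ \mathbf e&-2F\end{bmatrix}=:\det(G_n).
\]
Because $F=\cir((1,1,0,\dots,0,1))$ carries ones on the diagonal, on both off-diagonals, and in the two cyclic corners, the block $-2F$ agrees with $T_{n-1}:=T_{n-1}(-2,-2,-2)$ except for the two entries $-2$ in positions $(2,n)$ and $(n,2)$ of $G_n$. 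Hence $G_n=B_n-2(\mathbf e_2\mathbf e_n'+\mathbf e_n\mathbf e_2')$, where $B_n$ is exactly the bordered matrix of Lemma \ref{modify Tridiag}.

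Next I would expand $\det(G_n)$ as a multilinear function of the two corner entries, which lie in distinct rows and distinct columns. Treating them as independent variables, the determinant is affine in each, giving the recurrence
\[
\det(E(W_n))=\det(B_n)-4(-1)^{n}\det\!\big(B_n(\hat 2,\hat n)\big)-4\det(B_{n-2}),
\]
where $B_n(\hat 2,\hat n)$ denotes $B_n$ with row $2$ and column $n$ deleted. Here the first-order term is the corner cofactor $(-1)^{2+n}\det(B_n(\hat 2,\hat n))$, counted twice by the symmetry of $B_n$, and the coefficient of the product of the two corner entries is the signed principal minor obtained by deleting rows and columns $2$ and $n$; that deletion leaves precisely $\bigl[\begin{smallmatrix}0&\mathbf e'\\ \mathbf e&T_{n-3}\end{smallmatrix}\bigr]=B_{n-2}$, and the permutation-sign computation shows this coefficient equals $-\det(B_{n-2})$. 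This is the relation tying $\det(E(W_n))$ to the quantities of Lemmas \ref{our tridiag} and \ref{modify Tridiag}.

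The one genuinely new quantity is the off-diagonal minor $\det(B_n(\hat 2,\hat n))$. After deleting row $2$ and column $n$, the rim block becomes the submatrix of $T_{n-1}$ on internal rows $2,\dots,n-1$ and columns $1,\dots,n-2$; this is upper triangular with $-2$ on its diagonal and $-2$ on its next two superdiagonals, so I can write it as $S=-2(I+N+N^{2})$ with $N$ the nilpotent upper shift and $\det S=(-2)^{n-2}$. Using $\det\bigl[\begin{smallmatrix}0&\mathbf e'\\ \mathbf e&S\end{smallmatrix}\bigr]=-\mathbf e'\operatorname{adj}(S)\mathbf e=-\det(S)\,\mathbf e'S^{-1}\mathbf e$ together with the telescoping identity $\mathbf e'(I-N)=\mathbf e_1'$, the scalar $\mathbf e'S^{-1}\mathbf e$ collapses to $-\tfrac12$ times the number $N_1$ of indices in $\{1,\dots,n-2\}$ that are $\equiv 1\pmod 3$. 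Consequently $\det(B_n(\hat 2,\hat n))=(-1)^{n}2^{n-3}N_1$, where $N_1=n/3,\ (n-1)/3,\ (n-2)/3$ according as $n\equiv 0,1,2\pmod 3$.

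Finally I would substitute the closed forms of Lemma \ref{modify Tridiag} for $\det(B_n)$ and $\det(B_{n-2})$ together with the value of $N_1$ into the displayed recurrence, splitting into the three residue classes of $n\bmod 3$. In the classes $n\equiv 0$ and $n\equiv 2$ the three contributions combine to $2^{n-2}(1-n)$, while for $n\equiv 1$ the common factor $\tfrac{n-1}{3}$ is multiplied by $2^{n-2}-2^{n-1}+2^{n-2}=0$, so the determinant vanishes; this yields the theorem. I expect the main obstacle to be precisely the two cyclic corners: they are what separates $E(W_n)$ from a tractable bordered-tridiagonal determinant, and dealing with them honestly forces the rank-two expansion above. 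Getting the sign of the second-order cofactor right and evaluating the off-diagonal minor $\det(B_n(\hat 2,\hat n))$ — rather than the routine substitution at the very end — is where the real work lies.
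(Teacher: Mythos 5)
Your proposal is correct --- I checked the sign bookkeeping in the rank-two expansion, the evaluation of the off-diagonal minor, and the three residue-class substitutions, and all of them hold --- but it follows a genuinely different route through the middle of the argument. The two proofs share their first and last steps: both subtract twice the hub row from every rim row to reduce to $\det\bigl[\begin{smallmatrix}0&\mathbf{e}'\\ \mathbf{e}&-2F\end{smallmatrix}\bigr]$, and both finish by substituting the closed forms of Lemmas \ref{our tridiag} and \ref{modify Tridiag} into a recurrence. They differ in how the two cyclic corner entries are eliminated. The paper exploits the constant column sums of the circulant block: it adds all later rows (columns) to row (column) $2$, clears the border via $R_2\rightarrow R_2+6R_1$ and $C_2\rightarrow C_2+6C_1$, and lands on the recurrence $\det(E(W_n))=-(n-1)^2\det(T_{n-2})+6(n-1)\det(B_{n-1})$ in (\ref{recurrence relation2}), which the two lemmas resolve immediately. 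You instead treat the corners as a symmetric rank-two perturbation, writing the reduced matrix as $B_n-2(\mathbf{e}_2\mathbf{e}_n'+\mathbf{e}_n\mathbf{e}_2')$, and expand multilinearly; this yields a different recurrence involving $\det(B_n)$, $\det(B_{n-2})$, and one object the paper never needs, the off-diagonal bordered minor $\det(B_n(\hat 2,\hat n))$, whose evaluation (triangular banded block $S=-2(I+N+N^2)$, Schur complement, and the nilpotent identities $\mathbf{e}'(I-N)=\mathbf{e}_1'$, $(I+N+N^2)^{-1}=(I-N)(I-N^3)^{-1}$) is the genuinely new content of your proof. The trade-off: the paper's aggregation trick is shorter and stays entirely inside elementary row/column operations, but the $+6$ multipliers have to be guessed; your rank-two update is more systematic, isolates exactly where ``circulant'' differs from ``tridiagonal,'' and the corner-cofactor computation would be reusable for other circulant-cornered matrices, at the cost of extra sign bookkeeping and one additional minor evaluation.
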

\begin{proof}
	From (\ref{eccentricity matrix of wheel 
		graph}), we have $\det(E(W_n))=
	\begin{vmatrix}
		0 &\mathbf{e^{\prime}}\\[3pt]
		\mathbf{e}& \widetilde{E}\\
	\end{vmatrix}$.
	
	For each $ i =2,3,\cdots,n$, perform the row operations $R_i\rightarrow 
	R_i-2R_1$ on $E(W_n)$, we get
	\begin{eqnarray*}
		\det(E(W_n))&=&   \left \vert\begin{array}{ccc|ccccc}
			0&1&&1& 1& \cdots&1&1\\[3pt] 
			1&-2&&-2&0&\cdots&0 &-2\\[3pt]\hline
		{1}&-2&&&& \\[3pt]
			1&0&\\[3pt]
			\vdots&\vdots&&&&T_{n-2}\\
			1&0&&&\\[3pt]
			1&-2&&
		\end{array}\right \vert.
	\end{eqnarray*}
	We first add the third, fourth, $\cdots$, $n$-th rows to the second 
	row and then do the similar operations for columns, we get
	\begin{eqnarray*}
		\det(E(W_n))&=&   	\left \vert\begin{array}{ccc|c}
			0&n-1&&\mathbf{e}^{\prime}\\[3pt] 
			n-1&-6(n-1)&&-6\mathbf{e}^{\prime}\\[3pt]\hline
		{\mathbf{e}}&-6\mathbf{e}&&T_{n-2}
		\end{array}\right \vert. 
	\end{eqnarray*}
	By performing the row operation $R_2\rightarrow R_2+6R_1$ first and then 
	the 
	column operation $C_2\rightarrow C_2+6C_1$, we get
	\begin{eqnarray*}
		\det(E(W_n))&=&   \left \vert\begin{array}{ccc|c}
			0&n-1&&\mathbf{e}^{\prime}\\[3pt] 
			n-1&6(n-1)&&\mathbf{0}^{\prime}\\[3pt]\hline
			{\mathbf{e}}&\mathbf{0}&&T_{n-2}
		\end{array}\right \vert.  
	\end{eqnarray*}
	Expanding the above determinant along the second row, we have
	\begin{eqnarray*}
		\det(E(W_n))&=&  (-1)(n-1) \left \vert\begin{array}{cc}
			n-1&\mathbf{e}^{\prime}\\[3pt]
			\mathbf{0}&T_{n-2}
		\end{array}\right \vert+ 6(n-1)\det(B_{n-1}).
	\end{eqnarray*}
	Then expand the first determinant along the first column, we get the 
	recursive 
	formula
	\begin{eqnarray}\label{recurrence relation2}
		\det(E(W_n))&=&  (-1)(n-1)^2~ \det(T_{n-2})+ 
		6(n-1)\det(B_{n-1}).
	\end{eqnarray}

	{\it{Case{(i)}}}: 	Let $n\equiv0\Mod3$. Then $n-1\equiv2\Mod3$ 
	and $n-2\equiv1\Mod3$. Using Lemmas \ref{our tridiag} and \ref{modify 
		Tridiag} in the above
	recurrence relation (\ref{recurrence relation2}), we get 
	\begin{eqnarray*}
		\det(E(W_n))&=& 
		(-1) 
		(n-1)^2(-2^{n-2})+6(n-1)\Big(-2^{n-3}~\frac{n}{3}\Big)=2^{n-2}(1-n).
	\end{eqnarray*}
	
	{\it{Case{(ii)}}}:
	Suppose $n\equiv1\Mod3$. Then, 
	$n-1\equiv0\Mod3$ 
	and $n-2\equiv2\Mod3$. Again, from Lemmas \ref{our tridiag} and  
	\ref{modify 
		Tridiag}, we have $\det(T_{n-2})=\det(B_{n-1})=0$. Hence, 
	$\det(E(W_n))= 0$.
	
	{\it{Case{(iii)}}}: If $n\equiv2\Mod3$, then the proof is similar to that 
	of 
	{\it{Case{(i)}}}.
\end{proof}
As an immediate consequence of Theorem \ref{wheel determinant}, we have the 
following result.
\begin{theorem}\label{invertible}
	Let $n\geq5$. Then $E(W_n)$ is invertible if and only if $n 
	\not\equiv1\Mod3$.
\end{theorem}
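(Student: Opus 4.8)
The plan is to read the result straight off the determinant formula established in Theorem~\ref{wheel determinant}, invoking the elementary fact that a square matrix over $\mathbb{R}$ is invertible precisely when its determinant is nonzero. Since $E(W_n)$ is an $n \times n$ real matrix, it suffices to decide, for each residue class of $n$ modulo $3$, whether $\det(E(W_n))$ vanishes.

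First I would dispose of one direction: when $n\equiv1\Mod3$, Theorem~\ref{wheel determinant} gives $\det(E(W_n))=0$, so $E(W_n)$ is singular and hence not invertible. For the converse, suppose $n\not\equiv1\Mod3$. Then the same theorem yields $\det(E(W_n))=2^{n-2}(1-n)$. The factor $2^{n-2}$ is plainly nonzero, and since $n\geq5$ we have $1-n\leq-4<0$, so $1-n\neq0$ as well. Hence the product is nonzero and $E(W_n)$ is invertible. Combining the two cases gives the claimed equivalence $E(W_n)$ is invertible if and only if $n\not\equiv1\Mod3$.

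Since the entire arithmetic burden has already been discharged in Theorem~\ref{wheel determinant}, there is no genuine obstacle here: the only thing to check is that the factor $1-n$ appearing in the nonsingular case does not accidentally vanish, and this is guaranteed by the standing hypothesis $n\geq5$. Thus the statement is a direct corollary, and I would present it as such.
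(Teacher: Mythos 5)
Your proposal is correct and matches the paper exactly: the paper states this theorem as an immediate consequence of Theorem~\ref{wheel determinant}, which is precisely your argument of checking that $2^{n-2}(1-n)\neq 0$ when $n\not\equiv 1\Mod{3}$ and $n\geq 5$. Nothing further is needed.
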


\section{Inertia of $E(W_n)$}
Let us recall that for a real symmetric matrix $A$ of order $n$, the 
\textit{inertia} of 
$A$, denoted by 
$\In(A)$, is the ordered triple $(i_+(A), i_{-}(A), i_0(A))$, where $i_+(A)$, 
$i_-(A)$  and  $i_0(A)$ respectively denote the number of positive, 
negative, and 
zero eigenvalues of $A$ including the multiplicities. It is well known that 
$i_+(A)+i_{-}(A)$ is equal to the rank of $A$.

The inertias of the distance matrices of weighted trees and unicyclic graphs 
(i.e., connected graphs 
containing exactly one cycle) have been studied in 
\cite{Bapat2}. It has been shown in \cite{Wheel related graph} that
\begin{equation*}
	\In(D(W_n))= 
	\begin{cases}
		(1,n-1,0)& \text{if $n$ is even},\\
		(1,n-2,1)& \text{if $n$ is odd}.
	\end{cases}
\end{equation*}
From this result, it is evident that $D(W_n)$ has exactly one positive 
eigenvalue which is the spectral radius because $D(W_n)\geq 0$. The inertias of 
the eccentricity 
matrices of paths and lollipop graphs were investigated in \cite{rajes1}. 

In this section, we compute the inertia of the eccentricity matrix of the 
wheel graph  using the notion of interlacing 
property. We also obtain the determinant and the 
inertia of $E(W_n-e)$, where $W_n-e$ is the subgraph 
obtained from the wheel graph $W_n$ by deleting an edge $e$ which lies on 
the cycle.

We begin with an observation that the
eccentricity matrix $E(W_n)$ of the wheel graph on $n$ vertices is not a 
principal submatrix of $E(W_{n+1})$ as
$\cir(\mathbf{m_1}^{\prime})$ 
is  not a principal submatrix of $\cir(\mathbf{m_2}^{\prime})$, where 
$\mathbf{m_1}=(0,0,2,2,\cdots,2,0)^{\prime}\in \mathbb{R}^{n-1}$ and 
$\mathbf{m_2}=(0,0,2,2,\cdots,2,0)^{\prime}\in \mathbb{R}^{n}.$ 
Interestingly, $E(W_n-e)$ is a
leading principal submatrix of $E(W_{n+1})$. 
Without loss of generality, we assume that $e=v_2v_n$. Then,
\begin{eqnarray*}
	D(W_n-e) =
	\begin{bmatrix}
		0 &\mathbf{e^{\prime}}\\[3pt]
		\mathbf{e}& 2J_{n-1}\\
	\end{bmatrix}-\begin{bmatrix}
		0 &\mathbf{0^{\prime}}\\[3pt]
		\mathbf{0}& T_{n-1}(2,1,1)
	\end{bmatrix}.
\end{eqnarray*}
Note that the eccentricities of the 
vertices with respect to the graphs $W_n$ and $W_n-e$ are the same. So,
\begin{equation*}
	E(W_n-e) = \begin{bmatrix}
		0 &\mathbf{e^{\prime}}\\[3pt]
		\mathbf{e}& 2J_{n-1}\\
	\end{bmatrix}-\begin{bmatrix}
		0 &\mathbf{0^{\prime}}\\[3pt]
		\mathbf{0}& T_{n-1}(2,2,2)
	\end{bmatrix}.
\end{equation*}

As $T_{n-1}(2,2,2)$ is a leading principal 
submatrix of 
$T_{n}(2,2,2)$, it follows that $E(W_n-e)$ is a leading principal submatrix 
of $E(W_{n+1}-e)$.
\begin{remark}
	We observe that \begin{equation*}
		E(W_{n+1}) = \begin{bmatrix}
			0 &\mathbf{e^{\prime}}\\[3pt]
			\mathbf{e}& 2J_n\\
		\end{bmatrix}-\begin{bmatrix}
			0 &\mathbf{0^{\prime}}\\[3pt]
			\mathbf{0}& 2F
		\end{bmatrix},~\text{where}~ 	
		F=\cir\big((1,1,\underbrace{0,\cdots,0}_{\text{(n-3) 
				times}},1)\big).\end{equation*}
	If we delete the last row and the last column of 
	$2F$, the resulting matrix is $T_{n-1}(2,2,2)$. Hence $E(W_n-e)$ is a 
	leading principal 
	submatrix of $E(W_{n+1})$.
\end{remark}
\begin{remark}\label{wheel graph remark2}
	After performing the row operations $R_i\rightarrow 
	R_i-2R_1$ on $E(W_n-e)$ for each $ i =2,3,\cdots,n$,  it is evident that 
	the 
	resultant matrix is just $B_n$ (defined in Theorem $\ref{modify Tridiag}$). 
	Therefore, 
	\begin{equation*}
		\det\big(E(W_{n}-e)\big)=\det(B_n)= 
		\begin{cases}
			0& \text{if } n\equiv0\Mod3,\\
			2^{n-2}(\frac{n-1}{3})& \text{if } n\equiv1\Mod3,\\
			-2^{n-2}(\frac{n+1}{3})& \text{if } n\equiv2\Mod3.\\
		\end{cases}
	\end{equation*}
\end{remark}
We need the following interlacing theorem to prove  the 
inertia of $E(W_n)$.
\begin{theorem}[\hspace{1sp}\cite{Zhang}]\label{Interlacing theorem}
	Let $M=\begin{bmatrix}
		A & B\\[3pt]
		B^{\prime}&C\\
	\end{bmatrix}$ be a symmetric matrix of order $n$, where $A$ is a 
	leading principal submatrix of $M$ of order $m ~(1\leq m \leq n)$. If the 
	eigenvalues of $M$ and $A$ are
	$\lambda_1\geq \lambda_2\geq \cdots \geq\lambda_n$ and $\beta_1\geq 
	\beta_2\geq \cdots \geq
	\beta_m$ respectively, then
	$	\lambda_i\geq \beta_i \geq \lambda_{n-m+i},$ for all $i=1,2,\cdots,m.$
	In particular, when $m=n-1$, we have
	\begin{equation*}
		\lambda_1\geq \beta_1 \geq \lambda_2\geq \beta_2 \geq \lambda_3
		\geq \cdots  \geq \lambda_{n-1}\geq \beta_{n-1} \geq \lambda_n.
	\end{equation*}
	Moreover, $	i_+(M)\geq i_+(A)~ \text{and}~i_-(M)\geq i_-(A).$
\end{theorem}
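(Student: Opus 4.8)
The plan is to prove the interlacing inequalities via the Courant--Fischer variational (min--max) characterization of the eigenvalues of a real symmetric matrix, which is the standard route to a Cauchy-type interlacing result. Recall that for a real symmetric matrix $S$ of order $N$ with eigenvalues $\mu_1 \geq \mu_2 \geq \cdots \geq \mu_N$, the Courant--Fischer theorem gives both
\[
\mu_k = \max_{\dim V = k}\ \min_{\substack{\mathbf{x}\in V\\ \mathbf{x}\neq 0}} \frac{\mathbf{x}^{\prime} S \mathbf{x}}{\mathbf{x}^{\prime}\mathbf{x}} \quad\text{and}\quad \mu_k = \min_{\dim V = N-k+1}\ \max_{\substack{\mathbf{x}\in V\\ \mathbf{x}\neq 0}} \frac{\mathbf{x}^{\prime} S \mathbf{x}}{\mathbf{x}^{\prime}\mathbf{x}},
\]
where $V$ ranges over subspaces of $\mathbb{R}^N$ of the indicated dimension. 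First I would record the key observation that makes the leading-principal-submatrix structure usable: identifying $\mathbb{R}^m$ with the coordinate subspace $U = \{\mathbf{x}\in\mathbb{R}^n : x_{m+1}=\cdots=x_n = 0\}$, every $\mathbf{x}\in\mathbb{R}^m$ satisfies $\mathbf{x}^{\prime}A\mathbf{x} = \mathbf{x}^{\prime}M\mathbf{x}$, since a vector supported on the first $m$ coordinates only meets the $A$-block of $M$.

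Second, I would establish $\beta_i \leq \lambda_i$ using the max--min form. Writing $\beta_i$ through Courant--Fischer applied to $A$ and embedding each $i$-dimensional subspace of $\mathbb{R}^m$ into $\mathbb{R}^n$ via $U$ leaves the quadratic form unchanged, so $\beta_i$ equals a maximum of inner min-ratios for $M$ taken over the restricted family of $i$-dimensional subspaces contained in $U$; enlarging this family to all $i$-dimensional subspaces of $\mathbb{R}^n$ can only raise the outer maximum, which yields $\beta_i \leq \lambda_i$. Symmetrically, I would establish $\lambda_{n-m+i} \leq \beta_i$ from the min--max form: here $\beta_i$ is a minimum of max-ratios over $(m-i+1)$-dimensional subspaces of $U$, whereas $\lambda_{n-m+i}$ is the same quantity minimized over all $(m-i+1)$-dimensional subspaces of $\mathbb{R}^n$ (using $n-(n-m+i)+1 = m-i+1$); shrinking the competing family can only raise the minimum, so $\lambda_{n-m+i}\leq \beta_i$. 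Combining gives $\lambda_i \geq \beta_i \geq \lambda_{n-m+i}$ for all $i$, and specializing to $m=n-1$ produces the fully interlaced chain.

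Finally, the inertia inequalities drop out of the interlacing. If $i_+(A)=p$, then $\beta_1,\dots,\beta_p>0$, and since $\lambda_i\geq\beta_i$ the eigenvalues $\lambda_1,\dots,\lambda_p$ are positive, giving $i_+(M)\geq p=i_+(A)$. Dually, if $i_-(A)=q$, then $\beta_{m-q+1},\dots,\beta_m<0$, and since $\lambda_{n-m+i}\leq\beta_i$, taking $i=m-q+1,\dots,m$ forces $\lambda_{n-q+1},\dots,\lambda_n<0$, whence $i_-(M)\geq q=i_-(A)$. The step I expect to demand the most care is the index bookkeeping: correctly passing between the max--min and min--max forms, verifying the dimension match $n-(n-m+i)+1=m-i+1$, and keeping the embedding $\mathbb{R}^m\hookrightarrow U$ consistent so that the two monotonicity arguments (enlarging versus shrinking the family of competing subspaces) point in the intended directions.
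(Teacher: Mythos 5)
Your proof is correct. Note, however, that the paper does not prove this statement at all: it is quoted as a known result from the cited reference (Zhang, \emph{Matrix Theory}), so there is no in-paper argument to compare against. Your Courant--Fischer route is the standard textbook proof of Cauchy interlacing, and all the delicate points are handled properly: the quadratic form is preserved under the coordinate embedding $\mathbb{R}^m\hookrightarrow U\subseteq\mathbb{R}^n$ because $A$ is a \emph{leading principal} submatrix; the dimension count $n-(n-m+i)+1=m-i+1$ matches the two applications of the min--max form; the monotonicity of max over an enlarged family and of min over an enlarged family point in the right directions; and the inertia inequalities $i_+(M)\geq i_+(A)$, $i_-(M)\geq i_-(A)$ follow correctly from $\lambda_i\geq\beta_i$ (for the positive count) and $\lambda_{n-m+i}\leq\beta_i$ with $i=m-q+1,\dots,m$ (for the negative count).
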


We first obtain the inertia of $E(W_n-e)$ which will be used
to study the inertia of $E(W_n)$.

\begin{lemma}\label{Inertia W-e}
	Let $n\geq 5$. Then
	\begin{equation*}
		\In\big(E(W_n-e)\big)= 
		\begin{cases}
			(\frac{n}{3},\frac{2n-3}{3},1)& \text{if } n\equiv0\Mod3,\\
			(\frac{n+2}{3},\frac{2n-2}{3},0)& \text{if } n\equiv1\Mod3,\\
			(\frac{n+1}{3},\frac{2n-1}{3},0)& \text{if } n\equiv2\Mod3.\\
		\end{cases}
	\end{equation*}
\end{lemma}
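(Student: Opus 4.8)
The plan is to compute the inertia of $E(W_n-e)$ by exploiting the fact, established just above, that $E(W_n-e)$ is a leading principal submatrix of $E(W_{n+1})$ and, more usefully here, that after the row operations $R_i \rightarrow R_i - 2R_1$ it is congruence-related to the bordered tridiagonal matrix $B_n$ whose determinant is already known (Remark \ref{wheel graph remark2}). Since inertia is a congruence invariant but the operations $R_i \rightarrow R_i - 2R_1$ alone are not a symmetric congruence, I would instead work directly with the symmetric form $E(W_n-e) = \left[\begin{smallmatrix} 0 & \mathbf{e}' \\ \mathbf{e} & 2J_{n-1} - T_{n-1}(2,2,2)\end{smallmatrix}\right]$ and set up an induction on $n$ using the interlacing Theorem \ref{Interlacing theorem}, since $E(W_n-e)$ is a leading principal submatrix of $E(W_{n+1}-e)$.

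First I would verify the base cases $n=5,6,7$ (one representative of each residue class mod $3$) by direct computation of the signs of the eigenvalues, pinning down the number of zero eigenvalues from the determinant formula in Remark \ref{wheel graph remark2}: the determinant vanishes precisely when $n \equiv 0 \Mod 3$, which forces at least one zero eigenvalue in that case and none in the other two cases. Next, for the inductive step, I would pass from $E(W_n-e)$ to $E(W_{n+1}-e)$, whose order is one larger, and apply the interlacing inequalities $\lambda_1 \geq \beta_1 \geq \lambda_2 \geq \cdots \geq \beta_{n-1} \geq \lambda_n$ together with the final clause $i_+(M) \geq i_+(A)$ and $i_-(M) \geq i_-(A)$. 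Interlacing alone tells me each of $i_+$ and $i_-$ can increase by at most a controlled amount as the order grows by one; combining this with the determinant (hence $i_0$) from Remark \ref{wheel graph remark2} and the total rank constraint $i_+ + i_- + i_0 = n$ should pin down the triple uniquely in each residue class.

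The arithmetic bookkeeping is the crux: as $n$ runs through the three residues mod $3$, the target values of $i_+$ and $i_-$ increase in a periodic pattern (the positive count increases by one for two consecutive steps and then stalls, with the complementary behaviour for the negative count and the sporadic appearance of a single zero eigenvalue when $3 \mid n$). I would track how $i_+$ and $i_-$ must change when moving from $n$ to $n+1$ across the boundaries of the residue classes, using that $i_0$ is known exactly (zero unless $n \equiv 0$, where it equals one) and that interlacing permits each of $i_+, i_-$ to rise by at most one per unit increase in order. The main obstacle will be showing that interlacing forces the increase to fall on the correct index rather than merely bounding it: when $i_0$ jumps from $0$ up to $1$ (entering a multiple of $3$) and back down to $0$, I must argue that the newly created or destroyed eigenvalue is genuinely a zero eigenvalue and not a sign change, which I would resolve by cross-checking against the exact determinant sign from Remark \ref{wheel graph remark2} rather than relying on interlacing in isolation.

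Finally, I would reconcile the three cases into the stated triples, confirming that $i_+ + i_- + i_0 = n$ and that $i_+ + i_-$ equals the rank predicted by the determinant computation. An alternative, possibly cleaner, route would bypass induction entirely: since $2J_{n-1} - T_{n-1}(2,2,2)$ has a transparent structure, I could diagonalize the full matrix via an eigenvalue analysis of the bordered circulant-like block, but I expect the interlacing argument anchored to the known determinant to be the shorter path and the one least prone to sign errors.
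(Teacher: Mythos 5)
Your proposal is correct and takes essentially the same route as the paper: induction on $n$ via the nesting of $E(W_{n-1}-e)$ as a leading principal submatrix of $E(W_n-e)$, the interlacing Theorem~\ref{Interlacing theorem}, the determinant signs from Remark~\ref{wheel graph remark2}, and the count $i_+ + i_- + i_0 = n$. The only divergence is the case $n\equiv 1\Mod 3$, where the interlacing lower bounds leave a slack of two: the paper closes it by observing that the zero eigenvalue of the submatrix, sandwiched between necessarily nonzero eigenvalues of the larger matrix, pushes both $i_+$ and $i_-$ up by one, while you close it with the parity of $i_-$ forced by the positive determinant --- which likewise selects the unique admissible triple.
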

\begin{proof}
	The proof is by induction on the number of vertices $n$. If $n=5$ then, by 
	Remark \ref{wheel graph remark2}, $\det\big(E(W_{5}-e)\big)$ is 
	negative. Therefore, $i_0\big(E(W_{5}-e)\big)=0$. Consider the matrix
		$B=\left[\begin{smallmatrix}
			0&1&1\\
			1&0&0\\
			1&0&0
		\end{smallmatrix}\right].$
	Then the eigenvalues of $B$ are $\beta_1=\sqrt{2}$, 
	$\beta_2=0$ and $\beta_3=-\sqrt{2}$. Let 	$\lambda_1\geq \lambda_2\geq 
	\lambda_3\geq \lambda_4 \geq\lambda_5$ be the eigenvalues of $E(W_{5}-e)$. 
	Note that $B$ is a leading principal submatrix  of $E(W_{5}-e)$. By Theorem 
	\ref{Interlacing theorem}, we have $\lambda_1\geq \beta_1$, $\lambda_2\geq 
	\beta_2 \geq \lambda_4$ and $\beta_3 \geq \lambda_5$. As $\lambda_i'$s are 
	non-zero, we have $\lambda_2>0$ and  $\lambda_4<0$. Since 
	$\det\big(E(W_{5}-e)\big)<0$, $i_-\big(E(W_{5}-e)\big)$ must be 3.
	Thus, the inertia 
	of $E(W_5-e)$ is (2,3,0).
	
	Assume that the result is true for $n-1$. 
	Since $E(W_{n-1}-e)$ is a leading 
	principal submatrix of $E(W_{n}-e)$, we use interlacing theorem 
	to the following three cases.
	
	{\it{Case{(i)}}}:	 Let $n\equiv0\Mod3$. Then $n-1\equiv2\Mod3$. 
	Since 
	$\det\big(E(W_{n}-e)\big)=0$, 
	we 
	have $i_0\big(E(W_{n}-e)\big)\geq 1$. By induction hypothesis, 
		$\In\big(E(W_{n-1}-e)\big)= \big(\frac{n}{3},\frac{2n-3}{3},0\big).$
	Using 
	Theorem 
	\ref{Interlacing 
		theorem}, we get $\In\big(E(W_{n}-e)\big)= 
	\big(\frac{n}{3},\frac{2n-3}{3},1\big)$.
	
	{\it{Case{(ii)}}}: If $n\equiv1\Mod3$, then 
	$\det\big(E(W_{n}-e)\big)>0$. Therefore 
	$i_0\big(E(W_{n}-e)\big)=0$. 
	Notice that, in this case, 
	$\In\big(E(W_{n-1}-e)\big)= 
	\big(\frac{n-1}{3},\frac{2n-5}{3},1\big)$. By 
	Theorem \ref{Interlacing theorem} together with the fact that 
	$i_0\big(E(W_{n}-e)\big)=0$, we get $i_+\big(E(W_{n}-e)\big)\geq 
	\big(\frac{n-1}{3}\big)+1$ 
	and $i_-\big(E(W_{n}-e)\big)\geq \big(\frac{2n-5}{3}\big)+1$. 
	Hence,			
	$\In\big(E(W_{n}-e)\big)= 
	\big(\frac{n+2}{3},\frac{2n-2}{3},0\big)$.
	
	{\it{Case{(iii)}}}: Suppose $n\equiv2\Mod3$. Then 
	$\det\big(E(W_{n}-e)\big)<0$. This implies that 
	$i_0\big(E(W_{n}-e\big)=0$. It 
	follows from 
	induction hypothesis that
	$\In\big(E(W_{n-1}-e)\big)= \big(\frac{n+1}{3},\frac{2n-4}{3},0\big)$. 
	Again, by Theorem \ref{Interlacing theorem}, we have
	$		i_+\big(E(W_{n}-e)\big)\geq \frac{n+1}{3}$ and 		
	$i_-\big(E(W_{n}-e)\big)\geq\frac{2n-4}{3}$.
	Note that  
	$\big(\frac{2n-4}{3}\big)$ is an even integer and 
	$\frac{n+1}{3}+\frac{2n-4}{3}=n-1$. Since 
	$\det\big(E(W_{n}-e)\big)<0$, 
	$i_-\big(E(W_{n}-e)\big)$ must be odd. Thus,
	$\In\big(E(W_{n}-e)\big)= 
	\big(\frac{n+1}{3},\frac{2n-1}{3},0\big)$.
\end{proof}
From Theorem \ref{invertible}, it follows that the rank of $E(W_n)$ is $n$ if 
and only if $n \not \equiv1\Mod3$.  The next result gives the rank of $E(W_n)$ 
if $n \equiv1\Mod3$.

\begin{theorem}\label{rank of wn}
	If $n\equiv1\Mod3$, then the rank of $E(W_n)$ is $n-2$.
\end{theorem}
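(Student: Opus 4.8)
The plan is to compute the rank of $E(W_n)$ when $n \equiv 1 \Mod 3$ directly from the recurrence structure already established, rather than re-diagonalizing the matrix. First I would recall that by Theorem \ref{wheel determinant}, $\det(E(W_n)) = 0$ in this case, so the rank is at most $n-1$; the substance of the claim is that the rank drops by exactly two, i.e. the nullity is precisely $2$. The cleanest route uses the row reduction already carried out in the proof of Theorem \ref{wheel determinant}: performing $R_i \to R_i - 2R_1$ for $i = 2, \dots, n$ on $E(W_n)$ is a rank-preserving operation, and one sees that the resulting matrix has its bottom-right $(n-1)\times(n-1)$ block equal to $T_{n-1}(-2,-2,-2) = T_{n-1}$, bordered by a first row $(0, \mathbf{e}')$ and a first column built from the border of $E(W_n)$. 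Since $n \equiv 1 \Mod 3$ forces $n - 1 \equiv 0 \Mod 3$, Lemma \ref{our tridiag} gives $\det(T_{n-1}) = 2^{n-1} \neq 0$, so $T_{n-1}$ is itself nonsingular of full rank $n-1$.

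Next I would exploit that $T_{n-1}$ is a nonsingular leading principal submatrix to pin the nullity. After the row reduction, $E(W_n)$ is equivalent to a matrix of the form $\begin{bmatrix} 0 & \mathbf{e}' \\ \mathbf{c} & T_{n-1} \end{bmatrix}$ for an appropriate first column $\mathbf{c}$ (the entries of $\mathbf{c}$ come from the original border minus twice nothing, so $\mathbf{c} = \mathbf{e}$, as in the displayed reduction). Since $T_{n-1}$ is invertible, I would use the Schur complement: the rank of this bordered matrix equals $(n-1) + \operatorname{rank}(0 - \mathbf{e}' T_{n-1}^{-1} \mathbf{c})$, a rank computed on a $1 \times 1$ block. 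Thus the rank is either $n-1$ or $n$ according to whether the scalar Schur complement $s := -\mathbf{e}' T_{n-1}^{-1} \mathbf{c}$ is nonzero or zero. But Theorem \ref{wheel determinant} already tells us $\det(E(W_n)) = 0$, which via the Schur-determinant identity $\det(E(W_n)) = \det(T_{n-1}) \cdot s$ forces $s = 0$. Hence the rank is exactly $n - 1$ — but this gives nullity one, not two, so a naive Schur argument is insufficient and signals where the real difficulty lies.

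The main obstacle is therefore that the determinant vanishing only guarantees nullity at least one, whereas the claim is nullity exactly two; the bordered-matrix Schur complement above cannot detect a rank drop of two unless the structure is examined more carefully. My plan to resolve this is to instead work from the splitting in \eqref{E splitted form}, writing $E(W_n) = \begin{bmatrix} 0 & \mathbf{e}' \\ \mathbf{e} & 2J_{n-1} \end{bmatrix} - 2\begin{bmatrix} 0 & \mathbf{0}' \\ \mathbf{0} & F \end{bmatrix}$, and to produce two explicit linearly independent null vectors using Lemma \ref{Image of a vector which has repeated components}. Since $n - 1 \equiv 0 \Mod 3$, that lemma applies to the circulant $\widetilde{E} = \operatorname{Circ}(\mathbf{u}')$, and I expect vectors of the periodic pattern $(\alpha,\beta,\gamma,\dots)$ with $\alpha + \beta + \gamma = 0$ to lie in the kernel of $\widetilde{E}$ and, after prepending a zero in the hub coordinate, in the kernel of $E(W_n)$. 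Concretely I would take the two independent vectors $\mathbf{0} \oplus (1,-1,0,1,-1,0,\dots)'$ and $\mathbf{0} \oplus (0,1,-1,0,1,-1,\dots)'$ (each with hub entry $0$), verify $\mathbf{e}'$ times the cyclic part is zero so the first row/column contributes nothing, and verify $\widetilde{E}$ annihilates them via Lemma \ref{Image of a vector which has repeated components}. This exhibits nullity at least two; combined with $\operatorname{rank}(E(W_n)) \geq n - 2$ coming from the nonsingular submatrix $T_{n-1}$ (via interlacing or via the fact that a full-rank $(n-1)$-block forces rank at least $n-2$ after accounting for the border), I would conclude the rank is exactly $n-2$.
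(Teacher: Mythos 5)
Your first half --- producing two independent null vectors --- is correct and coincides with the paper's own argument: for a vector $\mathbf{v}\in\mathbb{R}^{n-1}$ that repeats a pattern $(\alpha,\beta,\gamma)$ with $\alpha+\beta+\gamma=0$, one has $\mathbf{e}'\mathbf{v}=0$ and, writing $\widetilde{E}=2J_{n-1}-2F$, also $F\mathbf{v}=0$ because each coordinate of $F\mathbf{v}$ is a sum of three cyclically consecutive entries of $\mathbf{v}$; hence $(0,\mathbf{v}')'\in N(E(W_n))$, and your two patterns $(1,-1,0)$ and $(0,1,-1)$ (the paper uses $(1,0,-1)$ and $(0,1,-1)$) give two independent such vectors. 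This yields $\operatorname{rank}(E(W_n))\leq n-2$, exactly as in the paper.

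The lower bound is where your proof breaks. After the operations $R_i\to R_i-2R_1$, the bottom-right $(n-1)\times(n-1)$ block of the reduced matrix is \emph{not} $T_{n-1}(-2,-2,-2)$; it is $-2F=\cir\big((-2,-2,0,\cdots,0,-2)\big)$, a circulant carrying wrap-around entries $-2$ in the corner positions of the block. This is visible in the displayed reduction in the paper's proof of Theorem \ref{wheel determinant}: the second row of the reduced matrix ends in $-2$, and only the inner block on rows and columns $3,\dots,n$ is tridiagonal, namely $T_{n-2}$. Those corners are fatal here: when $3\mid n-1$, the circulant $F$ annihilates precisely the periodic vectors above, so $-2F$ is singular and no nonsingular $(n-1)\times(n-1)$ block exists. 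Indeed none \emph{can} exist --- a nonsingular $(n-1)$-square submatrix would force $\operatorname{rank}(E(W_n))\geq n-1$, contradicting the two null vectors you just constructed, so your finished proof would be internally inconsistent. (This is also why your Schur digression seemed to ``prove'' rank $n-1$: the premise, not the method, was at fault; and note that a full-rank $(n-1)$-block would force rank at least $n-1$, not $n-2$, while interlacing is unavailable since the reduced matrix is not symmetric and $T_{n-1}$ is not a principal submatrix of $E(W_n)$.) Retreating to the tridiagonal submatrix you actually have, $T_{n-2}$, does not help either: $n-2\equiv2\Mod3$, so $\det(T_{n-2})=0$ by Lemma \ref{our tridiag}. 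The paper closes this gap with a different submatrix, one that keeps the hub row and column: $E(W_{n-2}-e)$ is a leading principal submatrix of $E(W_n)$ of order $n-2$, and $\det\big(E(W_{n-2}-e)\big)=\det(B_{n-2})\neq0$ precisely because $n-2\equiv2\Mod3$ (Remark \ref{wheel graph remark2} and Lemma \ref{modify Tridiag}). Replacing your final sentence with this bordered-submatrix argument is what completes the proof.
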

\begin{proof}
	Since $n \equiv 1 \Mod 3$, we have $n=3k+1$ for some $k \geq 1$.
	Consider the vectors $\mathbf{x}$ and $\mathbf{y}$ in $\mathbb{R}^{n}$ 
	which 
	are given by
	\begin{align}\label{rank vectors}
		\mathbf{x}&= 
		(0,\underbrace{1,0,-1},\underbrace{1,0,-1}, \cdots, 
		\underbrace{1,0,-1})^{\prime}
		\intertext{and}\label{rank vectors1}
		\mathbf{y}&= 
		(0,\underbrace{0,1,-1},\underbrace{0,1,-1}, \cdots, 
		\underbrace{0,1,-1})^{\prime}.
	\end{align}
	By simple verification, we can see that
	$E(W_n)\mathbf{x}=E(W_n)\mathbf{y}=0$. Moreover, $\mathbf{x}$ and 
	$\mathbf{y}$ 
	are linearly independent. 
	Therefore, the dimension of the null space of $E(W_n)$ is at least two and 
	hence the rank of $E(W_n)$ is at most $n-2$. To prove the result, it is 
	enough 
	to find an $(n-2) \times (n-2)$ submatrix of $E(W_n)$ whose determinant is 
	non-zero. As $n \equiv 1 \Mod 3$, $n-2 \equiv 2 \Mod 3$. From Remark 
	\ref{wheel 
		graph remark2}, we have 
	$\det\big(E(W_{n-2}-e)\big)\neq 0$. 
	Since 
	$E(W_{n-2}-e)$ is a leading principal submatrix of $E(W_n)$, the result 
	follows.
\end{proof}
We now proceed to compute the inertia of the eccentricity matrix of the wheel 
graph.
\begin{theorem}\label{Inertia of eccentic wheel graph}
	Let $n\geq 5$.
	The inertia of $E(W_n)$ is
	\begin{equation*}
		\In\big(E(W_n)\big)= 
		\begin{cases}
			(\frac{n+3}{3},\frac{2n-3}{3},0)& \text{if } n\equiv0\Mod3,\\
			(\frac{n-1}{3},\frac{2n-5}{3},2)& \text{if } n\equiv1\Mod3,\\
			(\frac{n+1}{3},\frac{2n-1}{3},0)& \text{if } n\equiv2\Mod3.\\
		\end{cases}
	\end{equation*}
\end{theorem}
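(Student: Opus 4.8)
The plan is to deduce the inertia of $E(W_n)$ directly from the inertia of its leading principal submatrix $E(W_{n-1}-e)$, which was already computed in Lemma \ref{Inertia W-e}, combined with the interlacing inequalities of Theorem \ref{Interlacing theorem} and the global data furnished by the determinant (Theorem \ref{wheel determinant}) and the rank (Theorem \ref{rank of wn}). Recall from the remark preceding this theorem that $E(W_{n-1}-e)$ is a leading principal submatrix of $E(W_n)$ of order $n-1$, so the last assertion of Theorem \ref{Interlacing theorem} yields $i_+(E(W_n)) \geq i_+(E(W_{n-1}-e))$ and $i_-(E(W_n)) \geq i_-(E(W_{n-1}-e))$. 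Since Lemma \ref{Inertia W-e} requires its argument to be at least $5$, the deduction below is valid for $n \geq 6$, and the single remaining value $n = 5$ (which is $\equiv 2 \Mod 3$) I would verify by hand: there $\det(E(W_5)) = 2^{3}(1-5) = -32 \neq 0$ forces $i_0 = 0$ and an odd value of $i_-$, while $\rho(E(W_5)) = 1 + \sqrt{5}$ and the eigenvalue $2$ coming from the remark after Proposition \ref{EDM prop 1} give two distinct positive eigenvalues, whence $i_+ \geq 2$; the vanishing of the trace then excludes $i_+ = 4$, leaving $\In(E(W_5)) = (2,3,0)$, as claimed.

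For $n \geq 6$ I would split into the three residue classes. When $n \equiv 0 \Mod 3$, Lemma \ref{Inertia W-e} applied with $n-1 \equiv 2 \Mod 3$ gives $i_+(E(W_n)) \geq \tfrac{n}{3}$ and $i_-(E(W_n)) \geq \tfrac{2n-3}{3}$, whose sum is $n-1$. By Theorem \ref{wheel determinant}, $\det(E(W_n)) = 2^{n-2}(1-n) \neq 0$, so $i_0 = 0$ and $i_+ + i_- = n$: exactly one eigenvalue remains to be assigned. Since $\det(E(W_n)) < 0$, the count $i_-$ must be odd; placing the extra eigenvalue in the negative count would make $i_- = \tfrac{2n-3}{3} + 1$ even, a contradiction, so the extra eigenvalue is positive and $\In(E(W_n)) = \big(\tfrac{n+3}{3}, \tfrac{2n-3}{3}, 0\big)$. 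The case $n \equiv 2 \Mod 3$ runs identically: here $n-1 \equiv 1 \Mod 3$ produces the bounds $i_+ \geq \tfrac{n+1}{3}$ and $i_- \geq \tfrac{2n-4}{3}$, again summing to $n-1$, with $i_0 = 0$ and $i_-$ odd, and now the parity constraint forces the leftover eigenvalue to be negative, giving $\In(E(W_n)) = \big(\tfrac{n+1}{3}, \tfrac{2n-1}{3}, 0\big)$.

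The case $n \equiv 1 \Mod 3$ is the cleanest. Here $n-1 \equiv 0 \Mod 3$, so Lemma \ref{Inertia W-e} gives $i_+(E(W_n)) \geq \tfrac{n-1}{3}$ and $i_-(E(W_n)) \geq \tfrac{2n-5}{3}$, whose sum is $n-2$. By Theorem \ref{rank of wn} the rank of $E(W_n)$ equals $n-2$, so $i_0 = 2$ and $i_+ + i_- = n-2$; the two interlacing lower bounds are therefore forced to be equalities, yielding $\In(E(W_n)) = \big(\tfrac{n-1}{3}, \tfrac{2n-5}{3}, 2\big)$.

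The routine parts here are the fractional arithmetic and the leading-principal-submatrix relation (already recorded in the remark). I expect the one genuinely delicate point to be the placement of the single extra eigenvalue in the two nonsingular classes $n \equiv 0$ and $n \equiv 2 \Mod 3$: interlacing by itself cannot determine its sign, and it is precisely the parity of $i_-$, read off from the sign of $\det(E(W_n))$, that resolves the ambiguity. The base case $n = 5$ is the other spot requiring care, since the natural submatrix $E(W_4-e)$ lies outside the scope of Lemma \ref{Inertia W-e}, so there I rely on the trace together with an explicit count of positive eigenvalues rather than on interlacing alone.
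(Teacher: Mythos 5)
Your proposal is correct and follows essentially the same route as the paper: interlacing of $E(W_n)$ with its leading principal submatrix $E(W_{n-1}-e)$ via Lemma \ref{Inertia W-e}, then using the sign of $\det(E(W_n))$ (parity of $i_-$) in the two nonsingular residue classes and Theorem \ref{rank of wn} (forcing $i_0=2$ and equality in the interlacing bounds) when $n\equiv1\Mod3$. The only deviation is the base case $n=5$, where the paper interlaces with the explicit $3\times3$ submatrix $\left[\begin{smallmatrix}0&1&1\\1&0&0\\1&0&0\end{smallmatrix}\right]$ as in Lemma \ref{Inertia W-e}, while you instead combine the known eigenvalues $\rho(E(W_5))=1+\sqrt{5}$ and $2$ with the zero trace and the spectral-radius bound --- both arguments are valid.
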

\begin{proof}
	If $n=5$, then the proof is similar to that of Lemma	\ref{Inertia W-e}. 
	Let 
	$n\geq 6$. We consider the following three cases.
	
	{\it{Case{(i)}}}: Let $n\equiv0\Mod3$. Then from Theorem \ref{wheel 
		determinant}, $\det\big(E(W_n)\big)<0$. Note that $E(W_{n-1}-e)$ is a 
		leading 
	principal submatrix of $E(W_{n})$. By Theorem \ref{Interlacing theorem} and 
	Lemma \ref{Inertia W-e}, we have
	$i_+\big(E(W_n)\big)\geq i_+\big(E(W_{n-1}-e)\big)=\frac{n}{3},$ {and}
	$i_-\big(E(W_n)\big)\geq i_-\big(E(W_{n-1}-e)\big)=\frac{2n-3}{3}.$ As  
	$\big(\frac{2n-3}{3}\big)$ is an odd number 
	and $\det\big(E(W_n)\big)<0$, we get $i_+(E(W_n))=\frac{n}{3}+1$, the 
	result 
	follows 
	in 
	this 
	case.
	
	{\it{Case{(ii)}}}: Suppose $n\equiv1\Mod3$. By Theorem \ref{rank of wn}, 
	rank 
	of 
	$E(W_n)$ is $n-2$ and hence $	i_0\big(E(W_n)\big)=2$. Again, using 
	Theorem 
	\ref{Interlacing theorem} and Lemma \ref{Inertia W-e}, we get
	$	i_+\big(E(W_n)\big)\geq i_+\big(E(W_{n-1}-e)\big)\geq 
	\frac{n-1}{3},~~~\text{and}~~~
	i_-\big(E(W_n)\big)\geq i_-\big(E(W_{n-1}-e)\big)\geq 
	\frac{2n-5}{3}$. Since the sum of $i_+\big((E(W_n)\big), 
	i_-\big((E(W_n)\big)$ 
	and 
	$i_0\big((E(W_n)\big)$ is equal to 
	$n$, we have
		$\In\big(E(W_n)\big)=\big(\frac{n-1}{3},\frac{2n-5}{3},2\big).$
		
		{\it{Case{(iii)}}}: If $n\equiv2\Mod3$, then the proof goes similar to 
		that of 
		{\it{Case{(i)}}}. 
	\end{proof}
	\begin{remark}
		It has been proved that the distance matrix 
		of $W_n$ has a	unique 
		positive eigenvalue \cite{Wheel related graph}. But 
		this 
		property does not hold for $E(W_n)$ because 
		it has at least two positive eigenvalues for all $n\geq 5$.
	\end{remark}
	\section{Inverse formula for $E(W_n)$}
	Graham and Lov\'{a}sz \cite{Graham} obtained the {formula 
		(\ref{Lovaz}) for the} inverse of the 
	distance matrix 
	$D(T)$ 
	of a tree $T$ which is expressed as the sum of the Laplacian 
	matrix and a rank one matrix. 
	Motivated by this result, a similar inverse formula has been derived for 
	the 
	distance matrices of several graphs, see \cite{Bapat, Bapat2, Bapat Inverse 
	of 
		distance block graph,  helm graph, Zhou}.
	
	Balaji et al. 
	\cite{Balaji} obtained an inverse formula similar to (\ref{Lovaz}) for the 
	distance matrix of $W_n$ when $n$ is even. The formula for $D(W_n)^{-1}$ is 
	given as the sum of a Laplacian-like matrix and a rank one matrix. The 
	objective of this section is to find a similar inverse formula for $E(W_n)$ 
	when $n\not \equiv 1 \Mod 3$. We derive 
	this formula by adapting the common technique of finding a suitable 
	Laplacian-like matrix $\widetilde{L}$  and a vector 
	$\mathbf{w}\in \mathbb{R}^n$  such 
	that 
	\begin{equation}\label{identity}
		E(W_n)\mathbf{w}=\alpha\mathbf{e}~ \text{and}~ 
		\widetilde{L}E(W_n)+2I=\beta\mathbf{w}\mathbf{e}^{\prime},
	\end{equation}
	where $\alpha$ and 
	$\beta$ are real numbers. We start 
	with 
	computing the inverse of the 
	eccentricity 
	matrix of the wheel graph on 
	six vertices.
	\begin{example}
		Consider the eccentricity 
		matrix of $W_6$ which is given by
		\begin{eqnarray*}
			E(W_6) = \left[\begin{array}{cc}
				0&\mathbf{e}^{\prime}\\[3pt]
				\mathbf{e}_{5 \times 1}&\cir((0,0,2,2,0))
			\end{array}\right].
		\end{eqnarray*}
		The Laplacian-like matrix for $E(W_6)$ is 
		\begin{eqnarray*}
			\widetilde{L}(W_6) =  \frac{1}{3} \left[\begin{array}{cc|ccccc}
				5 &&-1&-1&-1&-1&-1\\\hline
				-1&&-1&2&-1&-1&2\\[1pt]
				-1&&2&-1&2&-1&-1\\[1pt]
				-1&&-1&2&-1&2&-1\\[1pt]
				-1&&-1&-1&2&-1&2\\[1pt]
				-1&&2&-1&-1&2&-1
			\end{array}\right].
		\end{eqnarray*}
		
		Set $\mathbf{c_1}^{\prime}=(0,1,0,0,1)$, 
		$\mathbf{c_2}^{\prime}=(0,0,1,1,0)$ 
		and 
		$M=\frac{1}{3}\cir(-6\mathbf{e_1}^{\prime}+2\mathbf{c_1}^{\prime}
		-\mathbf{c_2}^{\prime})$.
		Then $\widetilde{L}(W_6)$ can be rewritten as
		\begin{eqnarray*}
			\widetilde{L}(W_6) =  \frac{5}{3}I_6-
			\frac{1}{3} \begin{bmatrix}
				0& \mathbf{e^{\prime}}\\[3pt]
				\mathbf{e} & \mathbf{0}
			\end{bmatrix}+\begin{bmatrix}
				0& \mathbf{0}\\[3pt]
				\mathbf{0} & M
			\end{bmatrix}.
		\end{eqnarray*}
		If $\mathbf{w}=\frac{1}{6}(1,1,1,1,1,1)^{\prime}$, then by direct 
		verification, 
		we  see that 
		\begin{eqnarray*}\label{w6 inverse}
			E(W_6)^{-1}
			&=&\frac{1}{5}\left[\begin{array}{cc|ccccc}
				-4&&1&1&1&1&1\\\hline
			    1&&1&\frac{-3}{2}&1&1&\frac{-3}{2}\\[3pt]
				1&&\frac{-3}{2}&1&\frac{-3}{2}&1&1\\[3pt]
				1&&1&\frac{-3}{2}&1&\frac{-3}{2}&1\\[3pt]
				1&&1&1&\frac{-3}{2}&1&\frac{-3}{2}\\[3pt]
				1&&\frac{-3}{2}&1&1&\frac{-3}{2}&1
			\end{array}\right]=-\frac{1}{2}\widetilde{L}(W_6)+
			\frac{6}{5}\mathbf{w}\mathbf{w^{\prime}}.
		\end{eqnarray*}
	\end{example}
	The above example shows that the inverse of 
	$E(W_6)$ follows some nice patterns. Particularly, the submatrix of 
	$E(W_6)^{-1}$ 
	obtained by deleting the first row and the first column
	is a circulant matrix defined by the vector 
	$\mathbf{x}^{\prime}=\frac{1}{10}(2,-3,2,2,-3) \in 
	\mathbb{R}^{5}$, which follows symmetry in its last four coordinates.
	That is, the second coordinate is equal to the last coordinate of 
	$\mathbf{x}$, 
	and  the third 
	and fourth coordinates of $\mathbf{x}$ are the
	same. In fact, it 
	is observed that the same kind of pattern 
	is retained for  the matrices $E(W_n)^{-1}$ of higher orders. 
	So, most of the entries in the inverse can be obtained {by 
		finding a 
		vector similar to $\mathbf{x}$, which is given below.}
	
	The following significant vectors are identified from the observations made 
	from numerical examples which play a central role in finding a formula for 
	$E(W_n)^{-1}$.
	\vspace{-.6cm}
	\begin{align}
		\notag
		\intertext{For $n\geq 4$, fix the vectors $\mathbf{c_k} \in 
			\mathbb{R}^{n-1}$, where} \label{special vectors}
		\mathbf{c_k}&=\mathbf{e_{k+1}}+
		\mathbf{e_{n-k}} ~
		\text{where}~
		k
		\in \begin{cases}
			\{1,2,\cdots,\frac{n-2}{2}\}& \text{if $n$ is even}, \\
			\{1,2,\cdots,\frac{n-3}{2}\}& \text{if $n$ is odd}.
		\end{cases}
		\intertext{Consider the case $n\equiv2 \Mod 3$ and $n\geq 8$. Define 
		the 
			vectors}\label{x1}
		\mathbf{x_1}&=(2-n)\mathbf{e_1}+\sum_{k=1}^{\frac{n-2}{6}}
		(\mathbf{c_{3k-2}}-2\mathbf{c_{3k-1}}+
		\mathbf{c_{3k}}) \quad \text{if $n$ is even},
		\intertext{and}\label{x2}
		\mathbf{x_2}&= (2-n)\mathbf{e_1}+\sum_{k=1}^{\frac{n-5}{6}}
		\big(\mathbf{c_{3k}}-2\mathbf{c_{3k-1}}\big)+
		\sum_{k=1}^{\frac{n+1}{6}}\mathbf{c_{3k-2}}-
		2\mathbf{e_{\frac{n+1}{2}}} \quad\text{if $n$ is odd}.
		\intertext{In the case $n\equiv0\Mod 3$ and $n\geq 9$, we define}
		\label{x3}
		\mathbf{x_3}&= (-n)\mathbf{e_1}+\sum_{k=1}^{\frac{n}{6}}
		(2\mathbf{c_{3k-2}}-\mathbf{c_{3k-1}})-
		\sum_{k=1}^{\frac{n}{6}-1}\mathbf{c_{3k}}\quad\text{if $n$ is 
			even},
		\intertext{and}\label{x4}
		\mathbf{x_4}&= (-n)\mathbf{e_1}+\sum_{k=1}^{\frac{n-3}{6}}
		\big(2\mathbf{c_{3k-2}}-\mathbf{c_{3k-1}}-\mathbf{c_{3k}}\big)+
		2\mathbf{e_{\frac{n+1}{2}}} \quad \text{if $n$ is odd}.
	\end{align}
	\begin{lemma}\label{special vector lemma}
		If $\mathbf{x_i}$, $i=1,2,3,4$ are the vectors given in $(\ref{x1})- 
		(\ref{x4})$, then
		\begin{align}\label{x}
			\mathbf{x_1}=\mathbf{x_2}&=	
			(2-n,~\underbrace{1,-2,1},~\cdots,
			~\underbrace{1,-2,1})^{\prime}, \intertext{and}\label{y}	
			\mathbf{x_3}=\mathbf{x_4}&= 	
			(-n,~\underbrace{2,-1,-1},~\cdots,~\underbrace{2,-1,-1},~
			2)^{\prime}.
		\end{align}
	\end{lemma}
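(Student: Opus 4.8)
The plan is to treat $(\ref{x1})$–$(\ref{x4})$ as four explicit linear combinations of standard basis vectors and to compute, coordinate by coordinate, the coefficient of each $\mathbf{e_j}$, showing that the result is the patterned vector in $(\ref{x})$, respectively $(\ref{y})$. The one structural observation that organizes the whole computation is that each $\mathbf{c_k}=\mathbf{e_{k+1}}+\mathbf{e_{n-k}}$ is invariant under the reversal $j\mapsto n+1-j$ of the cycle coordinates $\{2,\dots,n-1\}$, because that map interchanges the two indices $k+1$ and $n-k$. Hence every $\mathbf{x_i}$ is automatically palindromic in its last $n-2$ coordinates, exactly as the claimed vectors are; this lets me compute only the coefficients on the lower half of the coordinates and recover the upper half by symmetry.

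For $n\equiv2\Mod3$ I would expand $\mathbf{x_1}$ first. The $k$-th summand $\mathbf{c_{3k-2}}-2\mathbf{c_{3k-1}}+\mathbf{c_{3k}}$ deposits the block $(1,-2,1)$ on the three consecutive left coordinates $3k-1,3k,3k+1$ and, by the symmetry above, the same block on the three right coordinates $n-3k,n-3k+1,n-3k+2$. As $k$ runs from $1$ to $\tfrac{n-2}{6}$, the left triples sweep out $\{2,\dots,\tfrac{n}{2}\}$ and the right triples sweep out $\{\tfrac{n}{2}+1,\dots,n-1\}$; the decisive check is that these two ranges partition $\{2,\dots,n-1\}$ with no gap or overlap and that consecutive blocks abut correctly, so that the whole vector becomes $(2-n,1,-2,1,\dots,1,-2,1)^{\prime}$, the leading entry coming from $(2-n)\mathbf{e_1}$. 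The odd counterpart $\mathbf{x_2}$ is handled identically, the only new point being that for odd $n$ the central coordinate $\tfrac{n+1}{2}$ is not reached by any admissible $\mathbf{c_k}$; this is precisely why the explicit term $-2\mathbf{e_{\frac{n+1}{2}}}$ is present, and it supplies the missing $-2$ at the centre.

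The expressions $(\ref{x3})$ and $(\ref{x4})$ for $n\equiv0\Mod3$ are treated by the same bookkeeping, with the block now equal to $(2,-1,-1)$. Here I would note that the $\mathbf{c_{3k}}$-summation in $\mathbf{x_3}$ deliberately runs over a shorter range than the $2\mathbf{c_{3k-2}}-\mathbf{c_{3k-1}}$ part; this is exactly what leaves an unpaired $+2$ at each end, so that the sweep produces $(-n,2,-1,-1,\dots,2,-1,-1,2)^{\prime}$ with a terminal $2$ at coordinate $n-1$ (and, by palindromy, at coordinate $2$). For the odd case $\mathbf{x_4}$ the central coordinate is again vacant, and the explicit $+2\mathbf{e_{\frac{n+1}{2}}}$ fills it.

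I expect the sole obstacle to be the index accounting: one must pin down exactly which coordinates each summand touches, confirm that the left and right sweeps meet cleanly at the midpoint, and verify the boundary behaviour at the two ends and (for odd $n$) at the centre. There is no conceptual difficulty beyond this tiling argument. Once the coordinate ranges are shown to partition $\{2,\dots,n-1\}$ correctly, the coefficient pattern drops out immediately, yielding $\mathbf{x_1}=\mathbf{x_2}$ as the common form $(\ref{x})$ and $\mathbf{x_3}=\mathbf{x_4}$ as the common form $(\ref{y})$. As a sanity check I would verify the formulas directly for one small even and one small odd $n$ in each residue class (e.g.\ $n=8,11$ and $n=9,12$) before writing out the general index computation.
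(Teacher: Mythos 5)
Your proposal is correct and follows essentially the same route as the paper's proof: expand each $\mathbf{c_k}$ as $\mathbf{e_{k+1}}+\mathbf{e_{n-k}}$ and verify that the resulting $(1,-2,1)$ (respectively $(2,-1,-1)$) blocks tile the coordinates $2,\dots,n-1$, which the paper carries out explicitly for $n\equiv 2\Mod 3$ with $n$ even and declares the remaining cases similar. Your extra observation that each $\mathbf{c_k}$ is fixed by the reversal $j\mapsto n+1-j$, so that only the lower half of the coordinates needs to be computed, is a mild streamlining of the same tiling computation (the paper simply writes out both mirror-image blocks), not a genuinely different argument.
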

	\begin{proof}
		Suppose $ n\equiv2\Mod3$ and $n$ is even. Then $n-2 =6l,$ for some 
		positive 
		integer $l$. For $ 1 \leq k \leq \frac{n-2}{6},$
		\begin{align*}
			\mathbf{c_{3k-2}}-2\mathbf{c_{3k-1}}+\mathbf{c_{3k}}&= 
			(\mathbf{e_{3k-1}}-2\mathbf{e_{3k}}+\mathbf{e_{3k+1}})+
			(\mathbf{e_{n-3k+2}}-2\mathbf{e_{n-3k+1}}+\mathbf{e_{n-3k}}).
		\end{align*}
		Thus
		\begin{align*}
			\sum_{k=1}^{\frac{n-2}{6}}(\mathbf{c_{3k-2}}-2\mathbf{c_{3k-1}}+
			\mathbf{c_{3k}})=&
			\Big[(\mathbf{e_{2}}-2\mathbf{e_{3}}+\mathbf{e_{4}})+
			\cdots + (\mathbf{e_{\frac{n}{2}-2}}-2\mathbf{e_{\frac{n}{2}-1}}+
			\mathbf{e_{\frac{n}{2}}})\Big]
			+\\&\Big[(\mathbf{e_{n-1}}-2\mathbf{e_{n-2}}+\mathbf{e_{n-3}})
			+\cdots+ 
			(\mathbf{e_{\frac{n}{2}+3}}-2\mathbf{e_{\frac{n}{2}+2}}
			+\mathbf{e_{\frac{n}{2}+1}})\Big].
		\end{align*}
		This implies that 
		$\mathbf{x_1}=(2-n,~\underbrace{1,-2,1},~\cdots,
		~\underbrace{1,-2,1})^{\prime}.$
		The remaining cases are proved similarly.
	\end{proof}
	\begin{remark}
		Balaji et al.  \cite{Balaji} derived an inverse formula similar to 
		$	(\ref{Lovaz})$ for $D(W_n)$ when $n$ is even. {They had 
			to  deal only with the case when  $n$ is even and introduced only 
			one special 
			vector.} 
		But in the 
		case of $E(W_n)^{-1}$, we have to 
		deal 
		with four cases as mentioned  in $(\ref{x1})-(\ref{x4})$.
		However,  by Lemma $\ref{special vector lemma}$, we have to consider 
		only 
		two 
		cases $ n\equiv2\Mod3$ and $ n\equiv 0\Mod3$. 
	\end{remark}
	Now we construct a Laplacian-like matrix $\widetilde{L}$ of order $n$ for 
	$W_n$ using the 
	above defined vectors. Later we will show that $\widetilde{L}$ is a 
	symmetric 
	matrix of 
	rank $n-1$ (see Lemma \ref{Inverse L symmetric} and Theorem \ref{L hat row 
		sum}).
	\begin{definition}\label{Laplacian defn}
		Let $n\not \equiv 1 \Mod 3$ and $n \geq 8$. Fix the symbols 
		$\mathbf{\bar{x}}$ and $\mathbf{\bar{y}}$ to denote the 
		vectors given in $(\ref{x})$ and $(\ref{y})$ respectively. 
		Define
		\begin{eqnarray}\label{special Laplacian}
			\widetilde{L}:=\frac{n-1}{3}I_n-\frac{1}{3}
			\begin{bmatrix}
				0 &\mathbf{e^{\prime}}\\[3pt]
				\mathbf{e}&\mathbf{0}\\
			\end{bmatrix}+\begin{bmatrix}
				0 &\mathbf{0}^{\prime}\\[3pt]
				\mathbf{0}&M\\
			\end{bmatrix},
		\end{eqnarray}
		where 
		\begin{eqnarray}\label{M}
			M=\frac{1}{3}\begin{cases}
				\cir(\mathbf{\bar{x}}^{\prime})& \text{if } 
				n\equiv2\Mod3,\\[6pt]
				\cir(\mathbf{\bar{y}}^{\prime})& \text{if}  ~ 			
				n\equiv0\Mod3.	
			\end{cases}
		\end{eqnarray}
	\end{definition}
	Throughout this paper, we deal with a circulant matrix of order $n-1$, 
	which is 
	defined by some vector 
	$\mathbf{x}$ in $\mathbb{R}^{n-1}$. On most occasions, the 
	vector $\mathbf{x}$ follows symmetry in its 
	last $n-2$ coordinates. The precise
	definition of symmetry is given below.
	\begin{definition}[\hspace{1sp}\cite{Balaji}]
		Let $n$ be a positive integer such that $n\geq 4$. A vector 
		$\mathbf{x}=(x_1,x_2 \cdots, 
		x_{n-1})^{\prime}$ in $\mathbb{R}^{n-1}$ is said to
		follow symmetry in its last $n-2$ coordinates if
		\begin{equation*}
			x_i = x_{n+1-i} ~\text{for all}~ i =2, 3,\cdots, n-1.
		\end{equation*}
		Equivalently, $\mathbf{x}$
		follows symmetry in its last $n-2$ coordinates if it has the form
		\begin{eqnarray*}
			\mathbf{x}=\begin{cases}
				(x_1,x_2,x_3,\cdots,x_{\frac{n}{2}}, x_{\frac{n}{2}}, 
				\cdots, x_3,x_2)^{\prime}& \text{if $n$ is even},\\[3pt]
				(x_1,x_2,x_3,\cdots,x_{\frac{n-1}{2}},x_{\frac{n+1}{2}}, 
				x_{\frac{n-1}{2}}, \cdots, x_3,x_2)^{\prime} & \text{if $n$ is 
				odd}.
			\end{cases}
		\end{eqnarray*}
	\end{definition}
	\begin{remark}\label{Linear combination of symmetric vectors}
		Let $\mathbf{x}$ and $\mathbf{y}$ be vectors in $\mathbb{R}^{n-1}$.	If 
		each of
		$\mathbf{x}$ and $\mathbf{y}$ follows symmetry in its last $n-2$ 
		coordinates, then $\alpha\mathbf{x}+\beta\mathbf{y}$ also follows 
		symmetry 
		in its last $n-2$ coordinates, where $\alpha$ and $\beta$ are real 
		numbers.
	\end{remark}
	In general, a circulant matrix $C$ need not be symmetric, but if the 
	defining 
	vector of $C$ follows symmetry in its last $n-2$ coordinates, then 
	$C$ is symmetric, which we show next.
	\begin{lemma}\label{circulant symmetry}
		Let 	$C=\cir(\mathbf{c}^{\prime})$, where $\mathbf{c}\in 
		\mathbb{R}^{n-1}$. If $\mathbf{c}$	follows symmetry in its last $n-2$ 
		coordinates, then $C$ is a symmetric matrix.
	\end{lemma}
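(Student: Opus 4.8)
The plan is to prove symmetry directly from the entrywise description of a circulant matrix, thereby reducing the claim to the single index identity that defines symmetry in the last $n-2$ coordinates. Since $C = \cir(\mathbf{c}^{\prime})$ has order $n-1$, I would first record that its $(i,j)$-th entry equals $c_{d+1}$, where $d \in \{0,1,\dots,n-2\}$ is the residue of $j-i$ modulo $n-1$; this is read off directly from the displayed form of $\cir(\mathbf{c}^{\prime})$ in Section \ref{sec 2}, whose first row is $(c_1, c_2, \dots, c_{n-1})$ and each of whose subsequent rows is the cyclic right shift of the previous one. To prove that $C$ is symmetric it then suffices to verify $C_{ij} = C_{ji}$ for every pair $(i,j)$.

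Next I would fix a pair $(i,j)$ with residue $d$ as above, so that $C_{ij} = c_{d+1}$. If $d = 0$, then $i$ and $j$ are congruent modulo $n-1$ and both entries equal $c_1$, so there is nothing to check. For $d \in \{1, \dots, n-2\}$, the residue of $i-j$ modulo $n-1$ is $(n-1)-d$, so that $C_{ji} = c_{(n-1)-d+1} = c_{n-d}$. Thus the desired equality $C_{ij} = C_{ji}$ reduces precisely to $c_{d+1} = c_{n-d}$.

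Finally I would match this against the symmetry hypothesis. Writing $i = d+1$, which ranges over $\{2, 3, \dots, n-1\}$ as $d$ ranges over $\{1, \dots, n-2\}$, the identity $c_{d+1} = c_{n-d}$ is exactly $c_i = c_{n+1-i}$, i.e. the assumption that $\mathbf{c}$ follows symmetry in its last $n-2$ coordinates. Hence $C_{ij} = C_{ji}$ for all $i,j$, and $C$ is symmetric. The only point requiring care is the modular index bookkeeping, namely correctly identifying the residue of $i-j$ as $(n-1)-d$ and rewriting $c_{n-d}$ in the form $c_{n+1-i}$; this is routine and constitutes the sole minor obstacle in the argument.
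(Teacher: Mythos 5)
Your proposal is correct and follows essentially the same route as the paper's proof: both identify the $(i,j)$-entry of the circulant in terms of the index difference (your residue $d$ plays the role of the paper's $k$ with $j=i+k$) and reduce symmetry of $C$ to the identity $c_{k+1}=c_{n-k}$, which is exactly the hypothesis $c_i=c_{n+1-i}$. The only cosmetic difference is that you phrase the bookkeeping via residues modulo $n-1$, whereas the paper uses the shift operator $T$ applied to the rows.
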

	\begin{proof}
		Let $\mathbf{c}=(c_1,c_2,\cdots,c_{n-1})^{\prime}$ and let 
		$C=(a_{ij})$. 
		Then $a_{ii}=c_1$ and $i$-th row of $C$ is
			$	
			T^{i-1}(\mathbf{c}^{\prime})=\big(c_{n-(i-1)},c_{n-(i-2)},\cdots,c_{n-2},
			c_{n-1},c_1,c_2,\cdots,c_{n-i}\big)$, where the operator $T$ is
			defined in 
			(\ref{right shift operator}).
		Assume that $i < j$. Then $j=i+k$ 
		for some positive integer $k$. Now,
		\begin{eqnarray*}
			a_{ij}&=&C_{i*}\mathbf{e_j}=T^{i-1}(\mathbf{c}^{\prime})\mathbf{e_j}
			=T^{i-1}(\mathbf{c}^{\prime})\mathbf{e_{i+k}}=c_{k+1}, 
			~~~\text{and}\\
			a_{ji}&=&C_{j*}\mathbf{e_i}=T^{j-1}(\mathbf{c}^{\prime}   
			)\mathbf{e_i}=c_{n-(j-i)}=c_{n-k}.
		\end{eqnarray*}
		Since $\mathbf{c}$ follows symmetry in its last 
		$n-2$ coordinates, we have  
		$c_{k+1}=c_{n-k}$. 
		Hence the proof.
	\end{proof}
	
	\begin{lemma}\label{Inverse L symmetric}
		The matrices $\widetilde{L}$ and $M$ given in Definition 
		$\ref{Laplacian 
			defn}$ are symmetric. 
	\end{lemma}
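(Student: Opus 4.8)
The plan is to reduce the symmetry of $\widetilde{L}$ to that of $M$, and then deduce the symmetry of $M$ from the circulant criterion of Lemma \ref{circulant symmetry}. Writing $\widetilde{L}$ as the sum of the three summands in (\ref{special Laplacian}), I would first observe that the scalar matrix $\frac{n-1}{3}I_n$ is trivially symmetric and that the bordered matrix $\frac{1}{3}\left[\begin{smallmatrix} 0 & \mathbf{e}' \\ \mathbf{e} & \mathbf{0}\end{smallmatrix}\right]$ equals its own transpose, since transposition merely interchanges its two off-diagonal blocks $\mathbf{e}$ and $\mathbf{e}'$. Consequently $\widetilde{L}$ is symmetric if and only if its third summand $\left[\begin{smallmatrix} 0 & \mathbf{0}' \\ \mathbf{0} & M\end{smallmatrix}\right]$ is, and the transpose of this block is $\left[\begin{smallmatrix} 0 & \mathbf{0}' \\ \mathbf{0} & M'\end{smallmatrix}\right]$; hence everything comes down to proving $M = M'$.

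Since $M$ is, up to the scalar factor $\frac{1}{3}$, the circulant matrix $\cir(\mathbf{\bar{x}}')$ when $n\equiv 2\Mod 3$ and $\cir(\mathbf{\bar{y}}')$ when $n\equiv 0\Mod 3$ (see (\ref{M})), I would invoke Lemma \ref{circulant symmetry}: a circulant matrix defined by a vector in $\mathbb{R}^{n-1}$ is symmetric provided that vector follows symmetry in its last $n-2$ coordinates. Thus it suffices to verify this symmetry property for the two defining vectors, whose explicit forms are supplied by Lemma \ref{special vector lemma}.

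The remaining verification is a direct inspection of the palindromic structure of $\mathbf{\bar{x}}$ and $\mathbf{\bar{y}}$ against the condition $x_i = x_{n+1-i}$ for $i = 2, \ldots, n-1$. For $\mathbf{\bar{x}}$, as given in (\ref{x}), the entry $2-n$ occupies the first coordinate and the remaining $n-2$ entries consist of consecutive copies of the block $(1,-2,1)$; since this block is itself a palindrome, the concatenation of identical such blocks reads the same forwards and backwards, so the last $n-2$ coordinates follow symmetry. For $\mathbf{\bar{y}}$, as given in (\ref{y}), the first coordinate is $-n$ and the remaining $n-2$ entries form the sequence consisting of the block $(2,-1,-1)$ repeated and terminated by a trailing $2$; this sequence begins and ends with $2$ and has the palindromic pattern $(-1,-1)$ sitting symmetrically between consecutive $2$'s, so it too satisfies $x_i = x_{n+1-i}$. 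Applying Lemma \ref{circulant symmetry} then gives $M = M'$, and the reduction in the first paragraph yields the symmetry of $\widetilde{L}$.

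I expect no genuine obstacle here; the only point demanding care is the index bookkeeping that confirms the palindrome pattern matches the definition of symmetry in the last $n-2$ coordinates in each of the two residue classes modulo $3$.
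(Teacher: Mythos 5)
Your proof is correct and follows essentially the same route as the paper: reduce the symmetry of $\widetilde{L}$ to that of $M$ via the decomposition $(\ref{special Laplacian})$, then apply Lemma \ref{circulant symmetry} to the defining vectors $\mathbf{\bar{x}}$ and $\mathbf{\bar{y}}$. The only difference is that you spell out the palindrome verification for the last $n-2$ coordinates, which the paper asserts without detail.
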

	\begin{proof}
		Since each of  $\mathbf{\bar{x}}$ and   $\mathbf{\bar{y}}$  follows 
		symmetry in 
		its last $n-2$ coordinates, we have $M$ is symmetric by the above 
		theorem. 
		Therefore, $\widetilde{L}$ is symmetric.
	\end{proof}
	To obtain the main result of this section, we need the following two 
	lemmas.
	\begin{lemma}\label{Me}
		Let $M$ be defined as in $(\ref{M})$. Then
			$M\mathbf{e}=\frac{1}{3}(2-n)\mathbf{e}.$
	\end{lemma}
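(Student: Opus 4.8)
The plan is to exploit the defining property of a circulant matrix: since every row of $\cir(\mathbf{c}^{\prime})$ is a cyclic rearrangement of the entries of $\mathbf{c}$, all row sums coincide and equal the scalar $\mathbf{c}^{\prime}\mathbf{e}$. Consequently $\cir(\mathbf{c}^{\prime})\mathbf{e}=(\mathbf{c}^{\prime}\mathbf{e})\,\mathbf{e}$, so computing $M\mathbf{e}$ reduces entirely to evaluating the sum of the coordinates of the defining vector in each of the two cases of $(\ref{M})$, and multiplying by the scalar $\frac{1}{3}$. There is no genuine obstacle here; the only point requiring care is the bookkeeping that shows the periodic blocks of $\mathbf{\bar{x}}$ and $\mathbf{\bar{y}}$ contribute nothing, which is exactly where the congruence hypotheses $n\equiv 2$ or $n\equiv 0 \Mod 3$ enter.

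First I would treat the case $n\equiv2\Mod3$, where $M=\frac{1}{3}\cir(\mathbf{\bar{x}}^{\prime})$ with $\mathbf{\bar{x}}=(2-n,\,\underbrace{1,-2,1},\cdots,\underbrace{1,-2,1})^{\prime}\in\mathbb{R}^{n-1}$ from $(\ref{x})$. The leading entry is $2-n$, and the remaining $n-2$ coordinates split into consecutive blocks $(1,-2,1)$. Since $n\equiv2\Mod3$ forces $n-2\equiv0\Mod3$, there are exactly $\frac{n-2}{3}$ such blocks, each of which sums to $1-2+1=0$. Hence $\mathbf{\bar{x}}^{\prime}\mathbf{e}=2-n$, and the row-sum property gives $\cir(\mathbf{\bar{x}}^{\prime})\mathbf{e}=(2-n)\mathbf{e}$, whence $M\mathbf{e}=\frac{1}{3}(2-n)\mathbf{e}$.

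Next I would handle the case $n\equiv0\Mod3$, where $M=\frac{1}{3}\cir(\mathbf{\bar{y}}^{\prime})$ with $\mathbf{\bar{y}}=(-n,\,\underbrace{2,-1,-1},\cdots,\underbrace{2,-1,-1},\,2)^{\prime}$ from $(\ref{y})$. Here the leading entry is $-n$ and the trailing entry is $2$; the $n-3$ coordinates in between form consecutive blocks $(2,-1,-1)$, each summing to $0$, and $n\equiv0\Mod3$ guarantees $n-3\equiv0\Mod3$ so these blocks exhaust the middle coordinates. Therefore $\mathbf{\bar{y}}^{\prime}\mathbf{e}=-n+0+2=2-n$, and the same row-sum argument yields $M\mathbf{e}=\frac{1}{3}(2-n)\mathbf{e}$. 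Since both cases produce the identical scalar, the claimed identity $M\mathbf{e}=\frac{1}{3}(2-n)\mathbf{e}$ holds uniformly, completing the proof.
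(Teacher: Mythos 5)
Your proposal is correct and follows essentially the same route as the paper's own proof: both rely on the fact that a circulant matrix has constant row sums equal to $\mathbf{c}^{\prime}\mathbf{e}$ for its defining vector $\mathbf{c}$, and then verify that the periodic blocks of $\mathbf{\bar{x}}$ (respectively $\mathbf{\bar{y}}$) sum to zero, leaving $\mathbf{e}^{\prime}\mathbf{\bar{x}}=\mathbf{e}^{\prime}\mathbf{\bar{y}}=2-n$. Your version merely makes the block-counting via the congruence hypotheses slightly more explicit than the paper does.
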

	\begin{proof}
		Since $M$ is a circulant matrix, all the row sums of $M$ 
		are equal. Therefore,
		\begin{equation*}
			M\mathbf{e}=\alpha\mathbf{e},~~~ \text{where}~~~
			\alpha=\begin{cases}
				\frac{1}{3}	\mathbf{e^{\prime}}\mathbf{\bar{x}}& \text{if } 
				n\equiv2\Mod3,\\[3pt]
				\frac{1}{3}	\mathbf{e^{\prime}}\mathbf{\bar{y}}& \text{if } 
				n\equiv0\Mod3.	
			\end{cases}
		\end{equation*}
		As $\mathbf{\bar{x}^{\prime}}=(2-n, 1,-2,1,1,-2,1,\cdots,
		1,-2,1) \in \mathbb{R}^{n-1}$, we have 
		$\mathbf{e^{\prime}}\mathbf{\bar{x}}=2-n$. Also, the sum of the 
		coordinates 
		of  $\mathbf{\bar{y}^{\prime}}=(-n,2,-1,-1,2,-1,-1,\cdots,
		2,-1,-1,2)\in \mathbb{R}^{n-1}$, except the first and the last 
		coordinates, is zero. So, 
		we get $\mathbf{e^{\prime}}\mathbf{\bar{y}}=2-n$. Thus, 
		$\alpha=\frac{1}{3}(2-n)$ which gives the desired result.
	\end{proof}
	
	\begin{lemma}\label{Si}
		Let  $n \not \equiv 1 \Mod3$ and $M$ be defined  as in $(\ref{M})$. Then
			$	M\widetilde{E}=\frac{1}{3}\cir(\mathbf{z}^{\prime})$, 
			where $\mathbf{z} =(-4,2,4-2n,4-2n,\cdots,4-2n,2)^{\prime} \in 
			\mathbb{R}^{n-1}.$
	\end{lemma}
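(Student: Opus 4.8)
The plan is to exploit the fact that both $M$ and $\widetilde{E}$ are circulant matrices of order $n-1$, so that their product is again circulant and it suffices to identify a single defining row. Writing $M=\frac{1}{3}\cir(\mathbf{v}^{\prime})$ with $\mathbf{v}=\mathbf{\bar{x}}$ when $n\equiv2\Mod3$ and $\mathbf{v}=\mathbf{\bar{y}}$ when $n\equiv0\Mod3$, the product property $(\ref{product of circulant property})$ gives $M\widetilde{E}=\frac{1}{3}\cir(\mathbf{v}^{\prime}\widetilde{E})$. Hence the lemma reduces to the single vector identity $\mathbf{v}^{\prime}\widetilde{E}=\mathbf{z}^{\prime}$, which I would verify in both residue classes.

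To compute $\mathbf{v}^{\prime}\widetilde{E}$, I would use the splitting $\widetilde{E}=2J_{n-1}-2F$ read off from $(\ref{E splitted form})$, where $F=\cir((1,1,\underbrace{0,\cdots,0},1))$. Since $\widetilde{E}$ and $F$ are symmetric, $\mathbf{v}^{\prime}\widetilde{E}=(\widetilde{E}\mathbf{v})^{\prime}$ and $\mathbf{v}^{\prime}F=(F\mathbf{v})^{\prime}$, so I work with column vectors. The first term contributes $2J_{n-1}\mathbf{v}=2(\mathbf{e}^{\prime}\mathbf{v})\mathbf{e}=(4-2n)\mathbf{e}$, using $\mathbf{e}^{\prime}\mathbf{\bar{x}}=\mathbf{e}^{\prime}\mathbf{\bar{y}}=2-n$ established in the proof of Lemma $\ref{Me}$. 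For the second term, note that $F$ has ones on the diagonal and in the two circulant bands adjacent to it, so the $i$-th coordinate of $F\mathbf{v}$ is the cyclic local sum $v_{i-1}+v_i+v_{i+1}$.

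The crux is that, after their exceptional first coordinate, both $\mathbf{\bar{x}}$ and $\mathbf{\bar{y}}$ are periodic with period three and with each period summing to zero (the blocks $1,-2,1$ and $2,-1,-1$). Consequently every window of three consecutive coordinates lying entirely inside the periodic tail sums to zero, so $(F\mathbf{v})_i=0$ for all interior indices $i=3,\dots,n-2$. Only the three coordinates whose window meets the exceptional first coordinate survive; evaluating these boundary windows (using the explicit palindromic forms of Lemma $\ref{special vector lemma}$) gives $(F\mathbf{v})_1=4-n$ and $(F\mathbf{v})_2=(F\mathbf{v})_{n-1}=1-n$ in both cases. Subtracting $2F\mathbf{v}$ from $(4-2n)\mathbf{e}$ then produces first coordinate $(4-2n)-2(4-n)=-4$, second and last coordinates $(4-2n)-2(1-n)=2$, and all remaining coordinates $4-2n$, which is precisely $\mathbf{z}$.

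The only real obstacle is bookkeeping the three boundary windows correctly, which requires the precise last few entries of $\mathbf{\bar{x}}$ and $\mathbf{\bar{y}}$; these are fixed by the palindromic descriptions in Lemma $\ref{special vector lemma}$ and recorded in the proof of Lemma $\ref{Me}$, so the computation is routine. The period-three zero-sum property disposes of the entire interior at once, leaving no genuine difficulty beyond careful indexing, and the two cases $n\equiv2\Mod3$ and $n\equiv0\Mod3$ run in parallel since both tail patterns share the vanishing-window feature.
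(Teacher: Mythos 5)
Your proposal is correct and follows essentially the same route as the paper: the paper likewise reduces to computing $\mathbf{\bar{x}}^{\prime}\widetilde{E}$ (resp.\ $\mathbf{\bar{y}}^{\prime}\widetilde{E}$) via the circulant product property, writes each column of $\widetilde{E}$ as $2(\mathbf{e}-\mathbf{e_{i-1}}-\mathbf{e_i}-\mathbf{e_{i+1}})$ --- which is exactly your splitting $\widetilde{E}=2J_{n-1}-2F$ read column-wise --- and disposes of the interior entries with the same observation that any three consecutive tail coordinates of $\mathbf{\bar{x}}$ or $\mathbf{\bar{y}}$ sum to zero. Your boundary values $(F\mathbf{v})_1=4-n$ and $(F\mathbf{v})_2=(F\mathbf{v})_{n-1}=1-n$ correspond precisely to the paper's explicit computations of $z_1$, $z_2$, and $z_{n-1}$.
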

	\begin{proof}
		Since $\widetilde{E}=
		\cir(\mathbf{u^{\prime}}$) where 	
		$\mathbf{u^{\prime}}=(0,0,2,\cdots,2,0)\in 
		\mathbb{R}^{n-1}$, the columns of $\widetilde{E}$ can be written as
		\begin{equation*}\label{row}
			\widetilde{E}_{*i}=\begin{cases}
				2(\mathbf{e}-\mathbf{e_1}-\mathbf{e_2}
				-\mathbf{e_{n-1}})& \text{if}~ i=1,\\
				2(\mathbf{e}-\mathbf{e_{i-1}}-\mathbf{e_i}-\mathbf{e_{i+1}})& 
				\text{if}~ 2\leq i \leq n-2,\\
				2(\mathbf{e}-\mathbf{e_1}-\mathbf{e_{n-2}}-\mathbf{e_{n-1}})& 
				\text{if}~ i=n-1.
			\end{cases}
		\end{equation*}
		First, assume that $n\equiv2\Mod3$. Note that $M$ and $\widetilde{E}$ 
		are 
		circulant. Using (\ref{product of circulant 
			property}), we write 
		\begin{equation*}
			M\widetilde{E}=\frac{1}{3} 
			\cir(\mathbf{\bar{x}^{\prime}}\widetilde{E}),~\text{where}~
			\mathbf{\bar{x}}=\big(2-n,\underbrace{1,-2,1},\cdots,
			\underbrace{1,-2,1}\big)^{\prime}
			\in 
			\mathbb{R}^{n-1}.
		\end{equation*} 
		Let 
		$\mathbf{z^{\prime}}=\mathbf{\bar{x}^{\prime}}\widetilde{E}
		=(z_1,\cdots,z_{n-1})$.
		Since $\mathbf{\bar{x}^{\prime}}\mathbf{e}=2-n$, we have
		\begin{eqnarray*}
			z_1=\mathbf{\bar{x}^{\prime}}\widetilde{E}_{*1}=
			2(2-n)-2\big((2-n)+1+1\big)=-4.
		\end{eqnarray*}
		Now,
		$	z_2=\mathbf{\bar{x}^{\prime}}\widetilde{E}_{*2}=
		2\mathbf{\bar{x}^{\prime}}\mathbf{e}-2\big((2-n)+1-2\big)=2
		=\mathbf{\bar{x}^{\prime}}\widetilde{E}_{*(n-1)}= z_{n-1}.$
		If  $3\leq i \leq n-2$, then
		$	z_i=\mathbf{\bar{x}^{\prime}}\widetilde{E}_{*i}=
		2\mathbf{\bar{x}^{\prime}}\mathbf{e}-2(x_{i-1}+x_i+x_{i+1}),~\text{where
		 			$x_i$ is 
			the $i$-th coordinate of $\mathbf{\bar{x}}$ }.$
		Note that starting from the second coordinate, the sum of any three 
		consecutive coordinates of 	$\mathbf{\bar{x}}$ is zero. So, we get
		$	z_i=2\mathbf{\bar{x}^{\prime}}\mathbf{e}=2(2-n)$.
		Thus,  
		$\mathbf{z^{\prime}} 
		=(-4,2,~\underbrace{4-2n,\cdots,4-2n}_{\text{$(n-4)$ 
				times}},~2).$
		Suppose $n\equiv0\Mod3$. Then $M\widetilde{E}=\frac{1}{3} 
		\cir(\mathbf{\bar{y}^{\prime}}\widetilde{E})$. By the same arguments as 
		above, we have  
		$\mathbf{\bar{y}^{\prime}}\widetilde{E}=\mathbf{z^{\prime}}$. Hence the 
		proof.
	\end{proof}
	We are now ready to give a formula for the inverse of the eccentricity 
	matrix 
	of $W_n$ which is similar to the form of (\ref{Lovaz}). That is, the 
	inverse of 
	$E(W_n)$ 
	is expressed as a sum of a symmetric Laplacian-like matrix of rank $n-1$ 
	and a  
	rank one 
	matrix. Hereafter, we denote $E(W_n)$ 
	simply by  $E$. 
	\begin{theorem}\label{inverse formula n is even 0 mod}
		Let $n\geq 8$ and $n \not\equiv 1\Mod 3$. Consider the matrix  
		$\widetilde{L}$ 
		given in $(\ref{special Laplacian})$. Suppose that $ 
		\mathbf{w}=\frac{1}{6}(7-n,1,\cdots,1)^{\prime} \in \mathbb{R}^{n}$. 
		Then
		\begin{equation*}
			E^{-1}=
			-\frac{1}{2}\widetilde{L}+\frac{6}{n-1}\mathbf{w}\mathbf{w^{\prime}}.%
			%
		\end{equation*}
	\end{theorem}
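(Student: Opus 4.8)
The plan is to verify directly that
\[
	X := -\tfrac{1}{2}\widetilde{L} + \tfrac{6}{n-1}\mathbf{w}\mathbf{w^{\prime}}
\]
is a right inverse of $E$; since $E$ is invertible for $n \not\equiv 1 \Mod 3$ by Theorem \ref{invertible}, the identity $XE = I_n$ forces $X = E^{-1}$. The whole computation is driven by the two identities recorded in $(\ref{identity})$, so first I would pin down the scalars $\alpha$ and $\beta$ for which $E\mathbf{w} = \alpha\mathbf{e}$ and $\widetilde{L}E + 2I_n = \beta\mathbf{w}\mathbf{e^{\prime}}$, and only then assemble the product.

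Establishing $\alpha$ is short. Splitting $\mathbf{w}$ as $\frac{7-n}{6}$ in the first coordinate and $\frac{1}{6}\mathbf{e}$ in the remaining $n-1$ coordinates, and using the block form $(\ref{eccentricity matrix of wheel graph})$ of $E$ together with the fact that every row sum of the circulant $\widetilde{E}$ equals $2(n-4)$, the top entry of $E\mathbf{w}$ is $\frac{1}{6}\mathbf{e^{\prime}}\mathbf{e} = \frac{n-1}{6}$, while the lower block is $\big(\frac{7-n}{6} + \frac{n-4}{3}\big)\mathbf{e} = \frac{n-1}{6}\mathbf{e}$. Hence $E\mathbf{w} = \frac{n-1}{6}\mathbf{e}$, giving $\alpha = \frac{n-1}{6}$.

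The core of the proof is the second identity. Using the decomposition $(\ref{special Laplacian})$ I would expand $\widetilde{L}E = \frac{n-1}{3}E - \frac{1}{3}\left[\begin{smallmatrix} 0 & \mathbf{e^{\prime}} \\ \mathbf{e} & \mathbf{0} \end{smallmatrix}\right]E + \left[\begin{smallmatrix} 0 & \mathbf{0^{\prime}} \\ \mathbf{0} & M \end{smallmatrix}\right]E$ blockwise, feeding in $\mathbf{e^{\prime}}\widetilde{E} = 2(n-4)\mathbf{e^{\prime}}$ for the middle term and Lemmas \ref{Me} and \ref{Si} for $M\mathbf{e}$ and $M\widetilde{E}$. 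The corner and off-diagonal blocks of $\widetilde{L}E$ collapse routinely (to $-\frac{n-1}{3}$, $\frac{7-n}{3}\mathbf{e^{\prime}}$ and $\frac{1}{3}\mathbf{e}$); the real work is the lower-right block, which is the circulant $\frac{1}{3}\cir\big((n-1)\mathbf{u^{\prime}} - \mathbf{e^{\prime}} + \mathbf{z^{\prime}}\big)$, where $\mathbf{u}$ defines $\widetilde{E}$ and $\mathbf{z}$ comes from Lemma \ref{Si}. The decisive point---and the reason the special vectors $\mathbf{\bar{x}}, \mathbf{\bar{y}}$ of Definition \ref{Laplacian defn} were engineered as they were---is that this combined defining vector telescopes to $(-5,1,\ldots,1)^{\prime} = -6\mathbf{e_1} + \mathbf{e}$, so the block equals $-2I_{n-1} + \frac{1}{3}J_{n-1}$. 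Adding $2I_n$ then produces a matrix whose $i$-th row is constant, equal to $\frac{7-n}{3}\mathbf{e^{\prime}}$ for $i=1$ and $\frac{1}{3}\mathbf{e^{\prime}}$ otherwise; this is precisely $2\mathbf{w}\mathbf{e^{\prime}}$, so $\beta = 2$.

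Finally I would assemble the inverse. Because $E$ and $\widetilde{L}$ are symmetric (Lemma \ref{Inverse L symmetric}), we have $\mathbf{w^{\prime}}E = (E\mathbf{w})^{\prime} = \alpha\mathbf{e^{\prime}}$, and substituting the second identity gives
\[
	XE = -\tfrac{1}{2}\widetilde{L}E + \tfrac{6}{n-1}\mathbf{w}(\mathbf{w^{\prime}}E) = I_n + \Big(-\tfrac{\beta}{2} + \tfrac{6\alpha}{n-1}\Big)\mathbf{w}\mathbf{e^{\prime}}.
\]
With $\alpha = \frac{n-1}{6}$ and $\beta = 2$ the scalar coefficient is $-1 + 1 = 0$, so $XE = I_n$ and the formula follows. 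I expect the telescoping cancellation in the lower-right circulant block to be the one genuine obstacle; everything else is block bookkeeping that the two preparatory lemmas have already set up.
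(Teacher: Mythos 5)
Your proposal is correct and follows essentially the same route as the paper: the same blockwise expansion of $\widetilde{L}E$ via Lemmas \ref{Me} and \ref{Si}, the same collapse of the lower-right circulant to $\frac{1}{3}\cir\big((-5,1,\dots,1)\big)$, and the same identities $E\mathbf{w}=\frac{n-1}{6}\mathbf{e}$ and $\widetilde{L}E+2I_n=2\mathbf{w}\mathbf{e^{\prime}}$. The only (cosmetic) difference is the final assembly: you verify $XE=I_n$ directly for the candidate $X$, whereas the paper right-multiplies the identity by $E^{-1}$ and then uses symmetry and a transpose to replace $\mathbf{w}\mathbf{e^{\prime}}E^{-1}$ by $\frac{6}{n-1}\mathbf{w}\mathbf{w^{\prime}}$; both closings are valid and rest on the same two prepared facts.
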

	\begin{proof}
		In order to get  identities in	(\ref{identity}), multiplying 
		$\widetilde{L}$ and $E$ gives
		\begin{eqnarray*}
			\widetilde{L}E=
			\left[\begin{matrix}
				\frac{1-n}{3} &A\\[6pt]
				N&S\\
			\end{matrix}\right],
		\end{eqnarray*}
		where 
		\begin{align}\notag
			A&=  
			\frac{n-1}{3}\mathbf{e^{\prime}}-
			\frac{1}{3}\mathbf{e^{\prime}}\widetilde{E},\\\label{N}
			N&=  \frac{n-1}{3}\mathbf{e}+M \mathbf{e}, 
			\intertext{and}\notag
			S&= 
			\frac{n-1}{3}\widetilde{E}-\frac{1}{3}\mathbf{e}\mathbf{e^{\prime}}+M\widetilde{E}.
		\end{align}
		Note that $\widetilde{E}\mathbf{e}=2(n-4)\mathbf{e}$. Therefore,
		\begin{equation*}
			A= 
			\frac{n-1}{3}\mathbf{e^{\prime}}-\frac{1}{3}(2n-8)\mathbf{e^{\prime}}
			=\frac{7-n}{3}\mathbf{e^{\prime}}.
		\end{equation*}
		Using Lemma \ref{Me} in (\ref{N}), we get,
		\begin{equation*}
			N=\frac{n-1}{3}\mathbf{e}+\frac{2-n}{3}\mathbf{e}=\frac{1}{3}\mathbf{e}.
		\end{equation*}
		%
		Combining Lemma \ref{Si} and (\ref{sum of circulant property}), $S$ can 
		be 
		written as
		\begin{equation*}
			S= \frac{1}{3} \cir 
			\big((n-1)\mathbf{u^{\prime}}-\mathbf{e^{\prime}}
			+\mathbf{z^{\prime}}\big), ~\text{where} ~\mathbf{z} 
			=(-4,2,4-2n,\cdots,4-2n,2)^{\prime}.
		\end{equation*}
		It is easy to verify that 
		$(n-1)\mathbf{u^{\prime}}-\mathbf{e^{\prime}}+\mathbf{z^{\prime}}
		=(-5,1,\cdots,1)$. Therefore,
		\begin{eqnarray*}
			S&=&\frac{1}{3}\cir(\mathbf{v}^{\prime}), ~\text{where}~ 
			\mathbf{v}=(-5,1,\cdots,1)^{\prime} \in \mathbb{R}^{n-1}.
		\end{eqnarray*}
		Hence,
		\begin{eqnarray*}
			\widetilde{L}E= \frac{1}{3}
			\left[\begin{matrix}
				1-n &(7-n)\mathbf{e^{\prime}}\\[6pt]
				\mathbf{e}&\cir(\mathbf{v^{\prime}})
			\end{matrix}\right].
		\end{eqnarray*}
		Since $2\mathbf{e_1^{\prime}}+\frac{1}{3}\mathbf{v^{\prime}}
		=\frac{1}{3}\mathbf{e^{\prime}}$, we have
		\begin{eqnarray*}\label{LE+2I}
			\widetilde{L}E+2I&= \frac{1}{3} \begin{bmatrix}
				7-n &(7-n)\mathbf{e^{\prime}}\\[5pt]
				\mathbf{e}&\cir(\mathbf{e^{\prime}})\\
			\end{bmatrix}=2\mathbf{w}\mathbf{e^{\prime}}.
		\end{eqnarray*}
		As $n \not\equiv 1\Mod 3$, we have $E$ is invertible which follows from 
		Theorem \ref{invertible}. So, from 
		the above identity, we get
		\begin{eqnarray}\label{pre inverse eqn}
			2E^{-1}&=&-\widetilde{L} +2\mathbf{we^{\prime}}E^{-1}.
		\end{eqnarray}
		By an easy to verification, we see that $E\mathbf{w} 
		=\big(\frac{n-1}{6}\big)\mathbf{e}$. Therefore,
		\begin{eqnarray}\label{pre inverse w}
			\mathbf{w} =\Big(\frac{n-1}{6}\Big)E^{-1}\mathbf{e}.
		\end{eqnarray}
		Note that $\widetilde{L}$ and $E$ are symmetric. This implies that 
		$E^{-1}$ is 
		also symmetric. Taking the transpose of  (\ref{pre inverse eqn}), we get
		$	2E^{-1}=-\widetilde{L} 
		+2E^{-1}\mathbf{	e}\mathbf{w^{\prime}}.$
		Using (\ref{pre inverse w}), we get the desired result
		\begin{eqnarray*}
			E^{-1}&=&-\frac{1}{2}\widetilde{L} 
			+\frac{6}{n-1}\mathbf{ww^{\prime}}.
		\end{eqnarray*}
	\end{proof}

	Next, we prove that the matrix $\widetilde{L}$ is a Laplacian-like matrix 
	and 
	of rank $n-1$. Hence $\widetilde{L}$ has some of the properties of 
	the Laplacian matrix.
	\begin{theorem}
		If $\widetilde{L}$ is defined as in $(\ref{special Laplacian})$, then  
		$\widetilde{L}$ is a Laplacian-like matrix.
	\end{theorem}
	\begin{proof}
		We have
		\begin{eqnarray*}
			\widetilde{L}\mathbf{e} &=& \frac{n-1}{3}\mathbf{e}-\frac{1}{3}
			\begin{bmatrix}
				n-1\\[3pt]
				\mathbf{e}_{(n-1) \times 1}
			\end{bmatrix}+\begin{bmatrix}
				0\\[3pt]
				M	\mathbf{e}_{(n-1) \times 1}
			\end{bmatrix}
			=\begin{bmatrix}
				0\\[3pt]
				\big(\frac{n-2}{3}\big)\mathbf{e}_{(n-1) \times 1}+
				M	\mathbf{e}_{(n-1) \times 1}
			\end{bmatrix}.
		\end{eqnarray*}
		Then, $\widetilde{L} \mathbf{e}=0$ which follows from Lemma \ref{Me}. 
		Since $\widetilde{L}$ is symmetric, row sums  of $\widetilde{L}$ are 
		also zero.
	\end{proof}

The proof of the following theorem is similar to that of Theorem $2$ in
\cite{Balaji}, which is given for the sake of completeness.
\begin{theorem}\label{L hat row sum}
	Consider the $n \times n$ matrix $\widetilde{L}$ given in $(\ref{special 
		Laplacian})$. Then the rank of $\widetilde{L}$ is $n-1$.
\end{theorem}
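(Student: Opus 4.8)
The plan is to prove the two inequalities $\operatorname{rank}(\widetilde{L})\le n-1$ and $\operatorname{rank}(\widetilde{L})\ge n-1$ separately. The upper bound is immediate from the preceding theorem: since $\widetilde{L}$ is a Laplacian-like matrix we have $\widetilde{L}\mathbf{e}=0$, so $\mathbf{e}$ is a nonzero element of $N(\widetilde{L})$ and therefore $\operatorname{rank}(\widetilde{L})\le n-1$. All the work lies in the reverse inequality.

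For the lower bound I would exploit the inverse formula already in hand rather than re-analyze the circulant block of $\widetilde{L}$ directly. Since $n\not\equiv 1\Mod 3$, the matrix $E:=E(W_n)$ is invertible by Theorem \ref{invertible}, and Theorem \ref{inverse formula n is even 0 mod} gives $E^{-1}=-\tfrac12\widetilde{L}+\tfrac{6}{n-1}\mathbf{w}\mathbf{w^{\prime}}$. Rearranging this identity yields $\widetilde{L}=-2E^{-1}+\tfrac{12}{n-1}\mathbf{w}\mathbf{w^{\prime}}$. Here $-2E^{-1}$ is nonsingular, hence of rank $n$, while $\tfrac{12}{n-1}\mathbf{w}\mathbf{w^{\prime}}$ has rank one because $\mathbf{w}=\tfrac16(7-n,1,\dots,1)^{\prime}\neq 0$. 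Using the standard fact that adding a rank-one matrix changes the rank by at most one, namely $\lvert\operatorname{rank}(A+B)-\operatorname{rank}(A)\rvert\le\operatorname{rank}(B)$, I obtain $\operatorname{rank}(\widetilde{L})\ge n-1$. Combining the two bounds gives $\operatorname{rank}(\widetilde{L})=n-1$.

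As an alternative route—closer to the multiplicative identity \eqref{identity} recorded in the proof of Theorem \ref{inverse formula n is even 0 mod}—I could instead start from $\widetilde{L}E=2(\mathbf{w}\mathbf{e^{\prime}}-I_n)$, which rearranges $\widetilde{L}E+2I=2\mathbf{w}\mathbf{e^{\prime}}$. Because $E$ is invertible, $\operatorname{rank}(\widetilde{L})=\operatorname{rank}(\mathbf{w}\mathbf{e^{\prime}}-I_n)$. Since the coordinates of $\mathbf{w}$ sum to $\tfrac16\big((7-n)+(n-1)\big)=1$, one has $\mathbf{e^{\prime}}\mathbf{w}=1$, so the rank-one matrix $\mathbf{w}\mathbf{e^{\prime}}$ has eigenvalue $1$ (simple, with eigenvector $\mathbf{w}$) and $0$ with multiplicity $n-1$; hence $\mathbf{w}\mathbf{e^{\prime}}-I_n$ has eigenvalue $0$ simple and $-1$ with multiplicity $n-1$, giving rank $n-1$ once more.

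I do not expect a genuine obstacle along either path, since the heavy computation was already carried out when establishing the inverse formula; the only points requiring care are confirming $\mathbf{w}\neq 0$ (so the perturbation is exactly rank one) and invoking the invertibility of $E$ under $n\not\equiv 1\Mod 3$. The truly laborious alternative, and the one that would have to replace this argument if a self-contained proof were desired (as in \cite{Balaji}), is to diagonalize the circulant block $\tfrac{n-1}{3}I_{n-1}+M$ of $\widetilde{L}$ through the discrete Fourier transform of its defining vector and to verify that it contributes exactly one vanishing eigenvalue; that Fourier computation, split into the cases $n\equiv 0\Mod 3$ and $n\equiv 2\Mod 3$ corresponding to the two choices of $M$ in \eqref{M}, is where the real effort would be concentrated.
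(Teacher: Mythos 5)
Your proposal is correct and non-circular, but it proceeds differently from the paper. The paper's proof is a null-vector chase built on the same identity you quote: if $\widetilde{L}\mathbf{x}=0$, then $\mathbf{x}^{\prime}\widetilde{L}E=0$ by symmetry of $\widetilde{L}$, and $\widetilde{L}E+2I=2\mathbf{w}\mathbf{e}^{\prime}$ then forces $\mathbf{x}^{\prime}=(\mathbf{x}^{\prime}\mathbf{w})\mathbf{e}^{\prime}$; hence $N(\widetilde{L})=\mathrm{span}\{\mathbf{e}\}$ and $\mathrm{rank}(\widetilde{L})=n-1$. Your second route is essentially this argument recast spectrally: it rests on the same identity and the invertibility of $E$, replacing the null-vector computation by the observation that $\mathbf{e}^{\prime}\mathbf{w}=1$ makes $0$ a simple eigenvalue of $\mathbf{w}\mathbf{e}^{\prime}-I_n$. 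Your first route is genuinely different: it reads Theorem \ref{inverse formula n is even 0 mod} backwards as $\widetilde{L}=-2E^{-1}+\frac{12}{n-1}\mathbf{w}\mathbf{w}^{\prime}$ and applies the rank-perturbation inequality $\lvert\mathrm{rank}(A+B)-\mathrm{rank}(A)\rvert\le\mathrm{rank}(B)$, using $\widetilde{L}\mathbf{e}=0$ only for the upper bound; this is legitimate since the inverse formula, Theorem \ref{invertible}, and the Laplacian-like property are all established before and independently of the rank statement. What the paper's argument buys is an exact identification of the null space with no rank inequality or eigenvalue count; what your perturbation argument buys is brevity, the transparent reading $\mathrm{rank}(\widetilde{L})=\mathrm{rank}(E^{-1})-1$ that the paper itself highlights, and portability to the singular case: applied to the Moore--Penrose formula (\ref{Formula MPI}) together with $\mathrm{rank}(E)=n-2$ (Theorem \ref{rank of wn}) and the fact that $E$ and its Moore--Penrose inverse have equal rank, it yields $\mathrm{rank}(\hat{L})\ge n-3$ in one line, which would replace the lengthy explicit construction of Lemma \ref{rank of L Moore} used to prove Theorem \ref{rank of L hat}. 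You are also right that the discrete-Fourier diagonalization of the circulant block is unnecessary here.
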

\begin{proof}
	Suppose 
	$\widetilde{L}\mathbf{x}=0$ where $\mathbf{x}$ is a non-zero vector in 
	$\mathbb{R}^n$. Then $\mathbf{x^{\prime}}	\widetilde{L} E=0$.
	Using the relation 
	$\widetilde{L}E+2I=2\mathbf{w}\mathbf{e^{\prime}}$,  
	we get	$\mathbf{x^{\prime}}=(\mathbf{x^{\prime}}\mathbf{w}) 
	\mathbf{e^{\prime}}$ where $\mathbf{w}=\frac{1}{6}(7-n,1,1,\cdots, 
	1)^{\prime}$.
	Thus, $\mathbf{x}$ is a scalar multiple of $\mathbf{e}$. This gives that 
	the 
	dimension of nullspace of $\widetilde{L}$ is at most one. Since 
	$\widetilde{L}\mathbf{e}=0$, we have nullity of $\widetilde{L}$ is one. 
	Hence  
	rank of 
	$\widetilde{L}$ is $n-1$.
\end{proof}

\section{Moore-Penrose Inverse of $E(W_n)$}
The Moore-Penrose Inverses  of the incidence 
matrices of the different graphs have been 
studied in the literature, see \cite{MP inverse, Bapat}. A formula 
for $D(W_n)\ssymbol{2}$ is given by Balaji et al. 
\cite{Balaji odd 
	wheel graph} when $n$ is 
odd  and it is written as the sum of a Laplacian-like matrix and a 
rank one 
matrix, see (\ref{Balaji MP formula}). This result 
motivates us to find a similar Moore-Penrose inverse formula for $E(W_n)$ when 
$n \equiv 1 \Mod 3$. Now, we compute $E(W_7)\ssymbol{2}$ in the following 
example.
\begin{example}
	Consider the eccentricity 
	matrix of the wheel graph on seven vertices. Take 
	$\mathbf{c_1}=(0,1,0,0,0,1)^{\prime}$ 
	and $\mathbf{c_2}=(0,0,1,0,1,0)^{\prime}$, 
	$\mathbf{e_4}=(0,0,0,1,0,0)^{\prime}$ and 
	\begin{eqnarray*}
		P&=&\cir\Big(\frac{-35}{18}\mathbf{e_1}+\frac{11}{36}\mathbf{c_1}-
		\frac{7}{36}\mathbf{c_2}
		+\frac{1}{18}\mathbf{e_4}\Big)
		=\cir\Big(\frac{-35}{18},\frac{11}{36},\frac{-7}{36},\frac{1}{18},
		\frac{-7}{36},\frac{11}{36}\Big).
	\end{eqnarray*}
	The Laplacian-like matrix for $E(W_7)$ is given by
	\begin{eqnarray*}		
		\hat{L}(W_7) =
		\frac{1}{6} \left[\begin{array}{cc}
			2&-2\mathbf{e}\\[3pt]
			-2\mathbf{e}_{6 \times 1}& 
			\cir\Big(\big(\frac{1}{3},\frac{11}{6},\frac{-7}{6},\frac{1}{3},
			\frac{-7}{6},
			\frac{11}{6}\big)\Big)
		\end{array}\right]
		=2I_7-
		\frac{1}{3} \begin{bmatrix}
			0& \mathbf{e}^{\prime}\\[3pt]
			\mathbf{e} & \mathbf{0}
		\end{bmatrix}+\begin{bmatrix}
			0& \mathbf{0}^{\prime}\\[3pt]
			\mathbf{0}& P
		\end{bmatrix}.
	\end{eqnarray*}
	Setting $\mathbf{w}=\frac{1}{6}(0,1,1,1,1,1,1)^{\prime}$, it is easy to 
	verify that
	\begin{eqnarray*}		
		E(W_7)\ssymbol{2} = \frac{1}{6} \left[\begin{array}{cc}
			-1&\mathbf{e}^{\prime}\\[3pt]
			\mathbf{e}&
			\cir\Big(\big(0,\frac{-6}{8},\frac{6}{8},0,\frac{6}{8},
			\frac{-6}{8}\big)\Big)
		\end{array}\right]=-\frac{1}{2}\hat{L}(W_7)+
		\mathbf{w}\mathbf{w^{\prime}}.
	\end{eqnarray*}
\end{example}

Hereafter we assume that $n\geq 10$ and $n \equiv 1\Mod 3$. To establish a 
formula for $E(W_n)\ssymbol{2}$, we fix the following vectors in 
$\mathbb{R}^{n-1}$, which are identified from the numerical 
computations.
\begin{itemize}[leftmargin=*]
	\item[]
	If $n$ is even, then fix $\mathbf{z_1}$, which is given by
	\begin{equation}\label{special vector z1}
		\mathbf{z_1}=
		(2n-n^2)\mathbf{e_1}+
		\sum_{k=1}^{\frac{n-4}{6}}\bigg(\frac{(3n-18k+8)}{2}
		\mathbf{c_{3k-2}} -\frac{(3n-18k+4)}{2}\mathbf{c_{3k-1}}+
		\mathbf{c_{3k}}\bigg)+\mathbf{c_{\frac{n}{2}-1}}.\\[6pt]
	\end{equation}
	\item[]
	Suppose $n$ is odd, then fix $\mathbf{z_2}$ as 
	\begin{equation}\label{special vector z2}
		\mathbf{z_2}=
		(2n-n^2)\mathbf{e_1}+
		\sum_{k=1}^{\frac{n-1}{6}}\bigg(\frac{(3n-18k+8)}{2}
		\mathbf{c_{3k-2}}-\frac{(3n-18k+4)}{2}\mathbf{c_{3k-1}}\bigg)+ 
		\sum_{k=1}^{\frac{n-7}{6}}
		\mathbf{c_{3k}}+\mathbf{e_{\frac{n+1}{2}}},
	\end{equation}
	where the vectors   $\mathbf{c_k}'$s are defined in 
	$(\ref{special 	vectors})$.
\end{itemize}
\begin{remark}
	If $n$ is even, then $n=6k+4$ where $k\geq 1$, and 
	if $n$ is odd, then $n=6k+1$ where $k\geq 2$. So, the vectors 
	$(\ref{special 
		vector z1})$ and $(\ref{special vector z2})$ are well defined.
\end{remark}

In order to obtain the desired formula for $E(W_n)\ssymbol{2}$, we introduce 
the following matrix $\hat{L}$ which is analogues to $\widetilde{L}$ 
given in 
Definition \ref{Laplacian defn} 
for the invertible case. 
Similar to the properties of $\widetilde{L}$, we will show that $\hat{L}$ is a 
symmetric
Laplacian-like matrix (see Lemma \ref{L hat symmetric} and Theorem \ref{row 
	sum of L}).
\begin{definition}\label{Pi}
	Let $n \equiv 1 \Mod 3$ and $n 
	\geq 10$.  
	Define
	\begin{eqnarray}\label{special Laplacian Moore}
		\hat{L}:=\frac{n-1}{3}I-\frac{1}{3}
		\begin{bmatrix}
			0 &\mathbf{e^{\prime}}\\[3pt]
			\mathbf{e}&\mathbf{0}\\
		\end{bmatrix}+\begin{bmatrix}
			0 &\mathbf{0}^{\prime}\\[3pt]
			\mathbf{0}&P\\
		\end{bmatrix},
	\end{eqnarray}
	where 
	\begin{eqnarray}\label{P}
		P=	\frac{1}{3(n-1)}\begin{cases}
			\cir(\mathbf{z_1}^{\prime})& \text{if $n$ is even},\\[6pt]
			\cir(\mathbf{z_2}^{\prime})& \text{if $n$ is odd}. 	\end{cases}
	\end{eqnarray}
	
\end{definition}
We now show that the matrix $\hat{L}$ is symmetric and row sums of $P$ are 
equal.
\begin{lemma}\label{L hat symmetric}
	Let  $\hat{L}$ and $P$ be the matrices as defined above. Then $P$   and 
	$\hat{L}$ 
	are symmetric.
\end{lemma}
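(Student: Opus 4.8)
The plan is to follow the same route used for the invertible case in Lemma \ref{Inverse L symmetric}: reduce the symmetry of $\hat{L}$ to the symmetry of its circulant block $P$, and then reduce the symmetry of $P$ to a structural property of its defining vector via Lemma \ref{circulant symmetry}. The scalar-times-identity term $\frac{n-1}{3}I$ and the bordered ``hub'' term $-\frac{1}{3}\left[\begin{smallmatrix} 0 &\mathbf{e^{\prime}}\\ \mathbf{e}&\mathbf{0}\end{smallmatrix}\right]$ appearing in (\ref{special Laplacian Moore}) are manifestly symmetric, so the block decomposition (\ref{special Laplacian Moore}) shows that $\hat{L}$ is symmetric as soon as $P$ is symmetric. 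By (\ref{P}), $P$ is a scalar multiple of $\cir(\mathbf{z_1}^{\prime})$ when $n$ is even and of $\cir(\mathbf{z_2}^{\prime})$ when $n$ is odd; hence, by Lemma \ref{circulant symmetry}, it suffices to verify that $\mathbf{z_1}$ and $\mathbf{z_2}$ follow symmetry in their last $n-2$ coordinates.

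First I would record that each generating vector $\mathbf{c_k}=\mathbf{e_{k+1}}+\mathbf{e_{n-k}}$ from (\ref{special vectors}) follows symmetry in its last $n-2$ coordinates: the reflection $i\mapsto n+1-i$ on the index set $\{2,\dots,n-1\}$ sends $k+1$ to $n-k$ and back, so the two unit spikes of $\mathbf{c_k}$ occupy a symmetric pair of positions (and both lie in $\{2,\dots,n-1\}$ because $k\ge 1$). Similarly $\mathbf{e_1}$ follows symmetry trivially, since all its last $n-2$ coordinates vanish, and in the odd case $\mathbf{e_{\frac{n+1}{2}}}$ has its single nonzero entry precisely at the fixed point $i=\frac{n+1}{2}$ of the reflection, so it too follows symmetry.

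Given these building blocks, the conclusion follows from Remark \ref{Linear combination of symmetric vectors}: both $\mathbf{z_1}$ in (\ref{special vector z1}) and $\mathbf{z_2}$ in (\ref{special vector z2}) are real linear combinations of $\mathbf{e_1}$, of several $\mathbf{c_k}$'s, and (for $\mathbf{z_2}$) of the central spike $\mathbf{e_{\frac{n+1}{2}}}$, each of which follows symmetry in its last $n-2$ coordinates; hence so do $\mathbf{z_1}$ and $\mathbf{z_2}$. Applying Lemma \ref{circulant symmetry} then gives the symmetry of $\cir(\mathbf{z_1}^{\prime})$ and $\cir(\mathbf{z_2}^{\prime})$, and therefore of $P$ and of $\hat{L}$.

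The only real obstacle is the bookkeeping in the decomposition step: confirming that every summand in (\ref{special vector z1}) and (\ref{special vector z2}) is genuinely one of the symmetry-respecting types above. In particular I would double-check the boundary and middle terms, such as $\mathbf{c_{\frac{n}{2}-1}}$ in (\ref{special vector z1}) and $\mathbf{e_{\frac{n+1}{2}}}$ in (\ref{special vector z2}), making sure the indices stay within the ranges prescribed by (\ref{special vectors}) so that each spike falls among the last $n-2$ coordinates (rather than colliding with the first coordinate). Once this routine index check is settled, the symmetry argument carries through verbatim.
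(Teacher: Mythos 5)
Your proof is correct and follows essentially the same route as the paper's own argument: both reduce the symmetry of $\hat{L}$ to that of the circulant block $P$, verify that each building block ($\mathbf{c_k}$, and $\mathbf{e_{\frac{n+1}{2}}}$ in the odd case) follows symmetry in its last $n-2$ coordinates, and then invoke Remark \ref{Linear combination of symmetric vectors} together with Lemma \ref{circulant symmetry}. The only difference is that you spell out the reflection $i\mapsto n+1-i$ and the index bookkeeping that the paper dismisses as ``easy to see,'' which is a harmless (indeed welcome) elaboration.
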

\begin{proof}
	It is easy to see that each $\mathbf{c_k}$ in  $(\ref{special vectors})$ 
	and 
	$\mathbf{	e_{\frac{n+1}{2}}}$, if $n$ is odd, 
	follows symmetry in its last $n-2$ coordinates. Therefore, by Remark 
	\ref{Linear combination of symmetric vectors}, each of $\mathbf{z_1}$ and 
	$\mathbf{z_2}$ follows symmetry in its last $n-2$ coordinates. By Lemma 
	$\ref{circulant symmetry}$, the matrix $P$ and hence $\hat{L}$ are 
	symmetric.
\end{proof}

\begin{lemma}\label{Pie}
	Let $P$ be the matrix defined in $(\ref{P})$. Then
		$	P\mathbf{e}=\frac{1}{3}(2-n)\mathbf{e}.$
	\end{lemma}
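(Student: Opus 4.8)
The plan is to mirror the proof of Lemma \ref{Me}. Since $P$ is a circulant matrix (being $\frac{1}{3(n-1)}\cir(\mathbf{z_1}^{\prime})$ or $\frac{1}{3(n-1)}\cir(\mathbf{z_2}^{\prime})$ according to the parity of $n$), all of its row sums coincide, and hence $P\mathbf{e}=\alpha\mathbf{e}$, where $\alpha=\frac{1}{3(n-1)}\mathbf{e}^{\prime}\mathbf{z_1}$ when $n$ is even and $\alpha=\frac{1}{3(n-1)}\mathbf{e}^{\prime}\mathbf{z_2}$ when $n$ is odd. Thus the whole lemma reduces to the single scalar identity $\mathbf{e}^{\prime}\mathbf{z_1}=\mathbf{e}^{\prime}\mathbf{z_2}=-(n-1)(n-2)$, after which $\alpha=\frac{-(n-1)(n-2)}{3(n-1)}=\frac{2-n}{3}$ gives the claim.

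To evaluate these sums I would use the two elementary facts that $\mathbf{e}^{\prime}\mathbf{c_k}=2$ (valid for every index $k$ in the range of $(\ref{special vectors})$, since there $k+1\neq n-k$) and $\mathbf{e}^{\prime}\mathbf{e_j}=1$. Applying $\mathbf{e}^{\prime}$ term by term to $(\ref{special vector z1})$, the $\mathbf{e_1}$ summand contributes $2n-n^2$, while each summand indexed by $k$ contributes $(3n-18k+8)-(3n-18k+4)+2=6$, the two large coefficients cancelling because they differ by the constant $2$. As this sum has $\frac{n-4}{6}$ terms and the trailing $\mathbf{c_{\frac{n}{2}-1}}$ contributes $2$, I obtain $\mathbf{e}^{\prime}\mathbf{z_1}=(2n-n^2)+(n-4)+2=-(n-1)(n-2)$. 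The odd case $(\ref{special vector z2})$ is handled identically: the first sum contributes $4$ per term over $\frac{n-1}{6}$ terms, the second contributes $2$ per term over $\frac{n-7}{6}$ terms, and $\mathbf{e}^{\prime}\mathbf{e_{\frac{n+1}{2}}}=1$, giving $\mathbf{e}^{\prime}\mathbf{z_2}=(2n-n^2)+\frac{2(n-1)}{3}+\frac{n-7}{3}+1=-(n-1)(n-2)$ as well.

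The computation is routine bookkeeping; the only point requiring care is the cancellation of the two coefficients that grow linearly in $n$ in the definitions of $\mathbf{z_1}$ and $\mathbf{z_2}$. This cancellation is exactly what lets the $-n^2$ coming from the $\mathbf{e_1}$ term survive unaccompanied, so that the total collapses to the clean product $-(n-1)(n-2)$ rather than leaving a residual linear term; consequently the main thing to verify is that the number of summands and the trailing correction terms are precisely those recorded in $(\ref{special vector z1})$ and $(\ref{special vector z2})$. Reassuringly, the resulting value of $\alpha$ matches the one found for $M$ in Lemma \ref{Me}, which is the expected consistency, since $\hat{L}$ plays the role of $\widetilde{L}$ in the singular case.
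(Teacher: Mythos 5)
Your proposal is correct and follows essentially the same route as the paper: use the circulant structure of $P$ to reduce the claim to the single row-sum computation $\mathbf{e}^{\prime}\mathbf{z_i}=3n-n^2-2=-(n-1)(n-2)$, evaluated term by term via $\mathbf{e}^{\prime}\mathbf{c_k}=2$ (with the coefficient pair $\tfrac{3n-18k+8}{2}$, $\tfrac{3n-18k+4}{2}$ contributing only their difference) and the isolated unit-vector terms. Your bookkeeping in both the even and odd cases matches the paper's computation exactly.
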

	\begin{proof}
		Since $P$ is a circulant matrix, we have
		$	P\mathbf{e}=\alpha\mathbf{e}$, 
		where
		\begin{eqnarray*}
			\alpha	=	\frac{1}{3(n-1)}\begin{cases}
				\mathbf{z_1^{\prime}}\mathbf{e}& \text{if $n$ is even},\\[6pt]
				\mathbf{z_2^{\prime}}\mathbf{e}& \text{if $n$ is odd}. 	
				\end{cases}
		\end{eqnarray*}
		First consider the case when $n$ is even. Let 
		$\mathbf{z_1}=(z_1^{(1)},z_2^{(1)},\cdots,z_{n-1}^{(1)})^{\prime}$ 
		and   $1 
		\leq k \leq 
		\frac{n-4}{6}$.
		Then, 
		$	z_{3k-1}^{(1)}= \frac{3n-18k+8}{2}, ~ z_{3k}^{(1)}= 
		\frac{-3n+18k-4}{2},  ~
		z_{3k+1}^{(1)}=1,~ \text{and}~  z_{\frac{n}{2}}^{(1)}=1.$
		Note that each $\mathbf{c_k}$ follows symmetry in its last $n-2$ 
		coordinates 
		and  has 
		only 
		two non-zero entries which are one.  So, we have
		\begin{eqnarray*}
			\alpha=	\frac{1}{3(n-1)} \mathbf{z_1^{\prime}}\mathbf{e}
			&=&	\frac{1}{3(n-1)}\Bigg((2n-n^2)+2
			\Bigg(\sum_{k=1}^{\frac{n-4}{6}}\Bigg(\bigg(\frac{8-4}{2}\bigg)+
			1\Bigg)\Bigg)+2\Bigg)\\
			&=& \frac{1}{3(n-1)}(3n-n^2-2)= \frac{2-n}{3}.
		\end{eqnarray*}
		If $n$ is odd, then  from the definition of $\mathbf{z_2}$, we 
		have
		\begin{eqnarray*}
			\alpha
			&=& \frac{1}{3(n-1)}\Bigg((2n-n^2)+2\Bigg(
			\sum_{k=1}^{\frac{n-1}{6}}\bigg(\frac{8-4}{2}\bigg)
			+ \sum_{k=1}^{\frac{n-7}{6}}1\Bigg)+1\Bigg)
			=\frac{2-n}{3}.
		\end{eqnarray*}
	\end{proof}
	In the following, we  
	present two lemmas which will be used to compute $\hat{L}E$. {This 
		helps in establishing identities (\ref{identity}) to achieve a formula 
		for 
		$E\ssymbol{2}$.}
	\begin{lemma}\label{PiV}
		Let $\mathbf{\tilde{v}^{\prime}}= 
		(2,-1,-1, 2,-1,-1,\cdots,2,-1,-1)\in \mathbb{R}^{n-1}$. Let 
		$V=\cir(\mathbf{\tilde{v}^{\prime}})$ and $P$ be defined as in $ 
		(\ref{P})$. 
		Then 
		$PV=\frac{1}{3}(1-n)V.$
	\end{lemma}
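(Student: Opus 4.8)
The plan is to exploit that both $P$ and $V$ are circulant of order $n-1$, and that $n-1\equiv 0\Mod 3$ since $n\equiv 1\Mod 3$. By the commutativity (\ref{commutative property}) and the product rule (\ref{product of circulant property}) for circulants, $PV=VP=\cir(\mathbf{\tilde{v}}^{\prime}P)$. Since $\frac{1-n}{3}V=\cir\big(\frac{1-n}{3}\mathbf{\tilde{v}}^{\prime}\big)$ by (\ref{sum of circulant property}), the assertion $PV=\frac{1}{3}(1-n)V$ is equivalent to the single row identity
\begin{equation*}
	\mathbf{\tilde{v}}^{\prime}P=\tfrac{1-n}{3}\,\mathbf{\tilde{v}}^{\prime}.
\end{equation*}
Thus the whole problem reduces to computing the row vector $\mathbf{\tilde{v}}^{\prime}P$.

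Because $\mathbf{\tilde{v}}^{\prime}=(2,-1,-1,2,-1,-1,\cdots)$ is $3$-periodic and $n-1$ is divisible by $3$, I would apply Lemma \ref{Image of a vector which has repeated components} with $\mathbf{g}=\mathbf{\tilde{v}}$ and $C=P$: this says $\mathbf{\tilde{v}}^{\prime}P$ is again $3$-periodic, of the form $(\tau_1,\tau_2,\tau_3,\cdots)$ with $\tau_1=\mathbf{\tilde{v}}^{\prime}P_{*1}$, $\tau_2=(T^2\mathbf{\tilde{v}})^{\prime}P_{*1}$ and $\tau_3=(T\mathbf{\tilde{v}})^{\prime}P_{*1}$, where $T$ is the shift (\ref{right shift operator}). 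Matching against $\frac{1-n}{3}\mathbf{\tilde{v}}^{\prime}$ reduces everything to the three scalar identities $\tau_1=\frac{2(1-n)}{3}$ and $\tau_2=\tau_3=\frac{n-1}{3}$. By Lemma \ref{L hat symmetric}, $P$ is symmetric, so its first column is its defining vector, $P_{*1}=\frac{1}{3(n-1)}\mathbf{z}$ with $\mathbf{z}=\mathbf{z_1}$ or $\mathbf{z_2}$. Writing $S_0,S_1,S_2$ for the sums of the entries of $\mathbf{z}$ over the positions congruent to $1,2,0\Mod 3$ respectively, and reading off the values of $\mathbf{\tilde{v}}$, $T\mathbf{\tilde{v}}$, $T^2\mathbf{\tilde{v}}$ on these three classes, I obtain
\begin{equation*}
	\tau_1=\frac{2S_0-S_1-S_2}{3(n-1)},\quad \tau_2=\frac{-S_0-S_1+2S_2}{3(n-1)},\quad \tau_3=\frac{-S_0+2S_1-S_2}{3(n-1)}.
\end{equation*}

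Next I would collapse these to one numerical check. Since $\mathbf{z}$ follows symmetry in its last $n-2$ coordinates (Lemma \ref{L hat symmetric}), we have $z_i=z_{n+1-i}$, and as $n+1\equiv 2\Mod 3$ the involution $i\mapsto n+1-i$ interchanges the residue classes $2$ and $0$ while fixing class $1$; hence $S_1=S_2$. With this, $\tau_2=\tau_3$ holds automatically and $\tau_1=-2\tau_2$, so all three identities reduce to the single statement $S_0-S_1=-(n-1)^2$, equivalently $\mathbf{\tilde{v}}^{\prime}\mathbf{z}=2(S_0-S_1)=-2(n-1)^2$.

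The remaining computation, which I expect to be the only real obstacle, is verifying $\mathbf{\tilde{v}}^{\prime}\mathbf{z}=-2(n-1)^2$ from the explicit formulas (\ref{special vector z1}) and (\ref{special vector z2}). The key simplification is that for the blocks $\mathbf{c_k}=\mathbf{e_{k+1}}+\mathbf{e_{n-k}}$ of (\ref{special vectors}) one has $\mathbf{\tilde{v}}^{\prime}\mathbf{c_k}=4$ when $k\equiv 0\Mod 3$ and $\mathbf{\tilde{v}}^{\prime}\mathbf{c_k}=-2$ otherwise, since $(k+1)+(n-k)=n+1\equiv 2\Mod 3$ forces both indices into class $1$ exactly when $k\equiv0$. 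Substituting these into (\ref{special vector z1}), the $k$-th summand $\frac{3n-18k+8}{2}\mathbf{\tilde{v}}^{\prime}\mathbf{c_{3k-2}}-\frac{3n-18k+4}{2}\mathbf{\tilde{v}}^{\prime}\mathbf{c_{3k-1}}+\mathbf{\tilde{v}}^{\prime}\mathbf{c_{3k}}$ collapses to $-(3n-18k+8)+(3n-18k+4)+4=0$, so the whole sum over $k$ vanishes and only $(2n-n^2)\mathbf{\tilde{v}}^{\prime}\mathbf{e_1}=2(2n-n^2)$ and the boundary term $\mathbf{\tilde{v}}^{\prime}\mathbf{c_{\frac{n}{2}-1}}=-2$ survive, totalling $-2(n-1)^2$. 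The odd case (\ref{special vector z2}) is handled identically, the surviving contributions being two short arithmetic sums and the vector $\mathbf{e_{\frac{n+1}{2}}}$, which again add up to $-2(n-1)^2$. I anticipate that the care here lies entirely in tracking the index ranges and the even/odd endpoint terms, not in any conceptual difficulty.
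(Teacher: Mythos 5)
Your proposal is correct, and it verifies the identity by a genuinely different reduction than the paper. The paper works on the other side of the (commuting) product: it writes $PV=\frac{1}{3(n-1)}\cir(\mathbf{z_1}^{\prime}V)$ and evaluates the \emph{vector} $\mathbf{z_1}^{\prime}V$ directly, using that $\mathbf{c_k}^{\prime}V$ is a sum of two rows of $V$, that every row of $V$ is $T^{j}(\mathbf{\tilde{v}})$ for some $j$ determined mod $3$, and the key identity $T(\mathbf{\tilde{v}})+T^{2}(\mathbf{\tilde{v}})=-\mathbf{\tilde{v}}$; each block of $\mathbf{z_1}$ then contributes $2\big(T(\mathbf{\tilde{v}})+T^{2}(\mathbf{\tilde{v}})\big)+2\mathbf{\tilde{v}}=0$, leaving $\mathbf{z_1}^{\prime}V=-(n-1)^{2}\mathbf{\tilde{v}}$. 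You instead write $PV=VP=\cir(\mathbf{\tilde{v}}^{\prime}P)$, reduce the matrix identity to the row identity $\mathbf{\tilde{v}}^{\prime}P=\frac{1-n}{3}\mathbf{\tilde{v}}^{\prime}$, and then — using Lemma \ref{Image of a vector which  has repeated components} (legitimately, since the order is $n-1\equiv 0\Mod 3$), the symmetry of $P$ to identify $P_{*1}$ with $\frac{1}{3(n-1)}\mathbf{z}$, and the involution $i\mapsto n+1-i$ to get $S_1=S_2$ — collapse everything to the single scalar check $\mathbf{\tilde{v}}^{\prime}\mathbf{z}=-2(n-1)^{2}$, which your block computation $\mathbf{\tilde{v}}^{\prime}\mathbf{c_k}\in\{4,-2\}$ confirms in both parity cases (I checked the odd case: $2(2n-n^2)-\frac{2(n-1)}{3}+\frac{2(n-7)}{3}+2=-2(n-1)^2$). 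The trade-off: the paper's route is more self-contained and yields the full vector identity in one pass without needing the symmetry of $\mathbf{z}$; your route replaces vector bookkeeping by scalar bookkeeping, reuses more of the paper's existing machinery (Lemmas \ref{Image of a vector which  has repeated components} and \ref{L hat symmetric}), and makes the even and odd cases nearly identical, at the cost of the extra structural observations ($P$ symmetric, $S_1=S_2$) needed to justify the reduction.
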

	\begin{proof}
		Consider  the operator $T$ defined in (\ref{right shift operator}). Then
		$T(\mathbf{\tilde{v}})+T^2(\mathbf{\tilde{v}})=-\mathbf{\tilde{v}}$ and 
		\begin{equation}\label{T(v)}
			V_{(k+1)*}=	T^k(\mathbf{\tilde{v}}) =
			\begin{cases}
				\mathbf{\tilde{v}}& \text{if } k\equiv0\Mod3,\\
				T(\mathbf{\tilde{v}})& \text{if } k\equiv1\Mod3,\\
				T^2(\mathbf{\tilde{v}})& \text{if } k\equiv2\Mod3.\\
			\end{cases}
		\end{equation}
		Assume that $n$ is even. 	Let  $1 \leq k \leq \frac{n-4}{6}$. Since 
		$\mathbf{c_k}=\mathbf{e_{k+1}}+\mathbf{e_{n-k}},$ we have
		\begin{equation*}
			\mathbf{c_{3k-2}^{\prime}}V=\mathbf{e_{3k-1}^{\prime}}V+
			\mathbf{e_{n-3k+2}^{\prime}}V
			= V_{(3k-1)*}+V_{(n-3k+2)*}
			= T^{3k-2}(\mathbf{\tilde{v}})+  T^{n-3k+1}(\mathbf{\tilde{v}}).
		\end{equation*}
		As  $3k-2 \equiv 1 \Mod 3$ and  $n-3k+1 \equiv 2 \Mod 3$, it 
		follows from (\ref{T(v)}) that 
		\begin{eqnarray}\label{ck v 1}
			\mathbf{c_{3k-2}^{\prime}}V= T(\mathbf{\tilde{v}})+ 
			T^2(\mathbf{\tilde{v}}).
		\end{eqnarray}
		Also,
		\begin{eqnarray}\label{ck v 2}
			\mathbf{c_{3k-1}^{\prime}}V= T^2(\mathbf{\tilde{v}})+ 
			T(\mathbf{\tilde{v}}), 
			\quad \text{and}\quad
			\mathbf{c_{3k}^{\prime}}V= 2\mathbf{\tilde{v}}.
		\end{eqnarray}  
		Since $P=\frac{1}{3(n-1)} \cir(\mathbf{z_1^{\prime}}$) and $V$ is a 
		circulant 
		matrix, it 
		follows from (\ref{product of circulant 
			property}) that  
		$PV=\frac{1}{3(n-1)}\cir(\mathbf{z_1^{\prime}}V).$
		From (\ref{special 
			vector z1}), we have  
		\begin{eqnarray*}
			\mathbf{z_1^{\prime}}V=	(2n-n^2)\mathbf{e_1^{\prime}}V+
			\sum_{k=1}^{\frac{n-4}{6}}
			\Bigg(\frac{(3n-18k+8)}{2}\mathbf{c_{3k-2}^{\prime}}V
			-\frac{(3n-18k+4)}{2}\mathbf{c_{3k-1}^{\prime}}V+
			\mathbf{c_{3k}^{\prime}}V\Bigg)+
			\mathbf{c_{\frac{n}{2}-1}^{\prime}}V.
		\end{eqnarray*}
		By the assumption on $n$, we have $n=6l+4$, where $l\geq 1$. Therefore 
		\begin{eqnarray}\label{ck v 3}
			\mathbf{c_{\frac{n}{2}-1}^{\prime}}V= 
			V_{(\frac{n}{2})*}+V_{(\frac{n}{2}+1)*}=T(\mathbf{\tilde{v}})+ 
			T^2(\mathbf{\tilde{v}}).
		\end{eqnarray}
		Using (\ref{ck v 1}), (\ref{ck v 2}) and (\ref{ck v 3}), we get
		\begin{eqnarray*}
			\mathbf{z_1^{\prime}}V&=&(2n-n^2)\mathbf{\tilde{v}}
			+	
			\sum_{k=1}^{\frac{n-4}{6}}\Bigg(
			\bigg(\frac{8-4}{2}\bigg)\big(T(\mathbf{\tilde{v}})+ 
			T^2(\mathbf{\tilde{v}})\big)+2\mathbf{\tilde{v}}\Bigg)+\big(T(\mathbf{\tilde{v}})+
			T^2(\mathbf{\tilde{v}})\big).
		\end{eqnarray*}
		As $T(\mathbf{\tilde{v}})+T^2(\mathbf{\tilde{v}})=-\mathbf{\tilde{v}}$, 
		we get
		$\mathbf{z_1^{\prime}}V=(2n-n^2)\mathbf{\tilde{v}}-
		\mathbf{\tilde{v}}=-(n-1)^2  \mathbf{\tilde{v}}.$
		Thus, 
		$PV=\frac{1}{3}(1-n)\cir(\mathbf{\tilde{v}}).$
		If $n$ is odd, the computation of $PV$ is quite similar to that of even 
		case.
	\end{proof}
	\begin{lemma}\label{PU}
		Let  $P$ be defined as in 
		$(\ref{P})$. Let $ 
		\mathbf{\bar{u}^{\prime}}=(1,1,0,0,\cdots,0,1)^{\prime} \in 
		\mathbb{R}^{n-1}$ and
		$U=\cir(\mathbf{\bar{u}^{\prime}})$. If
		$\mathbf{z^{\prime}}=\big(5n-n^2-10, 
		2n-n^2+2,~\underbrace{3,-6,3},
		\cdots,\underbrace{3,-6,3}, 2n-n^2+2\big) \in \mathbb{R}^{n-1}$, then
		\begin{equation*}
			PU=\frac{1}{3(n-1)}\cir(\mathbf{z^{\prime}}).
		\end{equation*}
	\end{lemma}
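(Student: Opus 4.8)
The plan is to exploit that both $P$ and $U$ are circulant matrices of order $n-1$, so their product is again circulant and is determined by a single defining row. Concretely, since $P=\frac{1}{3(n-1)}\cir(\mathbf{z_1}^{\prime})$ when $n$ is even and $U=\cir(\mathbf{\bar{u}}^{\prime})$, property (\ref{product of circulant property}) gives $PU=\frac{1}{3(n-1)}\cir(\mathbf{z_1}^{\prime}U)$, and likewise $PU=\frac{1}{3(n-1)}\cir(\mathbf{z_2}^{\prime}U)$ when $n$ is odd. Thus it suffices to prove that both $\mathbf{z_1}^{\prime}U$ and $\mathbf{z_2}^{\prime}U$ equal the target row vector $\mathbf{z}^{\prime}$.

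First I would pin down the action of $U$. Since $\mathbf{\bar{u}}^{\prime}=(1,1,0,\cdots,0,1)$, the $i$-th row of the circulant $U$ has ones exactly in the columns $i-1,\,i,\,i+1$ (indices read modulo $n-1$) and zeros elsewhere; equivalently $U$ is symmetric and, for any $\mathbf{v}\in\mathbb{R}^{n-1}$, the $j$-th coordinate of $\mathbf{v}^{\prime}U=(U\mathbf{v})^{\prime}$ is the cyclic three-term sum $v_{j-1}+v_j+v_{j+1}$. This reduces the whole computation to summing three cyclically consecutive coordinates of $\mathbf{z_1}$ (respectively $\mathbf{z_2}$), which I can read off from the explicit coordinate values $z_{3k-1}^{(1)}=\tfrac{3n-18k+8}{2}$, $z_{3k}^{(1)}=\tfrac{-3n+18k-4}{2}$, $z_{3k+1}^{(1)}=1$ recorded in the proof of Lemma \ref{Pie}, together with the symmetry $z_i^{(1)}=z_{n+1-i}^{(1)}$ for $i\geq 2$.

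The evaluation then splits into interior and boundary positions. At an interior position the three consecutive coordinates of $\mathbf{z_1}$ involve a $1$ and two terms of the forms $\tfrac{3n-18k+8}{2}$ and $-\tfrac{3n-18k+4}{2}$; the parts depending on $n$ and on $k$ cancel, leaving the constant values $3,-6,3$ repeating with period three, the residue of the position modulo $3$ selecting which of the three appears. Because $n\equiv1\Mod3$ forces $n-1\equiv0\Mod3$, this periodic block closes up consistently around the cycle. At the boundary positions $1$, $2$ and $n-1$ the relevant center coordinate is the large entry $2n-n^2$ (at position $1$) or its symmetric partner $\tfrac{3n-10}{2}$, and the three-term sums instead produce $5n-n^2-10$ at position $1$ and $2n-n^2+2$ at positions $2$ and $n-1$; assembling these yields $\mathbf{z_1}^{\prime}U=\mathbf{z}^{\prime}$.

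Finally, the odd case is handled identically with $\mathbf{z_2}$ in place of $\mathbf{z_1}$. I expect the only genuinely delicate point to be the bookkeeping near the central coordinate $\tfrac{n+1}{2}$, where the extra term $\mathbf{e_{\frac{n+1}{2}}}$ from (\ref{special vector z2}) enters the relevant three-term sums; one must check that the value $1$ it contributes is precisely what is needed for the sums there to collapse to $3,-6,3$ as well, so that $\mathbf{z_2}^{\prime}U$ again equals $\mathbf{z}^{\prime}$. The main obstacle is therefore not conceptual but the careful index accounting at the boundary and middle positions, ensuring the linear-in-$k$ cancellations and the matching of the two equal end entries under the $n\equiv1\Mod3$ divisibility.
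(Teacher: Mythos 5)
Your proposal is correct and follows essentially the same route as the paper's proof: reduce via the circulant product property to computing $\mathbf{z_1}^{\prime}U$ (resp. $\mathbf{z_2}^{\prime}U$), interpret each coordinate as a cyclic three-term sum of consecutive entries of $\mathbf{z_1}$, exploit the symmetry of $\mathbf{z_1}$ in its last $n-2$ coordinates, and verify the values $5n-n^2-10$, $2n-n^2+2$, and the repeating $3,-6,3$ pattern by a mod-$3$ case analysis, with special care at the boundary and middle positions. The paper likewise treats the odd case as an analogous computation with $\mathbf{z_2}$, so your plan matches it in both structure and detail.
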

	\begin{proof}
		Since $P$ and $U$ are circulant matrices, we write
			$	PU	=	\frac{1}{3(n-1)}\begin{cases}
				\cir(\mathbf{z_1^{\prime}}U)& \text{if $n$ is even},\\[6pt]
				\cir(\mathbf{z_2^{\prime}}U)& \text{if $n$ is odd}, 	
			\end{cases}$
			by (\ref{product of circulant property}). Note that the columns of 
			$U$ are 
			\begin{equation*}
				U_{*i}=\begin{cases}
					\mathbf{e_1}+\mathbf{e_2}+\mathbf{e_{n-1}}& \text{if}~ 
					i=1,\\
					\mathbf{e_{i-1}}+\mathbf{e_i}+\mathbf{e_{i+1}}& 
					\text{if}~ 2\leq i \leq 	n-2,\\
					\mathbf{e_1}+\mathbf{e_{n-2}}+\mathbf{e_{n-1}}& \text{if}~ 
					i=n-1.
				\end{cases}
			\end{equation*}
			Assume that $n$ is even. We need to compute the 
			vector $\mathbf{z_1^{\prime}}U$. Note that the $i$-th coordinate of 
			$\mathbf{z_1^{\prime}}U$ is $\mathbf{z_1^{\prime}}U_{*i}$. As 
			$\mathbf{z_1}$ 
			follows symmetry in its last $n-2$ coordinates, 
			$\mathbf{z_1^{\prime}}\mathbf{e_i}=\mathbf{z_1^{\prime}}\mathbf{e_{n+1-i}}$
			 for 
			$i=2,3,\cdots n-1$. Now,
			\begin{equation*}\label{zu0}
				\mathbf{z_1^{\prime}}U_{*1}=
				\mathbf{z_1^{\prime}}(\mathbf{e_1}+\mathbf{e_2}+\mathbf{e_{n-1}})
				= 
				\mathbf{z_1^{\prime}}\mathbf{e_1}+2\mathbf{z_1^{\prime}}\mathbf{e_2}
				=(2n-n^2)+2\Big(\frac{3n-18+8}{2}\Big)=5n-n^2-10.
			\end{equation*}
			We now claim that $\mathbf{z_1^{\prime}}U$ follows symmetry in its 
			last $n-2$ 
			coordinates. To see this, first show the second and ($n-1)$-th 
			coordinates of 
			$\mathbf{z_1^{\prime}}U$ are equal. Now
			\begin{eqnarray}\label{zu1}
				\mathbf{z_1^{\prime}}U_{*2}=
				\mathbf{z_1^{\prime}}(\mathbf{e_1}+\mathbf{e_2}+\mathbf{e_{3}})=
				\mathbf{z_1^{\prime}}(\mathbf{e_1}+\mathbf{e_{n-2}}+
				\mathbf{e_{n-1}})=\mathbf{z_1^{\prime}}U_{*(n-1)}.
			\end{eqnarray}
			Also,
			\begin{equation*}\label{z1u}
				\mathbf{z_1^{\prime}}U_{*2}=
				(2n-n^2)+\frac{3n-18+8}{2}-\frac{3n-18+4}{2}= 2n-n^2+2.
			\end{equation*}
			If we assume $3 \leq i \leq n-2$, then
			\begin{eqnarray}\label{zu2}
				\mathbf{z_1^{\prime}}U_{*i}=
				\mathbf{z_1^{\prime}}(\mathbf{e_{i-1}}+\mathbf{e_i}+\mathbf{e_{i+1}})
				=  \mathbf{z_1^{\prime}}(\mathbf{e_{n-i+2}}+
				\mathbf{e_{n-i+1}}+\mathbf{e_{n-i}})= 
				\mathbf{z_1^{\prime}}U_{*(n-i+1)}.
			\end{eqnarray}
			From (\ref{zu1}) and (\ref{zu2}), the vector
			$\mathbf{z_1^{\prime}}U$ follows symmetry in its last $n-2$ 
			coordinates.
			Therefore, to find  $\mathbf{z_1^{\prime}}U,$ it is enough to 
			compute 
			the first $\frac{n}{2}$ coordinates of 
			$\mathbf{z_1^{\prime}}U$. 
			
			Let $3\leq i \leq \frac{n}{2}-2$. Then we have the 
			following three cases.
			
			{\it{Case{(i)}}}:	 Suppose $i=3k$ form some $k\geq1$. Then $1 
			\leq k \leq 
			\frac{n-4}{6}$. Now 
			\begin{eqnarray*}
				\mathbf{z_1^{\prime}}U_{*i}=
				\mathbf{z_1^{\prime}}(\mathbf{e_{i-1}}+\mathbf{e_i}+\mathbf{e_{i+1}})
				&=&	
				\mathbf{z_1^{\prime}}(\mathbf{e_{3k-1}}+\mathbf{e_{3k}}+\mathbf{e_{3k+1}})\\
				&=&	\bigg(\frac{3n-18k+8}{2}\bigg)-
				\bigg(\frac{3n-18k+4}{2}\bigg)+1=3.
			\end{eqnarray*}
			
			{\it{Case{(ii)}}}:  If $i=3k+1$, then 
			\begin{equation*}
				\mathbf{z_1^{\prime}}U_{*i}=
				\mathbf{z_1^{\prime}}(\mathbf{e_{3k}}+\mathbf{e_{3k+1}}+
				\mathbf{e_{3k+2}})
				=-	\bigg(\frac{3n-18k+4}{2}\bigg)+1+
				\bigg(\frac{3n-18(k+1)+8}{2}\bigg)=-6.
			\end{equation*}

			{\it{Case{(iii)}}}:  Let $i=3k+2$. In this case, 
			\begin{equation*}		
				\mathbf{z_1^{\prime}}U_{*i}=
				\mathbf{z_1^{\prime}}(\mathbf{e_{3k+1}}+\mathbf{e_{3k+2}}+
				\mathbf{e_{3k+3}})
				=		1+\bigg(\frac{3n-18(k+1)+8}{2}\bigg)-
				\bigg(\frac{3n-18(k+1)+4}{2}\bigg)=3.
			\end{equation*}
			When $k=\frac{n-4}{6}$, we have 
			$\mathbf{c_{3k}}=\mathbf{c_{\frac{n}{2}-2}}=
			\mathbf{e_{\frac{n}{2}-1}}+\mathbf{e_{\frac{n}{2}+2}}$. Since $n 
			\equiv1\Mod3$ 
			and $n$ is even, $\frac{n}{2}\equiv 2 \Mod3$. If $i=\frac{n}{2}-1$ 
			then 
			\begin{equation*}\label{z3u}
				\mathbf{z_1^{\prime}}U_{*i}= \mathbf{z_1^{\prime}}
				\big(\mathbf{e_{\frac{n}{2}-2}} +\mathbf{e_{\frac{n}{2}-1}}+
				\mathbf{e_{\frac{n}{2}}}\big)
				= -	\bigg(\frac{3n-18(\frac{n-4}{6})+4}{2}\bigg)+1+1=-6.
			\end{equation*}
			Finally,
			$	\mathbf{z_1^{\prime}}U_{*\frac{n}{2}}= \mathbf{z_1^{\prime}}
			\big(\mathbf{e_{\frac{n}{2}-1}} +\mathbf{e_{\frac{n}{2}}}+
			\mathbf{e_{\frac{n}{2}+1}}\big)=2\mathbf{z_1^{\prime}}
			\mathbf{e_{\frac{n}{2}-1}}+\mathbf{z_1^{\prime}}
			\mathbf{e_{\frac{n}{2}}}
			= 3.$
			Thus, for $3\leq i \leq \frac{n}{2}$, 
			\begin{equation*}
				\mathbf{z_1^{\prime}}U_{*i}=\begin{cases}
					3& \text{if}~ i 
					\not \equiv1\Mod3,\\
					-6& \text{if}~ i 
					\equiv1\Mod3.
				\end{cases}
			\end{equation*}
			Hence, 
			$	\mathbf{z_1^{\prime}}U=\big(5n-n^2-10, 
			2n-n^2+2,~\underbrace{3,-6,3},\underbrace{3,-6,3}
			\cdots,\underbrace{3,-6,3}, 2n-n^2+2\big)=\mathbf{z}^{\prime}.$
			If $n$ is odd, we have to calculate $\mathbf{z_2^{\prime}}U$. 
			Computing the 
			vector $\mathbf{z_2^{\prime}}U$ is similar to that of 
			$\mathbf{z_1^{\prime}}U$  
			and it can be verified that $\mathbf{z_2^{\prime}}U = 
			\mathbf{z}^{\prime}$.
		\end{proof}
		\begin{remark}
			While deriving  the Moore-Penrose inverse of $D(W_n)$, the matrix 
			$\bar{L}D(W_n)$ is computed in  \cite{Balaji odd wheel graph} for 
			some 
			Laplacian-like matrix $\bar{L}$. Deleting the first row and the 
			first column 
			of $\bar{L}D(W_n)$ gives a circulant matrix defined by a vector. 
			The vector 
			is computed coordinate-wise where the calculations are lengthy. In 
			our case, 
			by rewriting $E(W_n)$ as in $(\ref{E splitted form})$, the 
			particular 
			type of vector 	is found directly rather than by coordinate-wise 
			computation as in \cite{Balaji odd wheel graph}.
		\end{remark}
		The following lemma is essential for the proof of Theorem \ref{MP 
		formula}.
		\begin{lemma}\label{Moore L}
			Let $\hat{L}$ be defined 
			as
			in $(\ref{special Laplacian Moore})$. Then
			$	\hat{L}E= \frac{1}{3}\begin{bmatrix}
				1-n &(7-n)\mathbf{e^{\prime}}\\[3pt]
				\mathbf{e}& \cir(\mathbf{v^{\prime}})
			\end{bmatrix},$
			where
			\begin{eqnarray*}
				\mathbf{v^{\prime}}=\frac{1}{n-1}\big(17-5n,~\underbrace{n-7,n-7,n+11},
				\cdots,\underbrace{n-7,n-7,n+11}, n-7,n-7\big) \in 
				\mathbb{R}^{n-1}.
			\end{eqnarray*}
		\end{lemma}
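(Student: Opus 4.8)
The plan is to compute $\hat{L}E$ by partitioning both matrices conformally and evaluating the four blocks, exactly as was done for $\widetilde{L}E$ in the proof of Theorem \ref{inverse formula n is even 0 mod}. Writing
\[
\hat{L}=\begin{bmatrix} \frac{n-1}{3} & -\frac{1}{3}\mathbf{e}^{\prime}\\[3pt] -\frac{1}{3}\mathbf{e} & \frac{n-1}{3}I_{n-1}+P\end{bmatrix}\quad\text{and}\quad E=\begin{bmatrix}0 & \mathbf{e}^{\prime}\\[3pt] \mathbf{e} & \widetilde{E}\end{bmatrix},
\]
the $(1,1)$ entry is $-\tfrac13\mathbf{e}^{\prime}\mathbf{e}=\tfrac{1-n}{3}$. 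For the $(1,2)$ block I would use that $\widetilde{E}$ is a circulant with row sum $2(n-4)$, so $\mathbf{e}^{\prime}\widetilde{E}=2(n-4)\mathbf{e}^{\prime}$, giving $\frac{n-1}{3}\mathbf{e}^{\prime}-\frac13\mathbf{e}^{\prime}\widetilde{E}=\frac{7-n}{3}\mathbf{e}^{\prime}$; and the $(2,1)$ block is $\frac{n-1}{3}\mathbf{e}+P\mathbf{e}=\frac13\mathbf{e}$ by Lemma \ref{Pie}. These three blocks match the claimed form immediately.

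The real work is the $(2,2)$ block $-\frac13 J_{n-1}+\frac{n-1}{3}\widetilde{E}+P\widetilde{E}$. Here I would exploit the splitting in $(\ref{E splitted form})$: since $\widetilde{E}=2J_{n-1}-2F$ with $F=\cir\big((1,1,0,\dots,0,1)\big)$, and $F$ is precisely the matrix $U$ of Lemma \ref{PU}, we may write $P\widetilde{E}=2PJ_{n-1}-2PU$. Now $PJ_{n-1}=(P\mathbf{e})\mathbf{e}^{\prime}=\frac{2-n}{3}J_{n-1}$ by Lemma \ref{Pie}, and $PU=\frac{1}{3(n-1)}\cir(\mathbf{z}^{\prime})$ by Lemma \ref{PU}. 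Substituting and collecting the $J_{n-1}$ terms turns the $(2,2)$ block into a single circulant, and using $(\ref{sum of circulant property})$ I would write it as
\[
\tfrac13\,\cir\!\Big((3-2n)\mathbf{e}^{\prime}+(n-1)\mathbf{u}^{\prime}-\tfrac{2}{n-1}\mathbf{z}^{\prime}\Big),
\]
where $\mathbf{u}^{\prime}=(0,0,2,\dots,2,0)$ is the vector defining $\widetilde{E}$.

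It then remains to verify coordinatewise that the defining vector above equals $\mathbf{v}^{\prime}$. I would clear the denominator and check that $(n-1)(3-2n)\mathbf{e}^{\prime}+(n-1)^2\mathbf{u}^{\prime}-2\mathbf{z}^{\prime}$ agrees with $(n-1)\mathbf{v}^{\prime}$ entry by entry. The first coordinate gives $(n-1)(3-2n)-2(5n-n^2-10)=17-5n$; the second and last coordinates use the special value $2n-n^2+2$ of $\mathbf{z}$ together with $u=0$ to yield $n-7$; and for the interior coordinates (where $u=2$) the two possible values $z\in\{3,-6\}$ of the repeating block $(3,-6,3)$ produce $n-7$ and $n+11$, respectively. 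Because $n\equiv1\Mod 3$ forces $n-1\equiv0\Mod3$, the blocks $(3,-6,3)$ tile the interior exactly, so the resulting pattern is $(n-7,n-7,n+11)$ repeated and ending in $n-7,n-7$, which is exactly $(n-1)\mathbf{v}^{\prime}$. The one point demanding care is the alignment of the repeating triples of $\mathbf{z}$ with those of $\mathbf{v}$, which are offset by one position; treating the two special end entries of $\mathbf{z}$ separately, as above, resolves this and is the main obstacle. The odd-$n$ case is identical, since Lemmas \ref{Pie} and \ref{PU} deliver the same $P\mathbf{e}$ and $PU$ for $\mathbf{z_2}$.
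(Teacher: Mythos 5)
Your proposal is correct and follows essentially the same route as the paper's proof: both compute $\hat{L}E$ blockwise using the splitting $\widetilde{E}=2J_{n-1}-2U$, invoke Lemma \ref{Pie} for the $P\mathbf{e}$ and $PJ_{n-1}$ terms and Lemma \ref{PU} for $PU$, and finish with a coordinatewise check of a circulant defining vector. Indeed, since $\mathbf{u}^{\prime}=2\mathbf{e}^{\prime}-2\mathbf{\bar{u}}^{\prime}$, your identity $(3-2n)\mathbf{e}^{\prime}+(n-1)\mathbf{u}^{\prime}-\tfrac{2}{n-1}\mathbf{z}^{\prime}=\mathbf{v}^{\prime}$ is exactly the paper's identity $\mathbf{e}^{\prime}-2(n-1)\mathbf{\bar{u}}^{\prime}-\tfrac{2}{n-1}\mathbf{z}^{\prime}=\mathbf{v}^{\prime}$ in disguise, so the two proofs differ only in the order in which the block products are assembled.
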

		\begin{proof}
			Let $ 
			\mathbf{\bar{u}^{\prime}}=(1,1,0,0,\cdots,0,1)\in 
			\mathbb{R}^{n-1}$. Then,
			from (\ref{E splitted form}), we write
			\begin{eqnarray*}
				E(W_n)= \begin{bmatrix}\label{E rewite}
					0 &\mathbf{e^{\prime}}\\[6pt]
					\mathbf{e}& 2J_{n-1}\\
				\end{bmatrix}-2
				\begin{bmatrix}
					0 &\mathbf{0}^{\prime}\\[6pt]
					\mathbf{0}& \cir(\mathbf{\bar{u}^{\prime}})
				\end{bmatrix}.
			\end{eqnarray*}
			To find $\hat{L}E$, we first compute
			\begin{eqnarray}\label{LE*}
				\hat{L}\begin{bmatrix}
					0 &\mathbf{e^{\prime}}\\[6pt]
					\mathbf{e}& 2J_{n-1}\\
				\end{bmatrix}&=\frac{n-1}{3} 
				\begin{bmatrix}
					0 &\mathbf{e^{\prime}}\\[6pt]
					\mathbf{e}& 2J_{n-1}\\
				\end{bmatrix}-\frac{1}{3}
				\begin{bmatrix}
					\mathbf{e^{\prime}}\mathbf{e}&2\mathbf{e^{\prime}}J_{n-1}\\[6pt]
					\mathbf{0}& J_{n-1}\\
				\end{bmatrix}+
				\begin{bmatrix}
					0 &\mathbf{0^{\prime}}\\[6pt]
					P\mathbf{e}& 2PJ_{n-1}\\
				\end{bmatrix}.
			\end{eqnarray}
			Note that
			\begin{eqnarray}\label{LE11}
				\frac{n-1}{3}\mathbf{e^{\prime}}-\frac{2}{3}\mathbf{e^{\prime}}J_{n-1}
				&=& \frac{1}{3}\big((n-1)-2(n-1)\big)\mathbf{e^{\prime}}
				=\frac{1-n}{3}\mathbf{e^{\prime}}.
			\end{eqnarray}
			From Lemma $\ref{Pie}$,
			\begin{eqnarray}\label{LE12}
				\frac{n-1}{3}\mathbf{e}+P\mathbf{e}
				&=& 
				\frac{1}{3}\big((n-1)+(2-n)\big)\mathbf{e}=\frac{1}{3}\mathbf{e}.
			\end{eqnarray}
			As
			$PJ_{n-1}=\frac{1}{3}(2-n)J_{n-1},$
			we have
			\begin{equation}\label{LE13}
				\frac{n-1}{3}(2J_{n-1})-\frac{1}{3}J_{n-1}+2PJ_{n-1}
				= \frac{1}{3}\big(2(n-1)-1+2(2-n)\big)J_{n-1}
				=\frac{1}{3}J_{n-1}.
			\end{equation}
			Substituting (\ref{LE11}) , (\ref{LE12}) and (\ref{LE13}) in the 
			matrix 
			equation (\ref{LE*}), we get
			\begin{eqnarray}\label{LE1}
				\hat{L}\begin{bmatrix}
					0 &\mathbf{e^{\prime}}\\[6pt]
					\mathbf{e}& 2J_{n-1}\\
				\end{bmatrix}= 	\frac{1}{3}
				\begin{bmatrix}
					1-n &(1-n)\mathbf{e^{\prime}}\\[6pt]
					\mathbf{e}& J_{n-1}\\
				\end{bmatrix}.
			\end{eqnarray}
			Let $U=\cir(\mathbf{\bar{u}^{\prime}})$. Now consider
			\begin{equation}\label{LE2}
				\hat{L}\begin{bmatrix}
					0 &\mathbf{0^{\prime}}\\[6pt]
					\mathbf{0}& U\\
				\end{bmatrix}= \frac{n-1}{3}
				\begin{bmatrix}
					0 &\mathbf{0^{\prime}}\\[6pt]
					\mathbf{0}&  U\\
				\end{bmatrix}-\frac{1}{3}\begin{bmatrix}
					0 &\mathbf{e^{\prime}}U\\[6pt]
					\mathbf{0}&  0_{n-1}\\
				\end{bmatrix}+
				\begin{bmatrix}
					0 &\mathbf{0^{\prime}}\\[6pt]
					\mathbf{0}&  
					PU\\
				\end{bmatrix}= \begin{bmatrix}
					0 &-\mathbf{e^{\prime}}\\[6pt]
					\mathbf{0}&  
					\frac{n-1}{3}U+PU
				\end{bmatrix}.
			\end{equation}
			Multiplying $\hat{L}$ and $E$, and by (\ref{LE1}) and (\ref{LE2}), 
			we get
			\begin{equation}
				\hat{L}E= 	\frac{1}{3}
				\begin{bmatrix}
					1-n &(1-n)\mathbf{e^{\prime}}\\[6pt]
					\mathbf{e}& J_{n-1}\\[6pt]
				\end{bmatrix}-2
				\begin{bmatrix}
					0 &-\mathbf{e^{\prime}}\\[6pt]
					\mathbf{0}&  
					\frac{n-1}{3}U+PU\\[6pt]
				\end{bmatrix}
				= \frac{1}{3}\begin{bmatrix}\label{LE}
					1-n &(7-n)\mathbf{e^{\prime}}\\[6pt]
					\mathbf{e}& J_{n-1}-2(n-1)U-6PU\\[6pt]
				\end{bmatrix}.
			\end{equation}
			Let $\mathbf{z}$ be defined as in Lemma $\ref{PU}$. Then a direct 
			computation 
			yields that the vector
			$	\mathbf{e^{\prime}}-2(n-1)\mathbf{\bar{u}^{\prime}}
			-\Big(\frac{2}{n-1}\Big)\mathbf{z^{\prime}}=\mathbf{v^{\prime}}.$
			By Lemma 
			\ref{PU}, we have 
			\begin{eqnarray*}\label{v prime}
				J_{n-1}-2(n-1)U-6PU&=&
				\cir\Big(\mathbf{e^{\prime}}-2(n-1)\mathbf{\bar{u}^{\prime}}
				-\Big(\frac{2}{n-1}\Big)\mathbf{z^{\prime}}\Big)
				= \cir(\mathbf{v^{\prime}}).
			\end{eqnarray*}
			The proof follows by combining the above equation and (\ref{LE}).
		\end{proof}
		Now, we are in a position to give a formula  for the Moore-Penrose 
		inverse of 
		the eccentricity matrix of the wheel graph, which is expressed in the 
		form 
		of 
		(\ref{Lovaz}).
		\begin{theorem}\label{MP formula}
			Let $\hat{L}$ be the matrix given in $(\ref{special Laplacian 
			Moore})$ and  $ 
			\mathbf{w}=\frac{1}{6}(7-n,1,\cdots,1)^{\prime} \in 
			\mathbb{R}^{n}$. Then
			\begin{equation}\label{Formula MPI}
				E\ssymbol{2}=
				-\frac{1}{2}\hat{L}+\frac{6}{n-1}\mathbf{w}\mathbf{w^{\prime}}.%
				%
			\end{equation}
		\end{theorem}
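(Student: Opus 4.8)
The plan is to verify directly that $X:=-\tfrac12\hat L+\tfrac{6}{n-1}\mathbf w\mathbf w^{\prime}$ satisfies the four defining equations of the Moore--Penrose inverse of $E:=E(W_n)$. Since $\hat L$ is symmetric (Lemma~\ref{L hat symmetric}) and $\mathbf w\mathbf w^{\prime}$ is symmetric, $X$ is symmetric; because $E$ is symmetric as well, one has $(XE)^{\prime}=E^{\prime}X^{\prime}=EX$, so \emph{proving $XE$ is symmetric} yields $EX=XE$ and at once the two symmetry axioms $(EX)^{\prime}=EX$ and $(XE)^{\prime}=XE$. It therefore suffices to establish (a) $XE$ is symmetric, (b) $EXE=E$, and (c) $XEX=X$.

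For (a) I would first record the one-line block identity $E\mathbf w=\tfrac{n-1}{6}\mathbf e$ (using $\widetilde E\mathbf e=2(n-4)\mathbf e$ and the form of $E$ in $(\ref{eccentricity matrix of wheel graph})$), so that $\mathbf w^{\prime}E=\tfrac{n-1}{6}\mathbf e^{\prime}$. Combining this with Lemma~\ref{Moore L} gives $XE=-\tfrac12\hat LE+\mathbf w\mathbf e^{\prime}$, and the borders cancel to leave the block-diagonal form $XE=\left[\begin{smallmatrix}1&\mathbf 0^{\prime}\\\mathbf 0&\cir(\mathbf s^{\prime})\end{smallmatrix}\right]$, where $\mathbf s=\tfrac16(\mathbf e-\mathbf v)$ inherits from $\mathbf v$ the symmetry in its last $n-2$ coordinates. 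By Remark~\ref{Linear combination of symmetric vectors} and Lemma~\ref{circulant symmetry}, $\cir(\mathbf s^{\prime})$, and hence $XE$, is symmetric, which settles (a) together with the two symmetry axioms.

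For (b), multiplying $E$ by the block-diagonal matrix $XE$ reduces $EXE=E$ to the two identities $\mathbf e^{\prime}\cir(\mathbf s^{\prime})=\mathbf e^{\prime}$ and $\widetilde E\,\cir(\mathbf s^{\prime})=\widetilde E$. The first is merely the statement that the row sums of $\cir(\mathbf s^{\prime})$ equal $1$, a short sum of the coordinates of $\mathbf s$. For the second I would write $\cir(\mathbf s^{\prime})-I=\cir\big((\mathbf s-\mathbf e_1)^{\prime}\big)$ and note that $\mathbf s-\mathbf e_1$ is a period-three vector whose repeating block $\tfrac1{n-1}(-2,1,1)$ sums to zero; every cyclic shift keeps this property, so each column of $\cir(\mathbf s^{\prime})-I$ is a period-three, zero-block-sum vector. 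For any such $\mathbf g$ one computes $(\widetilde E\mathbf g)_i=2\big(\mathbf e^{\prime}\mathbf g-(g_{i-1}+g_i+g_{i+1})\big)=0$, whence $\widetilde E\big(\cir(\mathbf s^{\prime})-I\big)=0$, i.e. $EXE=E$.

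For (c) I would show that $X$ annihilates $N(E)$. By Theorem~\ref{rank of wn}, $N(E)=\operatorname{span}\{\mathbf x,\mathbf y\}$ with $\mathbf x,\mathbf y$ as in $(\ref{rank vectors})$ and $(\ref{rank vectors1})$; a direct check gives $\mathbf w^{\prime}\mathbf x=\mathbf w^{\prime}\mathbf y=0$, while in $\hat L\mathbf x$ the border term drops out, leaving $\tfrac{n-1}{3}\tilde{\mathbf x}+P\tilde{\mathbf x}$, where $\tilde{\mathbf x}$ is the period-three, zero-block-sum tail of $\mathbf x$. Lemma~\ref{PiV} shows $P$ acts as the scalar $\tfrac{1-n}{3}$ on the columns of $V=\cir\big((2,-1,-1,\dots)^{\prime}\big)$, and these columns span the two-dimensional space of period-three zero-block-sum vectors, which contains $\tilde{\mathbf x}$ and $\tilde{\mathbf y}$; hence $P\tilde{\mathbf x}=\tfrac{1-n}{3}\tilde{\mathbf x}$, so $\hat L\mathbf x=0$ and thus $X\mathbf x=0$ (and likewise $X\mathbf y=0$). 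Finally $XEX-X$ vanishes on $N(E)$ since $X\mathbf x=X\mathbf y=0$, and on $R(E)$ since $(XEX-X)E=X(EXE)-XE=0$ by (b); as $\mathbb R^n=R(E)\oplus N(E)$, this gives $XEX=X$. I expect the main obstacle to be the two circulant computations — producing $XE$ in closed form (resting on the bookkeeping already packaged in Lemma~\ref{Moore L}) and pinning down $\widetilde E\,\cir(\mathbf s^{\prime})=\widetilde E$ alongside the eigen-relation $P\tilde{\mathbf x}=\tfrac{1-n}{3}\tilde{\mathbf x}$; the unifying device that tames both is that all the relevant vectors are period-three with zero block-sum, a subspace on which $\widetilde E$ acts as an annihilator and $P$ as a scalar.
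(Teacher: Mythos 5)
Your proposal is correct, and for three of the four Penrose equations it follows the paper's own route: your block-diagonal expression $XE=\left[\begin{smallmatrix}1&\mathbf{0}^{\prime}\\ \mathbf{0}&\cir(\mathbf{s}^{\prime})\end{smallmatrix}\right]$, obtained from Lemma~\ref{Moore L} and $\mathbf{w}^{\prime}E=\frac{n-1}{6}\mathbf{e}^{\prime}$, is exactly the paper's $XE=I_n-\frac{1}{n-1}\left[\begin{smallmatrix}0&\mathbf{0}^{\prime}\\ \mathbf{0}&V\end{smallmatrix}\right]$ with $V=\cir(\mathbf{\tilde{v}}^{\prime})$, $\mathbf{\tilde{v}}^{\prime}=(2,-1,-1,\dots,2,-1,-1)$, since $\cir(\mathbf{s}^{\prime})-I_{n-1}=-\frac{1}{n-1}V$; likewise your period-three, zero-block-sum annihilation argument for $EXE=E$ is the paper's check that $\mathbf{e}^{\prime}V=\mathbf{0}^{\prime}$ and $\mathbf{u}^{\prime}V=\mathbf{0}^{\prime}$. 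The genuine divergence is the step $XEX=X$. The paper stays inside circulant algebra: using $VP=PV$ (property (\ref{commutative property})), $V\mathbf{e}=\mathbf{0}$ and Lemma~\ref{PiV}, it shows $\left[\begin{smallmatrix}0&\mathbf{0}^{\prime}\\ \mathbf{0}&V\end{smallmatrix}\right]\hat{L}=0$ and $\left[\begin{smallmatrix}0&\mathbf{0}^{\prime}\\ \mathbf{0}&V\end{smallmatrix}\right]\mathbf{w}=\mathbf{0}$, hence $\left[\begin{smallmatrix}0&\mathbf{0}^{\prime}\\ \mathbf{0}&V\end{smallmatrix}\right]X=0$, and $XEX=X$ drops out of the block form of $XE$. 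You instead argue on subspaces: you prove $\hat{L}\mathbf{x}=\hat{L}\mathbf{y}=0$ directly (the border terms vanish, and by Lemma~\ref{PiV} the circulant $P$ acts as the scalar $\frac{1-n}{3}$ on the column space of $V$, which you correctly identify as the full two-dimensional space of period-three, zero-block-sum vectors containing the tails of $\mathbf{x}$ and $\mathbf{y}$), deduce that $X$ annihilates $N(E)$, and finish using $EXE=E$ together with $\mathbb{R}^n=R(E)\oplus N(E)$. This costs one extra ingredient the paper's computation never invokes, namely Theorem~\ref{rank of wn}, which you need to know that $N(E)$ is spanned by $\mathbf{x}$ and $\mathbf{y}$ alone; in exchange it buys a more conceptual picture (the axiom is verified separately on $R(E)$ and $N(E)$) and, notably, an independent proof of $\hat{L}\mathbf{x}=\hat{L}\mathbf{y}=0$. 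In the paper that fact appears only afterwards, in Lemma~\ref{rank of L Moore Inverse}, where it is deduced from the very theorem being proved; your derivation of it from Lemma~\ref{PiV} via the spanning observation is non-circular and is the one genuinely new idea in your write-up.
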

		\begin{proof}
			From Lemma \ref{Moore L}, 
			\begin{eqnarray*}
				\hat{L}E&=& \frac{1}{3}\begin{bmatrix}
					1-n &(7-n)\mathbf{e^{\prime}}\\[3pt]
					\mathbf{e}& \cir(\mathbf{v^{\prime}})\\
				\end{bmatrix}.
			\end{eqnarray*}
			By a simple manipulation, we write
			\begin{eqnarray}\label{LE4}
				\hat{L}E&=&\frac{1}{3}
				\begin{bmatrix}
					7-n &(7-n)\mathbf{e^{\prime}}\\[3pt]
					\mathbf{e}&\cir(\mathbf{\hat{v}^{\prime}})\\
				\end{bmatrix}-2I_n,
			\end{eqnarray}
			where\begin{eqnarray*}
				\mathbf{\hat{v}^{\prime}}=\frac{1}{n-1}\big(\underbrace{n+11,n-7,n-7},
				\cdots,\underbrace{n+11,n-7,n-7}\big).
			\end{eqnarray*}
			Let 	\begin{eqnarray*}
				X=	
				-\frac{1}{2}\hat{L}+\frac{6}{n-1}\mathbf{w}\mathbf{w^{\prime}}.
			\end{eqnarray*}
			To prove that $X$ is the Moore-Penrose 
			generalized inverse of $E$, we first show that $XE$ is symmetric. 
			Now,
			\begin{eqnarray*}
				XE=	
				-	
				\frac{1}{2}\hat{L}E+\frac{6}{n-1}\mathbf{w}\mathbf{w^{\prime}}E.
			\end{eqnarray*}
			Note that 
			$\mathbf{w^{\prime}}E=\frac{1}{6}(n-1)\mathbf{e^{\prime}}$. 
			Applying 
			(\ref{LE4}) in the above equation, we get
			\begin{equation}\label{RE1}
				XE=	I_n-
				\frac{1}{6}
				\begin{bmatrix}
					7-n &(7-n)\mathbf{e^{\prime}}\\[3pt]
					\mathbf{e}&\cir(\mathbf{\hat{v}^{\prime}})\\
				\end{bmatrix}+\mathbf{w}\mathbf{e^{\prime}}= I_n-
				\frac{1}{6}
				\begin{bmatrix}
					0 &\mathbf{0^{\prime}}\\[3pt]
					\mathbf{0}&\cir(\mathbf{\hat{v}^{\prime}}-\mathbf{e^{\prime}})\\
				\end{bmatrix}.
			\end{equation}
			Also,
			\begin{eqnarray*}
				\mathbf{\hat{v}^{\prime}}-\mathbf{e^{\prime}}=
				\frac{6}{n-1}(\underbrace{2,-1,-1,} \cdots, 
				\underbrace{2,-1,-1}).
			\end{eqnarray*} 
			Therefore, (\ref{RE1}) reduces to 
			\begin{eqnarray}\label{RE11}
				XE=	I_n-\frac{1}{n-1}
				\begin{bmatrix}
					0 &\mathbf{0^{\prime}}\\[3pt]
					\mathbf{0}&\cir(\mathbf{\tilde{v}^{\prime}})\\
				\end{bmatrix},
				~~~\text{where}~~~
				\mathbf{\tilde{v}^{\prime}}= 
				(\underbrace{2,-1,-1,} \cdots, \underbrace{2,-1,-1})\in 
				\mathbb{R}^{n-1}.
			\end{eqnarray}
			Let $V=\cir(\mathbf{\tilde{v}^{\prime}})$. Then $V$ is a
			symmetric matrix that readily follows from Lemma \ref{circulant 
				symmetry} together with the fact that $\mathbf{\tilde{v}}$ 
				follows 
			symmetry in its last $n-2$ coordinates.  This gives $XE$ is 
			symmetric. Also, since $X$ and $E$ are 
			symmetric, so is $EX$. To prove $X=E\ssymbol{2}$, 
			we 
			need to show 
			$EXE=E$ and $XEX=X$. 
			
			From 
			(\ref{RE11}), we have
			\begin{eqnarray*}
				EXE=	E-
				\frac{1}{n-1}\begin{bmatrix}
					0 &\mathbf{e^{\prime}}\\[3pt]
					\mathbf{e}&\widetilde{E}
				\end{bmatrix}
				\begin{bmatrix}
					0 &\mathbf{0^{\prime}}\\[3pt]
					\mathbf{0}&V
				\end{bmatrix}.
			\end{eqnarray*}
			Since $\widetilde{E}=\cir(\mathbf{u^{\prime}})$,  we write 
			$\widetilde{E}V=\cir(\mathbf{u^{\prime}}V)$. Therefore,
			\begin{eqnarray*}\label{ERE}
				EXE=	E-
				\frac{1}{n-1}
				\begin{bmatrix}
					0 &\mathbf{e^{\prime}}V\\[3pt]
					\mathbf{0}&\cir(\mathbf{u^{\prime}}V)
				\end{bmatrix}.
			\end{eqnarray*}
			Since $n \equiv 1\Mod 3$, we have
			$\mathbf{e^{\prime}}\mathbf{\tilde{v}}=0$. This implies that
			$\mathbf{e^{\prime}}V=\mathbf{0}^{\prime}$. Also, 
			$\mathbf{u^{\prime}}\mathbf{\tilde{v}}=0$
			and hence $\mathbf{u^{\prime}}V=\mathbf{0}^{\prime}$. Thus, $EXE=E.$
			
			To complete the proof, we claim that $XEX=X$. From (\ref{RE11}), we 
			get
			\begin{eqnarray}\label{RER}
				XEX&=&	X-
				\frac{1}{n-1}
				\begin{bmatrix}
					0 &\mathbf{0}^{\prime}\\[3pt]
					\mathbf{0}&V
				\end{bmatrix}X.
			\end{eqnarray}
			So, consider
			\begin{eqnarray}\label{VR1}
				\begin{bmatrix}
					0 &\mathbf{0}^{\prime}\\[3pt]
					\mathbf{0}&V
				\end{bmatrix}X &=& \frac{-1}{2}
				\begin{bmatrix}
					0 &\mathbf{0}^{\prime}\\[3pt]
					\mathbf{0}&V
				\end{bmatrix}
				\hat{L}+ 	\frac{6}{n-1}
				\begin{bmatrix}
					0 &\mathbf{0}^{\prime}\\[3pt]
					\mathbf{0}&V
				\end{bmatrix}\mathbf{w}\mathbf{w^{\prime}}.
			\end{eqnarray}
			Now,
			\begin{eqnarray*}
				\notag	\begin{bmatrix}
					0 &\mathbf{0}^{\prime}\\[3pt]
					\mathbf{0}&V
				\end{bmatrix}
				\hat{L}&=& \frac{n-1}{3} \begin{bmatrix}
					0 &\mathbf{0}^{\prime}\\[3pt]
					\mathbf{0}&V
				\end{bmatrix}-\frac{1}{3}
				\begin{bmatrix}
					0 &\mathbf{0}^{\prime}\\[3pt]
					V\mathbf{e}&\mathbf{0}\\
				\end{bmatrix}+ \begin{bmatrix}
					0 &\mathbf{0}^{\prime}\\[3pt]
					\mathbf{0}&VP\\
				\end{bmatrix}.
			\end{eqnarray*}
			Since $P$ and $V$ are circulant, we have 
			$VP=PV$  by (\ref{commutative property}). Also, 
			$\mathbf{\tilde{v}^{\prime}}\mathbf{e}=0$ and 
			$V=\cir(\mathbf{\tilde{v}^{\prime}})$, we get
			$V\mathbf{e}=0$. Hence,
			\begin{eqnarray}\label{VL}
				\begin{bmatrix}
					0 &\mathbf{0}^{\prime}\\[3pt]
					\mathbf{0}&V
				\end{bmatrix}
				\hat{L}
				= \begin{bmatrix}
					0 &\mathbf{0}^{\prime}\\[3pt]
					\mathbf{0}&\frac{1}{3}(n-1)V+PV
				\end{bmatrix}=0,
			\end{eqnarray}
			which follows from Lemma \ref{PiV}. Also, 
			\begin{eqnarray}
				\begin{bmatrix}\notag
					0 &\mathbf{0}^{\prime}\\[3pt]
					\mathbf{0}&V
				\end{bmatrix}\mathbf{w}&=&\frac{1}{6} \begin{bmatrix}
					0 &\mathbf{0}^{\prime}\\[3pt]
					\mathbf{0}&V
				\end{bmatrix}
				\begin{bmatrix}
					7-n\\[6pt]
					\mathbf{e}
				\end{bmatrix}=\frac{1}{6}
				\begin{bmatrix}
					0\\[6pt]
					V\mathbf{e}
				\end{bmatrix}=0.
			\end{eqnarray}
			Substituting the above equation and (\ref{VL})  in (\ref{VR1}), it 
			readily 
			follows   that 
			$\left[	\begin{smallmatrix}
				0 &\mathbf{0}^{\prime}\\[3pt]
				\mathbf{0}&V
			\end{smallmatrix}\right] X =0.$
			Using this in (\ref{RER}), we get $XEX=	X$. Thus, $X=E\ssymbol{2}$.
		\end{proof}
		Next, we present some properties of  $\hat{L}$ which are similar to 
		that of $\widetilde{L}$ given in the previous section. We show that 
		$\hat{L}$ 
		is a Laplacian-like matrix and rank($\hat{L})= $ rank($E\ssymbol{2})-1$ 
		which 
		is 
		analogous to the result  rank($\widetilde{L})=$  rank($E^{-1})-1$, 
		proved in 
		Theorem \ref{L hat row sum}.
		\begin{theorem}\label{row sum of L}
			The matrix 	$\hat{L}$, defined in $(\ref{special Laplacian 
			Moore})$, is a 
			Laplacian-like matrix.
		\end{theorem}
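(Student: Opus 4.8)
The plan is to verify the two defining properties of a Laplacian-like matrix directly, namely $\hat{L}\mathbf{e}=0$ and $\mathbf{e}^{\prime}\hat{L}=0$, exactly as was done for $\widetilde{L}$ in the previous section. Since Lemma \ref{L hat symmetric} already guarantees that $\hat{L}$ is symmetric, the second condition is immediate once the first is established: indeed, $\mathbf{e}^{\prime}\hat{L}=(\hat{L}^{\prime}\mathbf{e})^{\prime}=(\hat{L}\mathbf{e})^{\prime}$. Thus the whole proof reduces to showing that $\mathbf{e}$ lies in the null space of $\hat{L}$.

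To carry this out, I would partition the all-ones vector as $\mathbf{e}=(1,\mathbf{e}_{(n-1)\times 1}^{\prime})^{\prime}$ and apply $\hat{L}$ in the block form given in $(\ref{special Laplacian Moore})$, computing the action of each of the three summands separately. The identity block contributes $\tfrac{n-1}{3}\mathbf{e}$; the border block $\left[\begin{smallmatrix}0&\mathbf{e}^{\prime}\\ \mathbf{e}&\mathbf{0}\end{smallmatrix}\right]$ sends $\mathbf{e}$ to the vector whose first coordinate is $\mathbf{e}^{\prime}\mathbf{e}=n-1$ and whose remaining coordinates equal $\mathbf{e}_{(n-1)\times 1}$; and the last block sends $\mathbf{e}$ to $(0,(P\mathbf{e})^{\prime})^{\prime}$. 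Adding these, the first coordinate of $\hat{L}\mathbf{e}$ is $\tfrac{n-1}{3}-\tfrac{1}{3}(n-1)+0=0$, while the remaining $n-1$ coordinates collapse to $\tfrac{n-2}{3}\mathbf{e}_{(n-1)\times 1}+P\mathbf{e}$.

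The decisive step is then to invoke Lemma \ref{Pie}, which gives $P\mathbf{e}=\tfrac{1}{3}(2-n)\mathbf{e}$; substituting this yields $\tfrac{n-2}{3}\mathbf{e}+\tfrac{2-n}{3}\mathbf{e}=0$ for the lower block, so $\hat{L}\mathbf{e}=0$. Combining with symmetry gives $\mathbf{e}^{\prime}\hat{L}=0$ as well, and the conclusion follows. I do not anticipate a genuine obstacle here: the argument is purely a block-matrix computation resting on the row-sum identity for $P$, and it parallels the corresponding proof for $\widetilde{L}$ verbatim. The only point requiring care is keeping the block dimensions straight when multiplying the bordered matrix by $\mathbf{e}$, so that the factor $n-1$ appearing in the first coordinate is correctly cancelled by the identity term.
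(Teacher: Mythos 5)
Your proposal is correct and follows essentially the same route as the paper: both compute $\hat{L}\mathbf{e}$ blockwise from the definition in $(\ref{special Laplacian Moore})$, cancel the lower block via Lemma \ref{Pie} (giving $\tfrac{n-2}{3}\mathbf{e}+\tfrac{2-n}{3}\mathbf{e}=0$), and then invoke the symmetry of $\hat{L}$ from Lemma \ref{L hat symmetric} to obtain $\mathbf{e}^{\prime}\hat{L}=0$. The block bookkeeping you flag as the only delicate point is handled exactly as in the paper, so there is nothing to correct.
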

		\begin{proof}
			We have 
			\begin{eqnarray*}
				\hat{L}\mathbf{e}=\frac{1}{3}(n-1)\mathbf{e}-\frac{1}{3}
				\begin{bmatrix}
					(n-1)\\[3pt]
					\mathbf{e}_{(n-1)\times 1}\\
				\end{bmatrix}+\begin{bmatrix}
					0 \\[3pt]
					P\mathbf{e}_{(n-1)\times 1}\\
				\end{bmatrix}.
			\end{eqnarray*}
			By Lemma \ref{Pie}, 
			$P\mathbf{e}_{(n-1)\times 
			1}=\frac{1}{3}(2-n)\mathbf{e}_{(n-1)\times 1}.$
			Hence,
			$\hat{L}\mathbf{e}=0$. Since $\hat{L}$ is symmetric, the result 
			follows.
		\end{proof}

In Theorem \ref{rank of wn}, we proved that rank($E(W_n)$) is $n-2$ if 
$n\equiv1\Mod3$. Note that rank($E)= $ rank($E\ssymbol{2})$ (see \cite{Moore 
	inverse book},p.39). From (\ref{Formula MPI}), it is clear 
that $E\ssymbol{2}$ is 
expressed as the sum of two matrices, where the rank of the second matrix 
$\frac{6}{n-1}\mathbf{w}\mathbf{w^{\prime}}$ is one. 

We will prove that 
rank($\hat{L}$) is $n-3$. 
First, in the following lemma, we show that rank($\hat{L}$) is less than or 
equal to $n-3$. To prove the result, we recall a property of the
Moore-Penrose inverse, which states that $N(A\ssymbol{2})=N(A^{\prime})$ (see 
\cite{Moore inverse book},p.63).
\begin{lemma}\label{rank of L Moore Inverse}
	The rank of $\hat{L}$ given in $(\ref{special Laplacian Moore})$ is at most 
	$n-3$.
\end{lemma}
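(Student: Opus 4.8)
The plan is to produce three linearly independent vectors annihilated by $\hat{L}$, which forces $\dim N(\hat{L})\geq 3$ and hence $\mathrm{rank}(\hat{L})\leq n-3$. The natural candidates are the all-ones vector $\mathbf{e}$, which already satisfies $\hat{L}\mathbf{e}=0$ by Theorem \ref{row sum of L}, together with the two vectors $\mathbf{x}$ and $\mathbf{y}$ given in $(\ref{rank vectors})$ and $(\ref{rank vectors1})$ that span the null space of $E$ in the case $n\equiv 1\Mod 3$ (Theorem \ref{rank of wn}). I would avoid applying $\hat{L}$ directly to these vectors via its block form in $(\ref{special Laplacian Moore})$, which is messy, and instead route everything through the Moore--Penrose inverse.

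To do this, I first rearrange $(\ref{Formula MPI})$ into $\hat{L}=-2E\ssymbol{2}+\frac{12}{n-1}\mathbf{w}\mathbf{w^{\prime}}$. Since $E$ is symmetric, the recalled property $N(A\ssymbol{2})=N(A^{\prime})$ gives $N(E\ssymbol{2})=N(E)$, so $E\ssymbol{2}\mathbf{x}=E\ssymbol{2}\mathbf{y}=0$ because $E\mathbf{x}=E\mathbf{y}=0$ by Theorem \ref{rank of wn}. Applying $\hat{L}$ to $\mathbf{x}$ then yields $\hat{L}\mathbf{x}=\frac{12}{n-1}(\mathbf{w^{\prime}}\mathbf{x})\mathbf{w}$, and likewise $\hat{L}\mathbf{y}=\frac{12}{n-1}(\mathbf{w^{\prime}}\mathbf{y})\mathbf{w}$. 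The whole argument thus reduces to the two scalars $\mathbf{w^{\prime}}\mathbf{x}$ and $\mathbf{w^{\prime}}\mathbf{y}$. With $\mathbf{w}=\frac{1}{6}(7-n,1,\dots,1)^{\prime}$ and $\mathbf{x}=(0,\underbrace{1,0,-1},\dots,\underbrace{1,0,-1})^{\prime}$, the first coordinate of $\mathbf{x}$ is $0$ and each of its length-three blocks sums to $0$, so $\mathbf{w^{\prime}}\mathbf{x}=0$; the identical reasoning on $\mathbf{y}=(0,\underbrace{0,1,-1},\dots)^{\prime}$ gives $\mathbf{w^{\prime}}\mathbf{y}=0$. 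Consequently $\hat{L}\mathbf{x}=\hat{L}\mathbf{y}=0$, so $\mathbf{e},\mathbf{x},\mathbf{y}\in N(\hat{L})$.

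It remains only to confirm that $\mathbf{e},\mathbf{x},\mathbf{y}$ are linearly independent. Setting $a\mathbf{e}+b\mathbf{x}+c\mathbf{y}=0$, the first coordinate forces $a=0$ (since $\mathbf{x},\mathbf{y}$ start with $0$ while $\mathbf{e}$ starts with $1$), and then the second and third coordinates of $\mathbf{x}$ and $\mathbf{y}$, namely $(1,0)$ and $(0,1)$, force $b=c=0$. Hence $\dim N(\hat{L})\geq 3$ and $\mathrm{rank}(\hat{L})\leq n-3$. I do not anticipate a genuine obstacle: the only delicate point is invoking $N(E\ssymbol{2})=N(E)$ correctly via the symmetry of $E$, which is precisely what lets us reuse the explicit null vectors of $E$ instead of grinding through the structure of $\hat{L}$.
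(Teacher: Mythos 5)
Your proposal is correct and follows essentially the same route as the paper: both identify $\mathbf{e},\mathbf{x},\mathbf{y}$ as null vectors of $\hat{L}$ by combining Theorem \ref{MP formula} with $N(E\ssymbol{2})=N(E)$ (via symmetry of $E$) and the observations $\mathbf{w^{\prime}}\mathbf{x}=\mathbf{w^{\prime}}\mathbf{y}=0$ and $\hat{L}\mathbf{e}=0$, then conclude from linear independence. The only differences are presentational (you solve the formula explicitly for $\hat{L}$ and spell out the inner-product and independence checks), so there is nothing to add.
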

\begin{proof}
	Consider the vectors $\mathbf{x~\text{and}~y}$ given in (\ref{rank 
		vectors}) and (\ref{rank vectors1}) respectively. Then, $E\mathbf{x}=0$ 
		and 
	$E\mathbf{y}=0$.
	We now show that $\mathbf{x~\text{and}~y}$ belong to $N(\hat{L})$. By the 
	result mentioned above, we have $N(E)=N(E\ssymbol{2})$, because $E$ is 
	symmetric. Therefore,
	$E\ssymbol{2}\mathbf{x}=0$ and 
	$E\ssymbol{2}\mathbf{y}=0$. Using 
	Theorem \ref{MP 
		formula}, we 
	get
	\begin{equation*}
		\frac{-1}{2}\hat{L}\mathbf{x}+
		\frac{6}{n-1}\mathbf{w}\mathbf{w^{\prime}}\mathbf{x}=0 ~\text{and}~
		\frac{-1}{2}\hat{L}\mathbf{y}+
		\frac{6}{n-1}\mathbf{w}\mathbf{w^{\prime}}\mathbf{y}=0.
	\end{equation*}
	Note that $\mathbf{w^{\prime}}\mathbf{x}=0$ and 
	$\mathbf{w^{\prime}}\mathbf{y}=0$. So, we have $\hat{L}\mathbf{x}=0$ and 
	$\hat{L}\mathbf{y}=0$. Moreover, 
	$\hat{L}\mathbf{e} = 
	0$ and 
	$\mathbf{e,x}$ and $\mathbf{y}$ 
	are linearly independent. Therefore, the dimension of $N(\hat{L})$ is at 
	least three, and hence the rank of $\hat{L}$ is 
	at most $n-3$.
\end{proof}
In order to show that the rank of $\hat{L}$ is equal to $ n-3$, we find  
matrices $X$ and $C$ of order $n \times (n-3)$ such that $\hat{L}EX=C$ and 
rank($C) = n-3$. To define the matrix $C$, we need the three vectors 
$\mathbf{p,q,r} 
\in \mathbb{R}^{n-3}$ which are given by
\begin{eqnarray*}\label{pqr}
	\mathbf{p^{\prime}}&=&(-1,~\underbrace{-3,0,0},~\underbrace{-3,0,0},
	~\cdots,~\underbrace{-3,0,0}).\\
	\mathbf{q^{\prime}}&=&(-1,~\underbrace{0,-3,0},
	~\underbrace{0,-3,0},\cdots,~\underbrace{0,-3,0}).\\
	\mathbf{r^{\prime}}&=&(-1,~\underbrace{0,0,-3},
	~\underbrace{0,0,-3},\cdots,~\underbrace{0,0,-3}).
\end{eqnarray*}
In the following lemma, we denote the row vector of size $n$ whose entries are 
all one by $\mathbf{e}_{1\times n}$.
\begin{lemma}\label{rank of L Moore}
	Let $S=\cir(\mathbf{s^{\prime}})$, where 
	$\mathbf{s}^{\prime}=(-2,0,0,~\underbrace{-1,0,0},\underbrace{-1,0,0},
	\cdots,\underbrace{-1,0,0})$
	in $\mathbb{R}^{n-4}$. Consider the matrix $\hat{L}$ given in 
	$(\ref{special 
		Laplacian 
		Moore})$. Let $\mathbf{p,q,r}$ be the vectors defined above.
	If
	\begin{eqnarray*}\label{Vl}\notag
		X= 	\frac{1}{2} \begin{bmatrix}
			n-10 &(n-7)\mathbf{e}_{1\times (n-4)}\\[3pt]
			-\mathbf{e}_{(n-4)\times 1}&3S\\[3pt]
			0_{3 \times 1}&0_{3 \times (n-4)}
		\end{bmatrix} ~\text{and}~ 
		C=	 \begin{bmatrix}
			3I_{n-3}\\[1pt]
			\mathbf{p^{\prime}}\\
			\mathbf{q^{\prime}}\\
			\mathbf{r^{\prime}}
		\end{bmatrix},
	\end{eqnarray*}
	then $\hat{L}EX=C$.
\end{lemma}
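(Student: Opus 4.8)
The plan is to establish the identity by direct block multiplication, feeding in the closed form of $\hat{L}E$ obtained in Lemma~\ref{Moore L}. Writing $\hat{L}E=\frac{1}{3}\left[\begin{smallmatrix} 1-n & (7-n)\mathbf{e}^{\prime}\\ \mathbf{e} & \cir(\mathbf{v}^{\prime})\end{smallmatrix}\right]$ and $X=\frac{1}{2}Y$, so that $\hat{L}EX=\frac16(\hat{L}E\cdot 3)Y$, I would partition the columns of $\hat{L}E$ into blocks of sizes $1$, $n-4$, $3$ to match the three row blocks of $Y$ (of sizes $1$, $n-4$, $3$). The decisive simplification is that the last three rows of $X$ vanish; consequently the last three columns of $\cir(\mathbf{v}^{\prime})$, which carry the most irregular entries, never enter the product, and $\hat{L}EX$ is determined solely by the first $1+(n-4)$ columns of $\hat{L}E$ acting on the nonzero rows of $X$.

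The first row of the product is quick to dispose of. Its leading entry is $\frac{1}{6}\big[(1-n)(n-10)+(7-n)\mathbf{e}^{\prime}(-\mathbf{e})\big]$, and a short expansion gives $(1-n)(n-10)+(n-7)(n-4)=18$, so the entry equals $3$, matching the top-left entry of $C$. For the rest of the first row I would use that $S=\cir(\mathbf{s}^{\prime})$ has constant column sums $\mathbf{e}^{\prime}S=\frac{1-n}{3}\mathbf{e}^{\prime}$ (since $\mathbf{e}^{\prime}\mathbf{s}=\frac{1-n}{3}$); then $(1-n)(n-7)\mathbf{e}^{\prime}+3(7-n)\,\mathbf{e}^{\prime}S=(1-n)(n-7)\mathbf{e}^{\prime}+(7-n)(1-n)\mathbf{e}^{\prime}=\mathbf{0}^{\prime}$, in agreement with the first row of $C$ being $(3,0,\dots,0)$.

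The substance of the proof is the lower $(n-1)\times(n-3)$ block. For its first column I would evaluate $(n-10)\mathbf{e}-\widetilde{V}\mathbf{e}$, where $\widetilde{V}$ is the submatrix formed by the first $n-4$ columns of $\cir(\mathbf{v}^{\prime})$; using $\mathbf{e}^{\prime}\mathbf{v}=n-7$ (so the full column sum of $\cir(\mathbf{v}^{\prime})$ is $(n-7)\mathbf{e}$) this reduces to understanding the last three columns of $\cir(\mathbf{v}^{\prime})$, and should produce the vector $(0,\dots,0,-6,-6,-6)^{\prime}$, whose $\frac16$-scaling is exactly the lower part of the first column of $C$. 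For the remaining $n-4$ columns I would compute $(n-7)J+3\widetilde{V}S$; writing $\widetilde{V}S=\cir(\mathbf{v}^{\prime})\left[\begin{smallmatrix} S\\ 0\end{smallmatrix}\right]$ and exploiting that both $\mathbf{v}$ and $\mathbf{s}$ are period-$3$ patterned (compatible with $n-1\equiv0\Mod3$), the product columns become periodic and, after scaling, yield exactly $3$ times the rows of $I_{n-3}$ on the interior rows, while the truncation forces the three boundary rows to be $\mathbf{p}^{\prime},\mathbf{q}^{\prime},\mathbf{r}^{\prime}$.

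The main obstacle will be precisely this bottom-right block: because $\widetilde{V}$ is a truncated rather than a full circulant, the clean product rule $(\ref{product of circulant property})$ does not apply directly, and one must track the residue-mod-$3$ pattern of $\cir(\mathbf{v}^{\prime})$ against the sparse columns of $S$ essentially coordinate-by-coordinate, with special attention to the "wrap-around" at the last three indices that converts the otherwise-uniform $3I_{n-3}$ pattern into the exceptional rows $\mathbf{p},\mathbf{q},\mathbf{r}$. I would organize this computation by splitting the interior indices into residue classes modulo $3$, in the spirit of Lemma~\ref{Image of a vector which has repeated components}, and then verifying the three boundary columns separately, thereby confirming that the lower block equals $\left[\begin{smallmatrix} 3I_{n-3}\text{ (rows }2,\dots,n-3)\\ \mathbf{p}^{\prime}\\ \mathbf{q}^{\prime}\\ \mathbf{r}^{\prime}\end{smallmatrix}\right]$ and hence that $\hat{L}EX=C$.
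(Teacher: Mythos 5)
Your overall strategy (block-multiplying the computed $\hat{L}E$ against $X$, exploiting that the last three rows of $X$ vanish) is the same as the paper's, and your first-row computation and your claimed value for the first column are both correct. The genuine gap is in your plan for the decisive bottom-right block, which rests on a false premise: you assert that $\mathbf{v}$ is ``period-$3$ patterned'' and that the ``most irregular entries'' of $\cir(\mathbf{v^{\prime}})$ sit in its last three columns and therefore never enter the product. Neither holds. By Lemma \ref{Moore L} the exceptional coordinate of $\mathbf{v}$ is its \emph{first} one, $\frac{17-5n}{n-1}=\frac{n+11}{n-1}-6$, and in the circulant $\cir(\mathbf{v^{\prime}})$ this entry lies on the main diagonal, hence in every one of the $n-4$ columns you retain; equivalently $\cir(\mathbf{v^{\prime}})=\cir(\mathbf{\hat{v}^{\prime}})-6I_{n-1}$, where $\mathbf{\hat{v}}$ is the genuinely periodic vector of $(\ref{LE4})$. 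This is not a cosmetic slip: the $-6I_{n-1}$ deviation is exactly what creates the block $3I_{n-3}$ of $C$. If you run your residue-class computation treating the retained block of $\cir(\mathbf{v^{\prime}})$ as a (truncated) periodic circulant, the interior rows of the bottom-right block come out as $\cir(3\mathbf{e_1^{\prime}}+3\mathbf{s^{\prime}})=3I_{n-4}+3S$, not $3I_{n-4}$; it is the diagonal $-6$'s, multiplied against the block $3S$ of $X$, that contribute the $-3S$ cancelling the spurious $3S$ (compare $(\ref{LEX 2})$). Indeed, an identity block is not period-$3$ periodic, so it cannot emerge from a purely periodic computation --- that alone should have been a warning sign. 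Your attribution at the boundary is backwards for the same reason: the rows $\mathbf{p^{\prime}},\mathbf{q^{\prime}},\mathbf{r^{\prime}}$ are not produced by ``truncation/wrap-around''; they appear because the last three rows of $\cir(\mathbf{v^{\prime}})$ carry \emph{no} diagonal correction, so there one sees the purely periodic contribution $\frac{1}{6}\big((n-7)J_{3\times(n-4)}+(1-n)R^{\prime}\big)$, which is what $(\ref{LEX 3})$ evaluates.

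The repair is essentially the paper's route: write $\hat{L}E$ in the form $(\ref{LE4})$, i.e.\ $\frac{1}{3}$ times a block matrix built from the periodic $\cir(\mathbf{\hat{v}^{\prime}})$, minus $2I_n$. Since $\mathbf{\hat{v}}$ has period $3$ and $n-4\equiv 0\Mod{3}$, the leading $(n-4)\times(n-4)$ principal submatrix of $\cir(\mathbf{\hat{v}^{\prime}})$ is itself a genuine circulant $\cir(\mathbf{\bar{v_1}^{\prime}})$, so $(\ref{product of circulant property})$ and Lemma \ref{Image of a vector which has repeated components} apply cleanly and give $\cir(\mathbf{\bar{v_1}^{\prime}})S=\frac{1-n}{3}\cir(\mathbf{\bar{v_1}^{\prime}})$ and $R^{\prime}S=\frac{1-n}{3}R^{\prime}$; the separated term $-2I_nX=-2X$ then supplies the missing corrections (in particular the $-3S$), and the blocks assemble to $C$. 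Alternatively, your coordinate-by-coordinate plan can be salvaged, but only if you explicitly track the diagonal entries $\frac{17-5n}{n-1}$ --- the very entries you claim never enter the product.
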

\begin{proof}
	Recall from (\ref{LE4}), 
	\begin{eqnarray*}
		\hat{L}E&=&\frac{1}{3} 
		\begin{bmatrix}
			7-n &(7-n)\mathbf{e^{\prime}}\\[6pt]
			\mathbf{e}&\cir(\mathbf{\hat{v}^{\prime}})\\
		\end{bmatrix}-2I_n,
	\end{eqnarray*}
	where $\mathbf{\hat{v}^{\prime}}=\frac{1}{n-1} 
	\big(\underbrace{n+11,n-7,n-7},
	\cdots,\underbrace{n+11,n-7,n-7}\big) \in \mathbb{R}^{n-1}.$ To 
	multiply $\hat{L}E$ with 
	$X$, we partition $\hat{L}E$ accordingly. 
	That is,
	\begin{eqnarray*}
		\hat{L}E&=&\frac{1}{3}
		\begin{bmatrix}
			7-n &(7-n) \mathbf{e}_{1 \times (n-4)} & (7-n) \mathbf{e}_{1 
				\times3}\\[6pt]
			\mathbf{e}_{(n-4) \times 1}&\cir(\mathbf{\bar{v_1}^{\prime}}) & 
			R\\[6pt]
			\mathbf{e}_{3 \times 1} & R^{\prime} & Q
		\end{bmatrix}-2
		\begin{bmatrix}
			1&\mathbf{{0}^{\prime}}&\mathbf{{0}^{\prime}}\\[6pt]
			\mathbf{{0}}&I_{n-4}&0_{(n-4)\times 3}\\[6pt]
			\mathbf{{0}}&0_{3\times (n-4)}&I_3
		\end{bmatrix},
	\end{eqnarray*}
	where 
	\begin{eqnarray*}
		\mathbf{\bar{v_1}^{\prime}}=\frac{1}{n-1}\big(\underbrace{n+11,n-7,n-7},
		\cdots,\underbrace{n+11,n-7,n-7}\big) \in \mathbb{R}^{n-4},
	\end{eqnarray*}
	\begin{eqnarray*}
		R = 
		\begin{bmatrix}
			\mathbf{\bar{v_1}}& T(\mathbf{\bar{v_1}}) &
			T^2(\mathbf{\bar{v_1}})
		\end{bmatrix}, ~\text{and}~ Q= \frac{1}{n-1}
		\begin{bmatrix}
			n+11&n-7&n-7\\
			n-7&n+11&n-7\\
			n-7&n-7&n+11
		\end{bmatrix}.
	\end{eqnarray*}
	Then, 
	\begin{eqnarray}\label{LEX}
		\hat{L}EX&=&\frac{1}{6}
		\begin{bmatrix}
			-6(7-n) &A_{12}\\[6pt]
			A_{21} & A_{22}\\[6pt]
			A_{31} & A_{32}
		\end{bmatrix}-\begin{bmatrix}
			n-10 &(n-7)\mathbf{e}_{1\times (n-4)}\\[3pt]
			-\mathbf{e}_{(n-4)\times 1}&3S\\[3pt]
			0_{3 \times 1}&0_{3 \times (n-4)}
		\end{bmatrix},
	\end{eqnarray}
	where
	\begin{eqnarray*}
		A_{12}&=&-(7-n)^2 \mathbf{e}_{1 \times (n-4)}+
		3(7-n) \mathbf{e}_{1 \times (n-4)}S,\\[6pt]
		A_{21}&=& (n-10)\mathbf{e}_{(n-4) \times 1}-
		\cir(\mathbf{\bar{v_1}^{\prime}})\mathbf{e}_{(n-4) \times 1}, \\[6pt]
		A_{22}&=& (n-7)J_{n-4}+\cir(\mathbf{\bar{v_1}^{\prime}})S,\\[6pt]
		A_{31}&=& (n-10)\mathbf{e}_{3 \times 1}- R^{\prime}\mathbf{e}_{(n-4) 
		\times 
			1},\\[6pt]
		A_{32}&=&(n-7)J_{3 \times (n-4)}+3R^{\prime}S.
	\end{eqnarray*}
	As $S$ is a circulant matrix defined by the vector $\mathbf{s}$, we have 
	$\mathbf{e}_{1 \times (n-4)}S=\alpha \mathbf{e}_{1 \times (n-4)}$, 
	where $\alpha$ is the sum of the coordinates of $\mathbf{s}$. Since the 
	non-zero coordinates of $\mathbf{s}$ are $-2$, which occurs at exactly one 
	place, and $-1$, which occurs in  $\big((\frac{n-4}{3})-1\big)$ places,
	\begin{eqnarray*}
		\alpha&=&\mathbf{e}_{1 \times (n-4)}\mathbf{s^{\prime}}= 
		-2+(-1)\Big(\frac{n-7}{3}\Big)
		= \frac{1-n}{3}.
	\end{eqnarray*}
	Therefore,
	\begin{eqnarray}\label{a12}
		A_{12}&=& \big(-(7-n)^2 +(7-n)(1-n)\big) \mathbf{e}_{1 \times (n-4)}
		= 6(n-7) \mathbf{e}_{1 \times (n-4)}.
	\end{eqnarray}
	Now, 
	\begin{equation*}\label{V1E}
		\mathbf{\bar{v_1}^{\prime}}\mathbf{e}_{(n-4) \times 1}
		=\frac{1}{n-1}\bigg(\big[(n+11)+2(n-7)\big]
		\Big(\frac{n-4}{3}\Big)\bigg)=n-4,
	\end{equation*}
	and hence	$\cir\big(\mathbf{\bar{v_1}^{\prime}}\big)\mathbf{e}_{(n-4) 
		\times 
		1}= 
	(n-4)\mathbf{e}_{(n-4) \times 1}$. So, 
	\begin{equation}\label{a21}
		A_{21}=\big( (n-10)-(n-4)\big) \mathbf{e}_{(n-4) \times 1}
		=-6\mathbf{e}_{(n-4) \times 1}.
	\end{equation}
	We have
	\begin{eqnarray}\label{V1S}
		\mathbf{\bar{v_1}^{\prime}}\mathbf{s}
		&=&\frac{1}{n-1}\bigg((n+11)(-2)+(n+11)(-1)\Big(\frac{n-7}{3}\Big)\bigg)
		=\frac{-1}{3}(n+11),
	\end{eqnarray}
	and
	\begin{equation}\label{V2S}
		\big(T(\mathbf{\bar{v_1}})\big)^{\prime}\mathbf{s}
		=\frac{1}{n-1}\bigg((n-7)(-2)+(n-7)(-1)\Big(\frac{n-7}{3}\Big)\bigg)
		=\frac{-1}{3}(n-7)=\big( 
		T^2(\mathbf{\bar{v_1}})\big)^{\prime}\mathbf{s}.
	\end{equation}
	We need to compute the vector $\mathbf{\bar{v_1}^{\prime}}S$.
	Note that $n-4\equiv 0 \Mod3$. 
	Using  Lemma \ref{Image of a vector which has repeated components} to 
	the matrix  $S=\cir(\mathbf{s}^{\prime})$ and the vector  
	$\mathbf{\bar{v_1}}$, 
	it is enough to compute the first 
	three coordinates of $\mathbf{\bar{v_1}^{\prime}}S$. From (\ref{V1S}) 
	and (\ref{V2S}), we have
	\begin{eqnarray}\label{v1S}
		\mathbf{\bar{v_1}^{\prime}}S 
		&=&\frac{-1}{3}(\underbrace{n+11,n-7,n-7},\cdots,\underbrace{n+11,n-7,n-7})
		= \Big(\frac{1-n}{3}\Big)\mathbf{\bar{v_1}^{\prime}}.
	\end{eqnarray}	
	Similarly, we have 
	\begin{eqnarray}\label{v2s}
		\big(T^i(\mathbf{\bar{v_1}})\big)^{\prime}S
		&=&\Big(\frac{1-n}{3}\Big) \big(T^i(\mathbf{\bar{v_1}})\big)^{\prime}, 
		~\text{for}~ 
		1\leq i \leq 
		2.
	\end{eqnarray}	
	Therefore,
		$	\cir(\mathbf{\bar{v_1}^{\prime}})S= \cir 
		(\mathbf{\bar{v_1}^{\prime}}S)=\frac{1}{3}(1-n) 
		\cir(\mathbf{\bar{v_1}^{\prime}}),$
		and hence
		\begin{equation}\label{a22}
			A_{22}=
			(n-7)J_{n-4}+(1-n)\cir(\mathbf{\bar{v_1}^{\prime}}).
		\end{equation}
		Note that the sum of the entries of $\mathbf{\bar{v_1}}, 
		T(\mathbf{\bar{v_1}})$ and 
		$T^2(\mathbf{\bar{v_1}})$ are the same and equal to $(n-4)$. Therefore, 
		we 
		have
		$R^{\prime}\mathbf{e}_{(n-4)\times 1}=\begin{bmatrix}
			\mathbf{\bar{v_1}}&
			T(\mathbf{\bar{v_1}})&
			T^2(\mathbf{\bar{v_1}})
		\end{bmatrix}^{\prime}\mathbf{e}_{(n-4)\times 
		1}=(n-4)\mathbf{e}_{3\times 1}, ~ 
		\text{and}$
		\begin{eqnarray}\label{a31}
			A_{31}&=& (n-10)\mathbf{e}_{3 \times 1}- 
			R^{\prime}\mathbf{e}_{(n-4) \times 1}= 
			-6\mathbf{e}_{3\times 1}.
		\end{eqnarray}
		By (\ref{v1S}) and (\ref{v2s}), we have 
		\begin{eqnarray*}	\notag
			R^{\prime}S =  {\begin{bmatrix}
					\big(	\mathbf{\bar{v_1}^{\prime}}S\big)^{\prime}&
					\Big(	
					\big(T(\mathbf{\bar{v_1}})\big)^{\prime}S\Big)^{\prime}&
					\Big(	
					\big(T^2(\mathbf{\bar{v_1}})\big)^{\prime}S\Big)^{\prime}
				\end{bmatrix}^{\prime}=\Big(\frac{1-n}{3}\Big)R^{\prime}}.
		\end{eqnarray*}
		Consequently,
		\begin{eqnarray}\label{a32}
			A_{32}&=&(n-7)J_{3 \times (n-4)}+3R^{\prime}S= (n-7)J_{3 \times 
				n-4}+(1-n)R^{\prime}.
		\end{eqnarray}
		Substituting (\ref{a12}), (\ref{a21}), (\ref{a22}), (\ref{a31}) and 
		(\ref{a32}) 
		in (\ref{LEX}), we get
		\begin{eqnarray}\label{LEX 1}
			\hat{L}E X
			&=&  \begin{bmatrix}
				3 & 0_{1\times (n-4)}\\[6pt]
				0_{(n-4)\times 1}&
				\frac{1}{6}\Big((n-7)J_{n-4}+(1-n)\cir(\mathbf{\bar{v_1}^{\prime}})\Big)-3S\\[6pt]
				-\mathbf{e}_{3 \times 
					1}&\frac{1}{6}\Big((n-7)J_{3 \times 
					(n-4)}+(1-n)R^{\prime})\Big)
			\end{bmatrix}.
		\end{eqnarray}
		Note that
		\begin{align*}
			\frac{1}{6}\Big((n-7)\mathbf{e}_{1\times 
				(n-4)}+(1-n)\mathbf{\bar{v_1}^{\prime}}\Big)&=
			(\underbrace{-3,0,0},\underbrace{-3,0,0},\cdots,
			\underbrace{-3,0,0}),\\
			\frac{1}{6}\Big((n-7)\mathbf{e}_{1\times 
				(n-4)}+(1-n)\big(T(\mathbf{\bar{v_1}})\big)^{\prime}\Big)
			&=				(\underbrace{0,-3,0},\underbrace{0,-3,0},\cdots,
			\underbrace{0,-3,0}),
			\intertext{and}
			\frac{1}{6}\Big((n-7)\mathbf{e}_{1\times 
				(n-4)}+(1-n)\big(T^2(\mathbf{\bar{v_1}})\big)^{\prime}\Big)
			&= 	(\underbrace{0,0,-3},\underbrace{0,0,-3},
			\cdots,\underbrace{0,0,-3}).
		\end{align*}
		Hence,						
		\begin{align}\label{LEX 	2}				
			\frac{1}{6}\Big((n-7)\mathbf{e}_{1\times 
				(n-4)}+(1-n)\mathbf{\bar{v_1}^{\prime}}\Big)-3\mathbf{s^{\prime}}
			&= 3\mathbf{e_1^{\prime}},
			\intertext{and}
			\begin{bmatrix}
				-\mathbf{e}_{3 \times 
					1} & \frac{1}{6}\Big((n-7)J_{3 \times 
					n-4}+(1-n)R^{\prime})\Big)
			\end{bmatrix}
			&=\label{LEX 3}
			\begin{bmatrix}
				\mathbf{p}&
				\mathbf{q}&
				\mathbf{r}
			\end{bmatrix}^{\prime}.	
		\end{align}
		Substituting (\ref{LEX 2}) and   (\ref{LEX 3}) in (\ref{LEX 1}), it 
		follows that 
		$\hat{L}E X=C$.       
	\end{proof}
	\begin{theorem}\label{rank of L hat}
		Let $\hat{L}$ be the matrix given in $(\ref{special Laplacian Moore})$. 
		Then 
		the rank of $\hat{L}$ is $n-3$.
	\end{theorem}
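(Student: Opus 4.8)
The plan is to combine the two halves of the rank computation that the preceding lemmas have already prepared. Lemma \ref{rank of L Moore Inverse} gives the upper bound $\mathrm{rank}(\hat{L}) \le n-3$, so all that remains is to establish the matching lower bound $\mathrm{rank}(\hat{L}) \ge n-3$. The natural tool for the lower bound is the identity $\hat{L}EX = C$ furnished by Lemma \ref{rank of L Moore}, since the right-hand side $C$ has a transparent rank.

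First I would read off $\mathrm{rank}(C)$ directly from its block form
\begin{equation*}
	C = \begin{bmatrix}
		3I_{n-3}\\[1pt]
		\mathbf{p^{\prime}}\\
		\mathbf{q^{\prime}}\\
		\mathbf{r^{\prime}}
	\end{bmatrix}.
\end{equation*}
The leading $(n-3)\times(n-3)$ block $3I_{n-3}$ is nonsingular, so the first $n-3$ rows of $C$ are linearly independent; since $C$ has exactly $n-3$ columns, this forces $\mathrm{rank}(C)=n-3$. Next I would use the elementary fact that multiplying a matrix on the right cannot increase its rank, together with $\hat{L}EX = C$, to get
\begin{equation*}
	n-3 = \mathrm{rank}(C) = \mathrm{rank}(\hat{L}EX) \le \mathrm{rank}(\hat{L}E) \le \mathrm{rank}(\hat{L}).
\end{equation*}
This yields the desired lower bound.

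Finally I would conclude by combining the two inequalities: from Lemma \ref{rank of L Moore Inverse} we have $\mathrm{rank}(\hat{L}) \le n-3$, and from the argument above $\mathrm{rank}(\hat{L}) \ge n-3$, hence $\mathrm{rank}(\hat{L}) = n-3$. I do not expect any genuine obstacle at this stage, because the substantive work has already been absorbed into Lemma \ref{rank of L Moore}, whose construction of the explicit matrices $X$ and $C$ with $\hat{L}EX = C$ is precisely what makes the lower bound immediate. The only point requiring a moment's care is justifying $\mathrm{rank}(C)=n-3$ from the identity block rather than attempting to analyze the appended rows $\mathbf{p}',\mathbf{q}',\mathbf{r}'$, which play no role in the rank once the $3I_{n-3}$ block is present.
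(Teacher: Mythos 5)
Your proposal is correct and follows essentially the same route as the paper: the upper bound comes from Lemma \ref{rank of L Moore Inverse}, and the lower bound comes from the identity $\hat{L}EX=C$ of Lemma \ref{rank of L Moore} together with $\mathrm{rank}(\hat{L})\geq\mathrm{rank}(\hat{L}EX)=\mathrm{rank}(C)=n-3$. The only difference is that you spell out why $\mathrm{rank}(C)=n-3$ (the $3I_{n-3}$ block) and the product-rank inequality chain, details the paper leaves implicit.
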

	\begin{proof}
		From Lemma \ref{rank of L Moore}, we have 
		$	\text{rank}(\hat{L})\geq \text{rank}(C) = n-3.$
		The reverse inequality follows from Lemma \ref{rank of L Moore Inverse}.
	\end{proof}
	\subsection*{Concluding remarks}
	In this article, we studied the eccentricity matrix $E(W_n)$ of the wheel 
	graph 
	$W_n$. We computed the determinant  and the  inertia of $E(W_n)$. For the 
	non-singular case, we established a formula for the inverse of $E(W_n)$ 
	which 
	was expressed as the sum of a symmetric Laplacian-like matrix and a rank 
	one 
	matrix. A similar result was proved for the Moore-Penrose inverse of 
	$E(W_n)$ 
	for the singular case. It would be interesting to study similar 
	inverse formulae for the eccentricity matrices of other graphs.

\end{document}